\documentclass{article}
\usepackage{amssymb,amsmath}
\usepackage{xcolor}
\usepackage{amsthm}
\usepackage[colorlinks=true]{hyperref}
\usepackage{pdfsync}

\newcommand{\ltwoR}{{\ell^2 ({\mathbb R})}}
\newcommand{\ltwoC}{{\ell^2 ({\mathbb C})}}
\newtheorem{theorem}{Theorem}[section]
\newtheorem{lemma}{Lemma}[section]

\newtheorem{corollary}{Corollary}[section]
\newtheorem{proposition}{Proposition}[section]
\theoremstyle{definition}

\theoremstyle{remark}
\newtheorem{remark}{Remark}
\newtheorem{problem}{Problem}%
\newcommand{\Renu}{{\rm Re}\,\nu_k}
\newcommand{\Imnu}{{\rm Im}\,\nu_k}

\newcommand{\norm}[1]{\left\lVert #1\right\rVert}

\renewcommand{\Re}{\mathrm{Re}\,}

\begin{document}

\title{Polynomial Turnpike Property for a Class of Infinite-Dimensional Oscillating Systems}
\date{}

\author{Alexander Zuyev\footnote{Max Planck Institute for Dynamics of Complex Technical Systems, 39106 Magdeburg, Germany}\footnote{Institute of Applied Mathematics and Mechanics, National Academy of Sciences of Ukraine, e-mail: zuyev@mpi-magdeburg.mpg.de.}
\and
Emmanuel Tr\'elat \footnote{Sorbonne Université, Université Paris Cité, CNRS, Inria, Laboratoire Jacques-Louis Lions, LJLL, F-75005 Paris, France, e-mail: emmanuel.trelat@sorbonne-universite.fr.}}

\maketitle

\begin{abstract}
We establish a polynomial turnpike estimate for an optimal control problem consisting of a system of infinitely many controlled oscillators, considered as an
abstract differential equation in a Hilbert space, with a quadratic cost.
Our proof relies on spectral considerations and on the construction of a Riesz basis.
A concrete example is given,
which involves a rotating body-beam system.
To our knowledge, this is the first example of a pointwise turnpike estimate around a steady-state that is polynomial but not exponential.
\end{abstract}

{\bf Keywords:} turnpike property, optimal control, hyperbolic system, polynomial decay, Riesz basis.

\section{Introduction}
It is well-known that many important classes of infinite-dimensional control systems, which are strongly stabilizable by static feedback laws, do not possess any exponential decay estimate
%(see, e.g.,~\cite{zhang2004polynomial,liu2005characterization,anantharaman2014sharp,sklyar2021polynomial}).
{(see, e.g.,~\cite{zhang2004polynomial, liu2005characterization, anantharaman2014sharp, sklyar2021polynomial, batty2008nonuniform, borichev2010optimal, batty2016fine}).}
This feature differs essentially from the theory of finite-dimensional autonomous systems, where non-exponential (but polynomial) asymptotic stability is a purely nonlinear phenomenon in the case of a neutral linear approximation (see, e.g.,~\cite{grushkovskaya2013asymptotic} and references therein).
The goal of this paper is to characterize the large-time behavior of a class of undamped hyperbolic systems under Riccati-based controllers and compare solutions to static and dynamic optimization problems in the context of the turnpike property.

The concept of turnpike, originated from the work of von Neumann~\cite{neumann1945model} and developed in~\cite{dorfman1958linear} as a part of capital accumulation theory, has a fundamental influence on modern control theory (see, e.g.,~\cite{zaslavski2005turnpike, trelat2018steady, faulwasser2022turnpike} and references therein).
{
Over the last decade, several complementary approaches to turnpike have been developed, including strict dissipativity and economic model predictive control viewpoints \cite{grune2016relation, faulwasser2017economic, faulwasser2017turnpike}, sensitivity-based analyses for PDE-constrained linear-quadratic problems \cite{grune2019sensitivity, grune2020sensitivity}, hyperbolic boundary control settings \cite{gugat2019hyperbolic}, and semilinear PDE control problems \cite{pighin2021semilinear}.
We also mention geometric viewpoints for nonlinear optimal control \cite{sakamoto2021geometric}, shape optimization settings \cite{lance2020shape}, and recent extensions on large-time intervals and time-dependent structures \cite{askovic2024value, zamorano2025almost}.
For broader perspectives, we refer to the recent surveys \cite{geshkovski2022turnpike, trelat2025survey}.
}
In the recent paper~\cite{han2022slow}, an \emph{integral} turnpike estimate was proposed for a linear hyperbolic system with quadratic cost under weak controllability and observability assumptions.
{To the best of our knowledge, we are not aware of a pointwise polynomial turnpike property around a steady-state in the existing literature.}%
%has never been obtained.
{Note that a pointwise non-exponential, called ``linear turnpike'' property, has been established in~\cite{trelat2023linear}, but there, the turnpike set is not a steady-state: it is a nonsteady
trajectory and this is because of the non-steady feature that the exponential estimate is lost. In the present paper, the turnpike set is a singleton.}
A major difficulty is related to the Riesz basis property of the associated infinitesimal generator.

In this paper, we establish pointwise polynomial turnpike estimates for a class of undamped hyperbolic problems with one-dimensional control.

A distinctive feature of our main result is that the turnpike estimate is obtained in a weaker weighted topology $V_{0,-\beta-1}$.
This weakening is intrinsic to the polynomial decay mechanism {in the case of zero spectral abscissa}:
the available mode-by-mode decay estimates naturally propagate through the Hamiltonian variational dynamics only in negatively weighted norms.

The polynomial nature of the turnpike estimate is a direct consequence of a vanishing spectral {abscissa} at high frequencies: the real parts of the relevant eigenvalues satisfy $\Re\sigma_k^\pm\sim\vert b_k\vert\to0$ as $k\to+\infty$.
In this regime, the modal decay $e^{-\Re\nu_k t}$ can be converted into a uniform polynomial bound only at the price of a mode-dependent prefactor, for instance $e^{-\vert b_k\vert t}\leq C_\beta\vert b_k\vert^{-\beta}(t+1)^{-\beta}$ for any $\beta>0$.
This loss is precisely compensated by working in the weak weighted topology $V_{0,-\beta-1}$ (equivalently, in the weighted sequence norm $H^b_{-(\beta+1)}$ introduced later on), which is therefore the natural functional setting for a pointwise polynomial turnpike statement.
In particular, on any fixed finite-dimensional spectral truncation, the same estimate yields a polynomial turnpike in the strong norm, with constants independent of the horizon.

Besides Assumptions \eqref{A1}--\eqref{A2} introduced hereafter, we use the gap-summability condition \eqref{A3} and the weighted Bari-type condition \eqref{A5}.
If \eqref{A5} holds for some $\rho>1$, then the polynomial turnpike exponent is $\beta=\rho-1$.
More precisely, for the free terminal point linear-quadratic problem, the optimal deviation from the steady optimal quadruple is ${\mathcal O}\big((t+1)^{-\beta}+(T-t+1)^{-\beta}\big)$ in $V_{0,-\beta-1}$, uniformly with respect to the {time} horizon $T$.

The article is structured as follows.
Section~\ref{sec_modal} considers static and dynamic optimization problems with quadratic costs for a linear infinite-dimensional system with a single input. The main result, presented in Theorem~\ref{thm_turnpike_coordinates}, is a polynomial turnpike estimate for the variational system. The proof of this estimate relies on the spectral theory of linear operators and auxiliary results, which are summarized in Section~\ref{sec_spectrum}.
In Section~\ref{sec_Riesz}, we construct a Riesz basis using linear combinations of the eigenvectors of the infinitesimal generator for the variational system.
This construction is applied in the proof of Proposition~\ref{thm_turnpike}, presented in Section~\ref{sec_proof}.
The obtained analytical results are illustrated with an example of a rotating flexible beam model in Section~\ref{sec_beam}.

\section{Oscillating system with modal coordinates}\label{sec_modal}
Consider an infinite system of controlled oscillators
\begin{equation}
\begin{aligned}
&\dot \xi_ k(t) = \omega_k \eta_k(t),\\
&\dot \eta_k(t) = -\omega_k \xi_k(t) + b_k u(t),\quad k\in {\mathbb N},
\end{aligned}
\label{cs_coord}
\end{equation}
where the column vectors $\xi=(\xi_1,\xi_2, ...)^\top\in\ltwoR$ and $\eta=(\eta_1,\eta_2, ...)^\top\in\ltwoR$ play the role of generalized coordinates and momenta, respectively, and the real Hilbert space $\ltwoR$ is equipped with the inner product
$
\left< \xi,\eta \right>_\ltwoR  = \sum_{k=1}^\infty \xi_k \eta_k
$.
Throughout the text, the set of positive integer numbers is denoted by $\mathbb N=\{1,2,...\}$.
The control system~\eqref{cs_coord} is considered as a mathematical model for vibrating flexible structures, such as strings, beams, and plates (see, e.g.,~\cite[Chap.~3]{zuyev2015partial}).
We assume that the real parameters $b_k$ and $\omega_k$ satisfy:
\begin{equation}\label{A1}
b=(b_1,b_2,...)^\top \in \ltwoR\;\text{and} \; b_k\neq 0\;\text{for all}\; k\in{\mathbb N},
\tag{$A1$}
\end{equation}
\begin{equation}\label{A2}
\omega_1>0,\; \omega_{k+1} - \omega_{k}\geq \omega_*>0\quad \text{for all}\;\; k\in{\mathbb N}.
\tag{$A2$}
\end{equation}
For some of the technical results presented below, we will also consider some of the following assumptions:
{
\begin{equation}\label{A3}
S_k = \sum_{j\neq k} \frac{1}{(\omega_k-\omega_j)^2}\leq {\bar C}<\infty\quad\text{for all}\;k\in{\mathbb N},
\tag{$A3$}
\end{equation}}
{
\begin{equation}\label{A4}
\sum_{k=1}^\infty \sum_{j\neq k} \frac{b_j^2}{(\omega_j-\omega_k)^2} < \infty,
\tag{$A4$}
\end{equation}}
{
\begin{equation}\label{A5}
\sum_{k\geq1}\frac{1}{|b_k|^{2\rho}}
\sum_{j\neq k}\frac{|b_j|^{2(1+\rho)}}{(\omega_j-\omega_k)^2}<\infty\quad\text{for some}\; \rho>1,
\tag{$A5$}
\end{equation}}
{
\begin{equation}\label{A6}
\liminf_{k\to \infty}\left(\vert b_k \vert \omega_k^\alpha \right) >0\quad\text{for some}\; \alpha>0.
\tag{$A6$}
\end{equation}}
It is clear that~\eqref{A3} together with \eqref{A1} implies~\eqref{A4}.

\begin{remark}\label{rem_paramasymptotics}
If we assume that $\omega_k\asymp k^p$ with $p\geq 1$ and $|b_k|\asymp \omega_k^{-\alpha}$ for some $\alpha>0$,
then condition~\eqref{A6} holds.
Moreover, a standard near/far decomposition together with the gap condition \eqref{A2} shows that \eqref{A5} holds provided
\begin{equation}\label{alpha_rho_p}
2\alpha\rho<2-\frac1p.
\end{equation}
In Section~\ref{sec_beam}, we will present a rotating beam example with $p=2$ and $\alpha=\tfrac12$, so that \eqref{A5} holds for such a example for all $\rho<\tfrac32$.
\end{remark}

The system~\eqref{cs_coord} is written in the operator form as
\begin{equation}
\dot x(t) = Ax(t) + Bu(t),\;\; x(t)\in H = \ltwoR \times \ltwoR,\; u(t)\in \mathbb R,
\label{cs_op}
\end{equation}
where
$$
\begin{aligned}
&x= \begin{pmatrix}\xi \\ \eta\end{pmatrix},\; A= \begin{pmatrix}0 & \Omega \\ -\Omega & 0 \end{pmatrix},\; B= \begin{pmatrix}0 \\ b\end{pmatrix},\\
& \Omega = {\rm diag}(\omega_1,\omega_2,...).
\end{aligned}
$$
We consider the interpolation spaces {$V_{p,q}$} with $p,q\in \mathbb R$ (cf.~\cite{triebel,jacob2006controllability}), defined as
\begin{equation}\label{interp_space}
{V_{p,q}=\left\{x=\begin{pmatrix}\xi \\ \eta\end{pmatrix} \,\vert \, \sum_{k=1}^\infty  \frac{\omega_k^{2 p}( \xi_k^2 +\eta_k^2 )}{|b_k|^{2q}} <\infty \right\}},
\end{equation}
where {$V_{p,q}$} is the Hilbert with respect to the inner product
$$\left<\begin{pmatrix}\xi \\ \eta\end{pmatrix},\begin{pmatrix}\tilde\xi \\ \tilde\eta\end{pmatrix}\right>_{V_{p,q}}=
\sum_{k=1}^\infty \frac{\omega_k^{2 p} ( \xi_k \tilde \xi_k +\eta_k \tilde \eta_k)}{{|b_k|^{2q}}}.
$$
Identifying $H={V_{0,0}}$ with its dual, the dual {of $V_{p,q}$ with respect to the pivot space $H$ is $V_{-p,-q}$.}
Moreover, we have $V_{0,1} \subset H \subset {V_{0,-1}}$ with continuous and dense inclusions.
This follows from the property that {$b\in \ell^2$, hence $b\in \ell^\infty$ and $\|x\|_H^2=\sum_k |b_k|^2\frac{\xi_k^2+\eta_k^2}{|b_k|^2}\leq \|b\|_{\ell^\infty}^2\|x\|_{V_{0,1}}^2$.}

Note that, under Assumption~\eqref{A6},
there exists a constant $C>0$ such that $|b_k|^{-2} \leq C^2 \omega_k^{2\alpha}$, which leads to $\|x\|_{V_{0,1}} \leq C \|x\|_{V_{\alpha,0}}$.
Therefore, $V_{\alpha,0} \subset {V_{0,1}}$, and the inclusion is continuous and dense. So, under~\eqref{A6}, we have
$$
V_{\alpha,0}
\hookrightarrow {V_{0,1}} \hookrightarrow H \hookrightarrow {V_{0,-1}} \hookrightarrow V_{-\alpha,0}.
$$

Note that the operator $A: D(A)\to H$ is skew-adjoint and densely defined under Assumption \eqref{A2} with
$D(A)=V_{1,0}$, and $V_{-1,0}=D(A^*)'$.
It follows from the Lumer--Phillips theorem (see, e.g.,~\cite{engel2000one,tucsnak2009observation}) that $A$ is the infinitesimal generator of a unitary group $\{e^{tA}\}_{t\in\mathbb R}$ of bounded linear operators on $H$.
%For a given time horizon $T$, we define the class of admissible controls for system~\eqref{cs_op} as ${\cal U}_T=L^\infty(0,T)$.
Given any $T>0$, any $x^0\in H$, and any control $u(\cdot)\in L^2(0,T)$,
there exists a unique solution
$x(\cdot) \in C^0([0,T];H) \cap H^1((0,T);{V_{-1,0}})$ of~\eqref{cs_op} (see, e.g., \cite[Part~II, Section~4.2, Proposition~4.12]{trelat2023control}) such that $x(0)=x^0$, and we have
\begin{equation}\label{mildsol}
x(t) = e^{tA} x^0 + \int_0^t e^{(t-s)A} Bu(s)\,ds,\quad t\in [0,T].
\end{equation}
%Here, $D(A^*)'$ is the dual of $D(A^*)$ with respect to the pivot space $H$:
%$$
%D(A^*) = \left\{x \,|\, \sum_{k=1}^\infty \omega_k^{-2} |\xi_k^2+\eta_k^2| < %\infty\right\}.
%$$
%Recall also that the operator $A$ is naturally extended to an operator (still denoted) $A$: $H \rightarrow D(A^*)'$, of dense domain $H$,
%generating on $D(A^*)'{=H_{-1}}$ the extension of the semigroup $\{e^{tA}\}$ to ${H_{-1}}$ (see, e.g.,~\cite{engel2000one,tucsnak2009observation,trelat2023control}).
%
Given any $p,q \in\mathbb R$, depending on the signs of $p$ and $q$, the operator $A$ is naturally restricted or extended to an operator (still denoted)
$A$
{ on $V_{p,q}$, with domain $V_{p+1,q}$},
generating on $V_{p,q}$ the semigroup $\{e^{tA}\}_{t\geq 0}$ that is either the restriction or the extension of the
semigroup on $H$ (see~\cite[Section 4.1.3]{trelat2023control}).

Assumptions~\eqref{A1} and \eqref{A2} are crucial for ensuring controllability properties of system~\eqref{cs_op}.
Recall that, given $x^0\in H$ and $T>0$, the reachable set of system~\eqref{cs_op} at time $T$ is defined as ${\cal R}_T (x^0) = \{x(T)\,|\, u(\cdot)\in L^2(0,T)\}$, with the solutions $x(t)$ represented by formula~\eqref{mildsol},
and ${\cal R}(x^0) = \bigcup_{T>0}{\cal R}_T (x^0)$.
We summarize the controllability conditions in the following proposition.

\begin{proposition}\label{lem_approxcontr}
Let Assumptions~\eqref{A1} and \eqref{A2} hold, and let $T>T_0$, where
\begin{equation}\label{T0}
T_0 = \frac{2\pi}{\omega_*}.
\end{equation}
Then, system~\eqref{cs_op} is exactly controllable in time $T$ in the Hilbert space ${V_{0,1}}$,
and we have  ${\cal R}_T (x^0)= {V_{0,1}}$ for each $x^0\in {V_{0,1}}$,
i.e., ${V_{0,1}}$ is the largest Hilbert subspace of $H$ in which the system is exactly controllable in time $T$.
Moreover, any solution of the control system starting at a point of ${V_{0,1}}$ remains in ${V_{0,1}}$, i.e.,
we have also well-posedness in ${V_{0,1}}$.
\end{proposition}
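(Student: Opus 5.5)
We reduce exact controllability to a nonharmonic Fourier moment problem and solve it with Ingham's inequality. First we pass to complex modal coordinates $z_k=\xi_k+i\eta_k$. Then \eqref{cs_coord} becomes the decoupled system
$$\dot z_k=-i\omega_k z_k+ib_ku,\qquad k\in\mathbb N,$$
and $|z_k|^2=\xi_k^2+\eta_k^2$. Consequently $x\in V_{0,1}$ if and only if $(z_k/b_k)_k\in\ell^2(\mathbb C)$, with equivalent norms.

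\emph{Well-posedness in $V_{0,1}$.} The solution formula reads
$$z_k(t)=e^{-i\omega_k t}z_k(0)+ib_k\int_0^t e^{-i\omega_k(t-s)}u(s)\,ds.$$
Dividing by $b_k$, the first term is an isometry in the weighted norm. The second term is $i$ times the $k$-th Fourier coefficient-type quantity $c_k(t)=\int_0^t e^{i\omega_k s}u(s)ds$ (up to a unimodular factor). By \eqref{A2} the frequencies are uniformly separated, so the Ingham/Bessel upper inequality gives
$$\sum_k |c_k|^2\le C(t)\|u\|^2_{L^2(0,t)}$$
for every $t>0$. The upper bound holds for any time, possibly after splitting $[0,t]$ into short subintervals. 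Hence $z(t)/b\in\ell^2$, and continuity in $t$ follows by dominated convergence. This proves invariance of $V_{0,1}$.

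\emph{Exact controllability.} Given $x^0,x^1\in V_{0,1}$, we seek $u\in L^2(0,T)$ with
$$\int_0^T e^{i\omega_k s}u(s)\,ds=d_k:=-i\,e^{i\omega_k T}\frac{z^1_k-e^{-i\omega_kT}z^0_k}{b_k}, \qquad k\in\mathbb N,$$
and $(d_k)\in\ell^2(\mathbb C)$. Since $u$ is real, we also impose the conjugate equations at $-\omega_k$. We therefore work with the symmetric family $\{e^{\pm i\omega_k s}\}_{k\in\mathbb N}$, whose exponents are $\pm\omega_k$.

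The main obstacle is to verify that this enlarged family has a uniform gap compatible with $T_0=2\pi/\omega_*$. Within each sign the gap is $\ge\omega_*$ by \eqref{A2}. Across signs, $\omega_k+\omega_j\ge2\omega_1>0$, which is only a positive gap, possibly smaller than $\omega_*$. I will handle this with the version of Ingham's theorem that needs the gap $\omega_*$ only asymptotically, i.e.\ for large $|\lambda|$ (Haraux/Beurling variant). This version allows finitely many closer exponents, and the symmetric family has asymptotic gap $\omega_*$ for $T>2\pi/\omega_*$. It follows that $\{e^{\pm i\omega_k s}\}$ is a Riesz sequence in $L^2(0,T)$. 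Taking $u$ in its closed span, the moment problem has a solution for every $\ell^2$ right-hand side satisfying the conjugacy relation, and this $u$ is real-valued. Hence ${\cal R}_T(x^0)\supseteq V_{0,1}$.

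\emph{Maximality.} The inclusion ${\cal R}_T(x^0)\subseteq V_{0,1}$ is exactly the well-posedness estimate above. Together with the previous step this gives ${\cal R}_T(x^0)=V_{0,1}$. Now let $W\subset H$ be any Hilbert space in which the system is exactly controllable. Then $W\subset{\cal R}_T(0)=V_{0,1}$, so $V_{0,1}$ is the largest such space.
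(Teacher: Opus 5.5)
Your proof is correct and takes essentially the paper's approach. Both rest on Haraux's asymptotic-gap version of Ingham's inequality for the symmetric family $\{e^{\pm i\omega_k t}\}$ on an interval of length $T>2\pi/\omega_*$; only the central gap $2\omega_1$ may fall below $\omega_*$. The direct (Bessel) inequality gives ${\cal R}_T(x^0)\subseteq V_{0,1}$, and the inverse inequality gives surjectivity.

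The difference is packaging. You solve the moment problem directly through the Riesz-sequence property, using uniqueness in the closed span to get a real control. The paper instead uses an abstract duality theorem to reduce to the observability inequality in $V_{0,-1}$. Your version spells out the cross-sign gap, the admissibility inclusion and the maximality step, which the paper covers only briefly through its appeal to the ``converse inequality''.
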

\begin{proof}
{It is well-known that, by duality (see, e.g.,~\cite[Theorem~5.30]{trelat2023control}), system~\eqref{cs_op} is exactly controllable in ${V_{0,1}}$ in time $T>0$ if and only if there exists a $C_T>0$ such that the following observability inequality holds:
\begin{equation}
\int_0^T |B^* e^{(T-t)A^*} x|^2 dt \geq C_T \|x\|^2_{{V_{0,-1}}}\quad\text{for all}\; x\in H.
\label{obseq}
\end{equation}
For $x=\begin{pmatrix} \xi \\ \eta \end{pmatrix}\in H$, the coordinate representation of~\eqref{obseq} takes the form
\begin{equation}
\int_0^T \Bigl| \sum_{k\in {\mathbb Z}\setminus \{0\}} c_k e^{i\omega_k t}\Bigr|^2 dt \geq C_T \sum_{k\in\mathbb N} b_k^2 (\xi_k^2+\eta_k^2),
\label{obseq_coord}
\end{equation}
where $\omega_{-k}:=\omega_k$, $c_k:=\frac{b_k(\eta_k - i\xi_k)}{2}$, and $c_{-k}:=\frac{b_k(\eta_k + i\xi_k)}{2}$ for $k\in\mathbb N$.}

{Haraux's generalization of Ingham's inequality~\cite[Chap.~2]{haraux1991systemes} and~\cite[Theorem~4.6]{komornik2005fourier} implies the existence of a constant $C>0$ such that
\begin{equation}
\int_0^T \bigl| \sum_{k\in {\mathbb Z}\setminus \{0\}} c_k e^{i\omega_k t}\bigr|^2 dt \geq C \sum_{k\in{\mathbb Z}\setminus \{0\}} |c_k|^2,
\label{ingham}
\end{equation}
for all square-summable complex sequences $\{c_k\}_{k\in{\mathbb Z}\setminus \{0\}}$, provided that $\inf_{k\neq l} |\omega_k-\omega_l|>0$ and $T>2\pi/{\tilde \omega}$ with
\begin{equation}
\label{sep_condition}
\tilde\omega:=\sup_{K\subset {\mathbb Z}\setminus \{0\}} \inf_{\scriptsize\begin{matrix}k,l\in {\mathbb Z}\setminus (K\cup \{0\}) \\ k\neq l\end{matrix}} |\omega_k - \omega_l|,
\end{equation}
where $K$ runs over the finite subsets of ${\mathbb Z}\setminus \{0\}$. Under Assumption~\eqref{A2}, $\tilde\omega\geq \omega_*$, so that inequality~\eqref{ingham} holds for each $T>T_0$. By combining together~\eqref{obseq_coord} and~\eqref{ingham}, we conclude that the observability inequality~\eqref{obseq_coord} holds.
Moreover, ${V_{0,1}}$ is the largest possible space in which one has controllability, because
the Ingham inequality is ``optimal'', in the sense that the converse inquality holds true.
Actually, the converse inequality is what is referred to in the literature as ``admissibility'',
yielding that solutions of the control system starting in ${V_{0,1}}$, remain in ${V_{0,1}}$. Here, we prove, as
well, that we have the ``converse inequality~\eqref{obseq}'' (and thus, an equivalence of norms
(see~\cite[Proposition 5.13]{trelat2023control}). This shows that we have both controllability and
admissibility in ${V_{0,1}}$ (or in other words, ${V_{0,1}}$ is the Hilbert space given by HUM).}
\end{proof}

\begin{remark}\label{rem_controllability}
Since ${V_{0,1}}$ is dense in $H$, the conditions of Proposition~\ref{lem_approxcontr} imply that system~\eqref{cs_op} is approximately controllable in $H$ for any time $T>T_0$.
However, this does not guarantee exact controllability in $H$.
\end{remark}

%\begin{lemma}\label{lem_uncontrollable}
%System~\eqref{cs_op} is not exactly controllable in $H$,
%and ${\cal R}(x^0)\supsetneq H$ for each $x^0\in H$.
%\end{lemma}
%The proof of this lemma is provided in Section~\ref{sec_proof}.
%

We will consider two types of optimal control problems for the system~\eqref{cs_op}.

\begin{problem}{\em (Static optimization problem)}\label{problem_stat}
Given a vector $\bar x = \begin{pmatrix}\bar \xi \\ \bar\eta \end{pmatrix}\in {V_{0,1}}$ and a real constant $\bar u$,
minimize the cost
$$
J_s(x,u) = \|x -\bar x\|^2_H +  \vert u -\bar u \vert^2
$$
under the constraints
\begin{equation}\label{static_constraints}
x = \begin{pmatrix}\xi \\ \eta\end{pmatrix}\in H,\;
Ax + B u = 0.
\end{equation}
(Here, it is understood that $A$ stands for the extension $A:H\rightarrow V_{-1,0}$.)
\end{problem}

\begin{problem}{\em (Dynamic optimization problem)}\label{problem_dyn}
Given a time horizon $T>0$, {initial value $x^0 \in {V_{0,1}}$}, a vector $\bar x = \begin{pmatrix}\bar \xi \\ \bar\eta \end{pmatrix}\in H$, and a real constant $\bar u$,
minimize the cost
$$
J_d(x(\cdot),u(\cdot)) = \int_0^T \left( \|x(t)-\bar x\|^2_H + \vert u(t)-\bar u \vert^2 \right)dt
$$
over all possible solutions $x(\cdot)$ of~\eqref{cs_op} corresponding to admissible controls $u(\cdot)\in L^2(0,T)$ and satisfying
{$x(0)=x^0$}.
%This admissible set is nonempty by Proposition~\ref{lem_approxcontr}.
\end{problem}

The constrained static minimization Problem~\ref{problem_stat} can be treated by introducing the Lagrangian
$$
L_s (x,\lambda,\mu,u) = \frac{1}{2} J_s(x,u) - \sum_{k=1}^\infty \left( \lambda_k \omega_k\eta_k + \mu_k (-\omega_k \xi_k + b_k u) \right),
$$
where $\lambda_k$ and $\mu_k$ are the Lagrange multipliers corresponding to the coordinate components of~\eqref{static_constraints}.
As the functional $J_s$ is strictly convex and coercive on $H\times{\mathbb R}$, there exists a unique minimizer $(\hat x,\hat u)$ of $J_s$ satisfying~\eqref{static_constraints}, associated with a unique Lagrange multiplier $(\hat \lambda,\hat \mu)$ such that
$\hat x\in V_{1,0}$, $(\hat\lambda^\top,\hat \mu^\top)^\top\in V_{1,0}$, and
\begin{equation}\label{static_minimizer}
\begin{aligned}
&\hat x-\bar x = A^* \hat\Lambda,\;
\hat u-\bar u = B^* \hat\Lambda,\; \hat\Lambda=\begin{pmatrix}\hat \lambda \\ \hat \mu \end{pmatrix},\\
&
\hat x=\begin{pmatrix}\hat \xi \\ \hat\eta \end{pmatrix},\; \hat \xi_k = \bar \xi_k - \omega_k \hat \mu_k=\frac{b_k \hat u}{\omega_k},\; \hat \eta_k = \bar \eta_k + \omega_k \hat \lambda_k=0,\\
&\hat \mu_k = \frac{1}{\omega_k}\left( {\bar \xi}_k -\frac{b_k \hat u}{\omega_k}\right),\; \hat \lambda_k = -\frac{\bar \eta_k}{\omega_k},\\
& \hat u = \bar u + \sum_{k=1}^\infty b_k \hat \mu_k = \left(1+\sum_{k=1}^\infty \frac{b_k^2}{\omega_k^2}\right)^{-1}\cdot \left(\bar u+ \sum_{k=1}^\infty  \frac{b_k \bar \xi_k}{\omega_k} \right).
\end{aligned}
\end{equation}

Let us prove that $\hat x \in {V_{0,1}}$.
Indeed, we have $\hat x = (\hat\xi^\top, \hat\eta^\top)^\top$ with $\hat\eta=0$ and
$\hat\xi = \hat u \Omega^{-1} b$, hence $\hat x \in {V_{0,1}}$ by definition of ${V_{0,1}}$.
Besides, $A\hat\Lambda = \bar x - \hat x \in {V_{0,1}}$ because we have assumed that $\bar x\in {V_{0,1}}$.
The claim is proved.

The existence of solutions to Problem~\ref{problem_dyn} follows, e.g., from Theorem~5.10 in~\cite[Chap.~3]{liyong}.
To justify the applicability of the Pontryagin maximum principle for solving this problem, we refer to the finite codimensionality condition outlined in~\cite{li1991necessary}.
%Under Assumption \eqref{A5}, it is easy to see that
%$H_\alpha \subset {\cal C}$.
%
Under the assumptions of Proposition~\ref{lem_approxcontr}, system~\eqref{cs_op} is exactly controllable in ${V_{0,1}}$ in any time $T>T_0$.
Specifically, if
%\begin{equation}\label{optim_sufficient}
$( \hat x(\cdot), \hat u(\cdot) )$ %\;\text{
is optimal for Problem~\ref{problem_dyn}, $T>T_0$, $x^0\in V_{0,1}$,
then, since system~\eqref{cs_op} is exactly controllable in the Hilbert space ${V_{0,1}}$,
the finite codimension condition of~\cite{li1991necessary} is satisfied for this Hilbert space ${V_{0,1}}$.
Therefore, the pair $( \hat x(t), \hat u(t) )$ satisfies the Pontryagin maximum principle (cf.~\cite[Theorem~2.3]{li1991necessary})
with some adjoint variables $(\lambda(t),\mu(t))$
in the dual ${V_{0,-1}}$ of ${V_{0,1}}$
for $t\in [0,T]$.

To find the optimal solutions, we write the Hamiltonian
$$
H(x,\lambda,\mu,u)  = \sum_{k=1}^\infty \left( \lambda_k \omega_k \eta_k - \mu_k\omega_k\xi_k + \mu_k b_k u\right) - \frac{1}{2}\left(
\|x -\bar x\|^2_H   + \vert u -\bar u \vert^2
\right)
$$
and apply the Pontryagin maximum principle (PMP).
{Since $x(T)$ is free and there is no terminal cost in Problem~\ref{problem_dyn}, the PMP yields the terminal condition $\lambda(T)=0$ and $\mu(T)=0$,}
and the adjoint system of equations
with respect to {$\Lambda(t)=\begin{pmatrix}\lambda(t)\\ \mu(t)\end{pmatrix}$}
takes the form
\begin{equation}\label{sys_adjoint}
\dot \Lambda(t) = A \Lambda(t) + x(t)-\bar x,\; \Lambda(T)=0.
\end{equation}
The extremal control is
\begin{equation}\label{u_extremal}
u(t) = \bar u + \sum_{k=1}^\infty b_k \mu_k(t).
\end{equation}

Solving problem~\eqref{sys_adjoint} backward gives
$$
\Lambda(t)=\int_t^T e^{(t-s)A}\,(\bar x - x(s))\,ds.
$$
Problem~\ref{problem_dyn} is formulated under the standing assumption $\bar x\in H$.
Under this assumption, noting that $e^{tA}$ is unitary on $H$, the representation formula above implies $\Lambda(t)\in H$ for all $t\in[0,T]$ whenever $x(\cdot)\in L^1((0,T);H)$.

In several places below, we will use the stronger statement $\Lambda(t)\in V_{0,1}$.
A sufficient condition for this is $x^0\in V_{0,1}$ and $\bar x\in V_{0,1}$, in which case the state satisfies $x(t)\in V_{0,1}$ and the same representation formula holds in $V_{0,1}$.
This additional assumption is  invoked only when we explicitly use $V_{0,1}$-based estimates or when we wish to emphasize the $b$-weighted regularity of extremals.

%Since $e^{tA}$ is unitary on $H$, we obtain $\Lambda\in C([0,T];H)$ as soon as $x-\bar x\in L^1([0,T];H)$.
%Moreover, if we assume $x^0\in V_{0,1}$ and $\bar x\in V_{0,1}$, then Proposition~\ref{lem_approxcontr} gives $x(\cdot)\in C([0,T];V_{0,1})$, hence $x-\bar x\in L^1([0,T];V_{0,1})$.
Since $e^{tA}$ acts mode-by-mode as a rotation, it is an isometry on any $V_{p,q}$ (in particular on $V_{0,1}$), and we get
$\Lambda(t)\in V_{0,1}$ and $\|\Lambda(t)\|_{{V_{0,1}}}\leq \int_t^T \|x(s)-\bar x\|_{{V_{0,1}}}\,ds$ for every $t\in[0,T]$.

Next, let us show well-definiteness of~\eqref{u_extremal} and $u\in L^2(0,T)$.
From $\Lambda\in C([0,T];H)$ we have $\mu(t)\in\ell^2$ for all $t$, hence the series in~\eqref{u_extremal} is meaningful and
$$
|u(t)-\bar u|=\Big|\sum_{k\geq 1}b_k\mu_k(t)\Big|
\leq \|b\|_{\ell^2}\,\|\mu(t)\|_{\ell^2}.
$$
Since $\mu\in C([0,T];\ell^2)\subset L^2([0,T];\ell^2)$, it follows that $u\in L^2(0,T)$ and
$$
\|u-\bar u\|_{L^2(0,T)}\leq \|b\|_{\ell^2}\,\|\mu\|_{L^2(0,T;\ell^2)}<\infty.
$$

By introducing deviations from the steady-state solution~\eqref{static_minimizer}
and substituting the formulas
\begin{equation}\label{delta_notation}
\begin{matrix}
\delta \xi(t) = \xi(t) - \hat \xi,\\
\delta \eta(t) = \eta(t) - \hat \eta,\\
\delta \lambda(t) = \lambda(t) - \hat \lambda,\\
\delta \mu(t) = \mu(t) - \hat \mu,
\end{matrix}
\qquad
\Delta(t) = \left(\begin{matrix} \delta \xi(t) \\ \delta \eta(t) \\ \delta \lambda(t) \\ \delta \mu(t) \end{matrix}\right)
\end{equation}
into~\eqref{cs_coord},~\eqref{sys_adjoint} with control~\eqref{u_extremal}, we obtain the (so-called variational) system of differential equations:
\begin{equation}\label{delta_coord}
\begin{aligned}
& \dot{\delta \xi_k}(t) = \omega_k \delta \eta_k(t),\; \dot{\delta \eta_k}(t) = -\omega_k \delta \xi_k(t) + b_k \sum_{j=1}^\infty b_j \delta \mu_j(t),\\
& \dot{\delta \lambda_k}(t) = \omega_k \delta \mu_k(t) + \delta \xi_k(t),\\
& \dot{\delta \mu_k}(t) = -\omega_k \delta \lambda_k(t) + \delta \eta_k(t),\; k\in \mathbb N.
\end{aligned}
\end{equation}

We then rewrite system~\eqref{delta_coord} in the abstract form as
\begin{equation}\label{delta_op_small}
\dot \Delta(t) = \tilde A \Delta(t),\quad \Delta(t) \in \tilde X = {V_{0,1}}\times {V_{0,1}},
\end{equation}
$$
\tilde A = \begin{pmatrix} 0 & \Omega & 0 & 0\\
-\Omega & 0 & 0 & b b^* \\
I & 0 & 0 & \Omega \\
0 & I & -\Omega & 0\end{pmatrix},
$$
with $I$ standing for the identity operator.
%, and $\|\Delta(t)\|_X=\left( \|\delta \xi(t) \|^2 + \|\delta \eta(t) \|^2 + \|\delta \lambda(t) \|^2 + \|\delta \mu(t) \|^2  \right)^{1/2}$.

The main result of this paper is a pointwise polynomial
turnpike estimate, which is formulated as follows.

\begin{theorem}\label{thm_turnpike_coordinates}
Assume \eqref{A1}, \eqref{A2},~\eqref{A3}, and fix $\beta>0$ such that \eqref{A5} holds with $\rho=\beta+1$.
For any $T>0$, let $(x,u)$ be the unique optimal solution of Problem~\ref{problem_dyn}, and let $\Lambda=(\lambda,\mu)$ be the associated adjoint variables.
Let $(\hat x,\hat u,\hat\Lambda)$ denote the steady optimal triple of the static Problem~\ref{problem_stat}.
Then there exists a constant $C>0$, independent of $T$ and of the solution, such that for every $t\in[0,T]$,
\begin{multline}\label{eq:main_turnpike}
\Vert x(t)-\hat x\Vert_{V_{0,-\beta-1}} + \Vert \Lambda(t) - \hat\Lambda \Vert_{V_{0,-\beta-1}} \\
\;\leq C \left( \Vert x(0)-\hat x\Vert_{V_{0,1}} + \Vert\hat\Lambda\Vert_{H} \right) \left(\frac{1}{(t+1)^\beta}+\frac{1}{(T-t+1)^\beta}\right).
\end{multline}
\end{theorem}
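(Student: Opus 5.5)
The plan is to study the variational system \eqref{delta_op_small} spectrally. Write $\Delta(t)=e^{t\tilde A}\Delta(0)$, where $\Delta(0)$ has a known state part $x(0)-\hat x$ and an unknown costate part. The terminal condition $\Lambda(T)=0$ instead fixes the costate part of $\Delta(T)$, namely $\Delta_\Lambda(T)=-\hat\Lambda$. The generator $\tilde A$ is Hamiltonian, so its spectrum is symmetric: $\sigma\mapsto-\sigma$ and $\sigma\mapsto\bar\sigma$. I would first locate the eigenvalues $\sigma_k^\pm$, $-\sigma_k^\pm$ perturbatively. This is the content of the spectral results of Section~\ref{sec_spectrum}, which I take as given. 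For $b=0$ the operator decouples into $2\times 2$ blocks with eigenvalues $\pm i\omega_k$ (double, Jordan-type). The rank-one coupling $bb^*$ splits them into $\pm\Re\sigma_k\pm i\omega_k+\dots$ with $\Re\sigma_k\asymp|b_k|$. Assumption \eqref{A3} controls the interaction between distinct modes via sums $\sum_{j\ne k}b_j^2/(\omega_j-\omega_k)^2$.

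The second step is to use eigenvectors of $\tilde A$ to build a Riesz basis (Section~\ref{sec_Riesz}), split into a stable family (eigenvalues with $\Re<0$) and an antistable family ($\Re>0$). Near-degenerate pairs may require taking linear combinations of eigenvectors. The Riesz property is obtained by a Bari-type argument: the eigenvectors are quadratically close to the decoupled modal vectors. Assumption \eqref{A5} with $\rho=\beta+1$ is exactly the weighted closeness condition needed to make this work in the weighted scale $H^b_{-(\beta+1)}$, i.e. in $V_{0,-\beta-1}$. In that basis I decompose $\Delta(t)=\Delta^s(t)+\Delta^u(t)$. The stable coordinates are determined from $t=0$ and decay like $e^{-\Re\sigma_k t}$. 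The antistable coordinates are determined from $t=T$ and decay like $e^{-\Re\sigma_k(T-t)}$. The two-point boundary problem is then solved as follows. The stable/antistable subspaces are graphs over the state and costate components, which are the infinite-dimensional analogues of the Riccati solutions $P_\pm$. One shows that the initial state data determine the stable coefficients, and the terminal costate data $-\hat\Lambda$ determine the antistable coefficients, with coefficient bounds in terms of $\|x(0)-\hat x\|_{V_{0,1}}+\|\hat\Lambda\|_H$ uniformly in $T$. Uniformity comes from the smallness of the cross terms $e^{-\Re\sigma_k T}$, or, more robustly, from invertibility of the block matrices $\begin{pmatrix}I & *\\ * & I\end{pmatrix}$, which is guaranteed by the graph property.

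The final step converts modal decay into polynomial decay. For each mode,
\[
e^{-c|b_k|t}\le C_\beta |b_k|^{-\beta}(t+1)^{-\beta},
\]
and multiplying the $k$-th coefficient by $|b_k|^{\beta+1}$ (the weight of $V_{0,-\beta-1}$) absorbs the loss $|b_k|^{-\beta}$. What remains is $|b_k|$ times a coefficient bounded in $V_{0,1}$ or $H$, which is summable. The same works with $T-t$ for the antistable part. Summing both parts and returning to $x-\hat x$ and $\Lambda-\hat\Lambda$ via the Riesz-basis equivalence of norms in the weighted space gives \eqref{eq:main_turnpike}.

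I expect the main obstacle to be the second step. One needs a Riesz basis that remains a Riesz basis in the weighted norm $V_{0,-\beta-1}$, with $k$-independent constants, while the eigenvalue pairs $\sigma_k$ and $\bar\sigma_k$ and the stable/antistable pairs approach each other as $|b_k|\to0$. The near-collision of the eigenvalues $\pm\Re\sigma_k+i\omega_k$ makes the raw eigenvectors nearly parallel. I would first try renormalized combinations, such as $(v_k^+-v_k^-)/|b_k|$, to restore a uniform basis, and check that this renormalization is compatible with the weighted Bari condition \eqref{A5}.
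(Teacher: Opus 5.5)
Your architecture matches the paper's. You have the spectral asymptotics with $\Re\nu_k\asymp|b_k|$. You build a Riesz basis from paired eigenvector combinations; your $(v_k^+-v_k^-)/|b_k|$ is essentially $\theta_k^\zeta$ in \eqref{Riesz_basis}. You use a weighted Bari argument in which \eqref{A5} with $\rho=\beta+1$ makes $\pi_0$ an isomorphism of $V^X_{0,-\beta-1}$ (Lemma~\ref{lem:pi0-weighted}). Finally, you convert $e^{-c|b_k|t}\le C_\beta|b_k|^{-\beta}(t+1)^{-\beta}$ using the weight $|b_k|^{\beta+1}$.

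The gap is in the step you treat as routine: bounds on the two-point boundary problem that are uniform in $T$. Neither of your mechanisms supplies them.

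\emph{The cross terms are not small.} Since $\Re\nu_k\to0$, for every fixed $T$ one has $\sup_k e^{-\Re\nu_kT}=1$. So no Neumann-series argument is available at any horizon, and this is exactly what separates the problem from the exponential turnpike setting.

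\emph{The graph property does not give uniformity either.} In each modal block the stable and antistable eigendirections make an angle $\asymp\Re\nu_k$; this is why $\|g_k^\pm\|\asymp1/|b_k|$ in \eqref{g_norm}. Hence the state-to-costate graph maps, i.e.\ your $P_\pm$, have modal norms $\asymp1/|b_k|$ and are unbounded on $H$. Your block operator $\begin{pmatrix} I & \ast \\ \ast & I \end{pmatrix}$ is therefore not a bounded perturbation of the identity. Its invertibility only reflects uniqueness of the optimal solution and gives no bound uniform in $k$ and $T$.

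The missing ingredient is the quantitative modewise computation in Proposition~\ref{prop_shoot}. There, the multipliers of the inverse shooting map are combinations of $\tanh(\nu_kT)$ and $1/\cosh(\nu_kT)$. Lemma~\ref{lem_coshbound} bounds these uniformly in $k$ and $T$, because for large $k$ one has $|\mathrm{Im}\,\nu_k|<\Re\nu_k$, which keeps $\nu_k$ in the sector $|\arg z|\le\pi/4$. The remaining factor $1/\Re\nu_k\asymp1/|b_k|$ falls only on the initial state. That is why $\|x(0)-\hat x\|_{V_{0,1}}$ appears in \eqref{eq:main_turnpike}.

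You can see the mechanism in a single block of \eqref{a_zp}, treating $a_k^\zeta$ as state and $a_k^q$ as costate. Parametrize the stable part by its state component at $t=0$ and the antistable part by its costate component at $t=T$. The two cross terms then have moduli $\Re\nu_k\,e^{-\Re\nu_kT}$ and $e^{-\Re\nu_kT}/\Re\nu_k$, and the latter is unbounded in $k$. Their product, however, is $-e^{-2\Re\nu_kT}$, so the determinant equals $1+e^{-2\Re\nu_kT}\ge1$. Uniformity comes from this sign structure, not from smallness.

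Once this shooting bound (the analogue of \eqref{ineq_shoot}) is established, your direct bookkeeping closes the argument: take the coefficients from the data, then apply modal decay. This works just as well as the paper's two-step route through \eqref{turnpike}.
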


This means that the optimal trajectory stays polynomially close (in $V_{0,-\beta-1}$) to the steady optimal state
on any interior sub-interval $[\varepsilon,T-\varepsilon]$.

Theorem~\ref{thm_turnpike_coordinates} will be proved in Section~\ref{sec_proof}, using auxiliary Lemmas~\ref{lem_spectrum_balls}--\ref{lem_coshbound} and Propositions~\ref{thm_turnpike}--\ref{prop_shoot},
which are presented below.

\section{Proof of Theorem~\ref{thm_turnpike_coordinates}}\label{sec_proof}

For greater generality in describing the large-time behavior of the variational system~\eqref{delta_op_small},
we consider its state $\Delta(t)$ in the complex-valued Hilbert space $X=(\ltwoC)^4$, where
$\ltwoC$ is the Hilbert space
of square-summable complex sequences with the inner product defined as
$
\left< z,\zeta \right>_\ltwoC = \sum_{k=1}^\infty z_k {\overline \zeta}_k
$.
As a result, the ``complexified'' system~\eqref{delta_op_small} takes the form
\begin{equation}\label{delta_op}
\dot \Delta(t) = \tilde A \Delta(t),\quad \Delta(t) \in X = (\ltwoC)^4,
\end{equation}
with $\tilde A:D(\tilde A)\to X$ represented in its block-matrix form as
$$
\tilde A =
\begin{pmatrix} A & B B^* \\ I & -A^*
\end{pmatrix}.
$$
We also perform the following transformation in system~\eqref{delta_op} with  complex coefficients:
\begin{equation}\label{complex_var}
z = \delta \xi + i \delta\eta,\; \zeta = \delta \xi - i \delta\eta,\; p = \delta \lambda + i \delta \mu,\; q= \delta \lambda - i \delta \mu.
\end{equation}
With this transformation, system~\eqref{delta_op} takes the form:
\begin{equation}\label{theta_c}
\dot \theta(t)= {\cal A} \theta(t),\quad \theta(t) \in X=\left(\ltwoC\right)^4,
\end{equation}
where
\begin{equation}
\theta = \begin{pmatrix} z \\ \zeta \\ p \\ q
\end{pmatrix},\;
{\cal A} = \begin{pmatrix} -i\Omega & 0 & \frac{1}{2}bb^* & -\frac{1}{2}bb^* \\
0 & i\Omega & -\frac{1}{2}bb^* & \frac{1}{2}bb^* \\
I & 0 & -i\Omega & 0 \\
0 & I & 0 & i\Omega
\end{pmatrix}.\label{A_op}
\end{equation}
Note that the change of variables~\eqref{complex_var} is defined on the whole complex space $X$ by means of the linear operator $\pi_1:X\to X$, such that
\begin{equation}\label{pi1op}
X\ni \Delta =\begin{pmatrix} \delta \xi \\ \delta \eta \\ \delta \lambda \\ \delta \mu \end{pmatrix} \mapsto \pi_1 \Delta = \begin{pmatrix}\delta \xi + i \delta\eta \\ \delta \xi - i \delta\eta \\ \delta \lambda + i \delta \mu \\ \delta \lambda - i \delta \mu\end{pmatrix}\in X,
\end{equation}
and the inverse operator $\pi_1^{-1}:X\to X$ is defined by the rule:
\begin{equation}\label{pi1inv}
X\ni\theta = \begin{pmatrix} z \\ \zeta \\ p \\ q
\end{pmatrix} \mapsto \pi_1^{-1} \theta = \frac12 \begin{pmatrix} z+\zeta \\ i(\zeta - z) \\ p+q\\ i (q-p)\end{pmatrix}\in X.
\end{equation}
In what follows, we will omit subscripts on $\|\cdot\|$ when the context is clear.
Using the triangle inequality, we conclude that the above operators are bounded:
\begin{equation}\label{pi1norm}
\|\pi_1\| \leq 2,\; \|\pi_1^{-1}\| \leq 1.
\end{equation}

\subsection{Analysis of the spectrum}\label{sec_spectrum}
Let us perform the spectral analysis for the operator $\cal A$. First, we compute its eigenelements. Assume that
\begin{equation}\label{spectral_pb}
{\cal A} \theta = \sigma \theta,\; \theta \in D({\cal A})\subset X.
\end{equation}
This operator equation can be written with respect to the coordinates of $(z,\zeta,p,q)$ indexed by $k$ as
\begin{equation}\label{spectral_k}
\begin{aligned}
& -i \omega_k z_k + b_k \phi = \sigma z_k, \\
& i \omega_k \zeta_k - b_k \phi = \sigma \zeta_k, \\
& z_k - i\omega_k p_k = \sigma p_k,\\
& \zeta_k + i\omega_k q_k = \sigma q_k,\; k\in \mathbb N,
\end{aligned}
\end{equation}
where
\begin{equation}\label{phi_form}
\phi = \frac{1}{2} \sum_{j=1}^\infty b_j(p_j-q_j) = \frac{1}{2}b^* (p-q).
\end{equation}

Under~\eqref{A1}, the system of equations~\eqref{spectral_k} has only the trivial solution if $\sigma = \pm i \omega_k$ for some $k\in\mathbb N$.
Hence, for any $k\in\mathbb N$, $\sigma = \pm i \omega_k$ is not an eigenvalue of $\cal A$.
Assuming that $\sigma\notin \{\pm i\omega_1,\pm i\omega_2,...\}$, we can solve~\eqref{spectral_pb} with respect to the components of $\theta$
parameterized by $\phi$:
\begin{equation}\label{theta_phi}
\begin{aligned}
& z = (\sigma I + i\Omega)^{-1}b\phi,\; \zeta = - (\sigma I - i\Omega)^{-1}b\phi,\\
& p = (\sigma I + i\Omega)^{-1}z,\;q = (\sigma I - i\Omega)^{-1}\zeta.
\end{aligned}
\end{equation}
The substitution of these formulas into~\eqref{phi_form} yields the solvability condition of~\eqref{spectral_pb}:
\begin{equation}\label{phi_relation}
\phi = \frac{1}{2}b^* \left( (\sigma I + i\Omega)^{-2} +(\sigma I - i\Omega)^{-2}  \right) b \phi.
\end{equation}
When $\phi =0$, the only solution of~\eqref{spectral_pb} is $\theta=0$. Hence, the nontrivial solutions of~\eqref{spectral_pb} correspond to $\phi\neq 0$; in this case we obtain the \emph{characteristic equation} of problem~\eqref{spectral_pb} from~\eqref{phi_relation}:
\begin{equation}\label{chareq}
\sum_{k=1}^\infty b_k^2 \left( \frac{1}{(\sigma + i \omega_k)^2} +  \frac{1}{(\sigma - i \omega_k)^2} \right) = 2,
\end{equation}
or, equivalently:
$$
\sum_{k=1}^\infty \frac{b_k^2(\sigma^2 - \omega_k^2)}{(\sigma^2+\omega_k^2)^2} = 1.
$$
The point spectrum $\sigma_p({\cal A})$ of the operator $\cal A$ is defined by the solutions of~\eqref{chareq}.

\begin{lemma}\label{lem_spectrum_balls}
The point spectrum $\sigma_p({\cal A})$ of $\cal A$ has the following properties:
\begin{enumerate}
\item[1)] $\sigma_p({\cal A})$ is Hamiltonian: if $\sigma$ is an eigenvalue of $\cal A$, then so are $-\sigma$, $\overline{\sigma}$, and $-\overline{\sigma}$;
\item[2)] $\sigma_p({\cal A})\cap i{\mathbb R} = \emptyset$, i.e., the spectrum is hyperbolic;
\item[3)] $\sigma_p({\cal A}) \subset \cup_{k=1}^\infty({\cal B}_k^+ \cup {\cal B}_k^-)$, where
\end{enumerate}
$$
{\cal B}_k^+ = \{\sigma \in {\mathbb C}\, \vert \, \vert\sigma - i\omega_k\vert \leq \|b\|_\ltwoR\},
\qquad
{\cal B}_k^- = \{\sigma \in {\mathbb C}\, \vert \, \vert\sigma + i\omega_k\vert \leq \|b\|_\ltwoR\}.
$$
\end{lemma}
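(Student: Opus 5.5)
Everything will be read off the characteristic equation \eqref{chareq}, which by the preceding discussion characterizes $\sigma_p({\cal A})$: $\sigma$ is an eigenvalue if and only if $\sigma\notin\{\pm i\omega_k\}$ and
$$F(\sigma):=\sum_{k=1}^\infty b_k^2\left(\frac{1}{(\sigma+i\omega_k)^2}+\frac{1}{(\sigma-i\omega_k)^2}\right)=2 .$$

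\textbf{Item 1 (Hamiltonian symmetry).} Replacing $\sigma$ by $-\sigma$ turns $(\sigma\pm i\omega_k)^2$ into $(\sigma\mp i\omega_k)^2$, so the two terms in each summand are swapped and $F(-\sigma)=F(\sigma)$. Since $b_k,\omega_k$ are real, $\overline{F(\sigma)}=F(\overline\sigma)$, and $2$ is real, so $F(\overline\sigma)=2$ as well. Composing the two symmetries gives $-\overline\sigma$. The excluded set $\{\pm i\omega_k\}$ is invariant under these maps, so all four points are genuine eigenvalues.

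\textbf{Item 2 (no imaginary eigenvalues).} Let $\sigma=is$ with $s\in\mathbb R$ and $s\neq\pm\omega_k$. Then $\sigma^2=-s^2$, and the equivalent form of the equation gives
$$\sum_{k=1}^\infty \frac{b_k^2(\sigma^2-\omega_k^2)}{(\sigma^2+\omega_k^2)^2}=-\sum_{k=1}^\infty \frac{b_k^2(s^2+\omega_k^2)}{(\omega_k^2-s^2)^2}.$$
Each term on the right is real and nonpositive, so the sum cannot equal $1$. The points $\pm i\omega_k$ were already excluded, so $\sigma_p({\cal A})\cap i\mathbb R=\emptyset$.

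\textbf{Item 3 (localization in the balls ${\cal B}_k^\pm$).} Suppose $\sigma$ lies outside every ${\cal B}_k^\pm$, i.e. $|\sigma\mp i\omega_k|>\|b\|_{\ltwoR}$ for all $k$. Each summand then satisfies
$$\left|\frac{1}{(\sigma+i\omega_k)^2}+\frac{1}{(\sigma-i\omega_k)^2}\right|<\frac{2}{\|b\|^2_{\ltwoR}}.$$
Summing with weights $b_k^2$ yields $|F(\sigma)|<2$, because $\sum_k b_k^2=\|b\|^2_{\ltwoR}$ and at least one $b_k\neq0$ by \eqref{A1}, which makes the inequality strict. Hence $F(\sigma)\neq2$ and $\sigma$ is not an eigenvalue.

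The only point needing some care is this last strictness. It is not an infimum issue: the inequality is strict term by term, and the series converges absolutely since the denominators are bounded below by $\|b\|^2$. I therefore expect no real obstacle. The remaining bookkeeping is to note that the reduction from \eqref{spectral_pb} to \eqref{chareq} with $\phi\neq0$ is an equivalence. In particular, for $\sigma$ solving \eqref{chareq}, the vector $\theta$ built in \eqref{theta_phi} lies in $D({\cal A})$, because the resolvent factors are bounded when $\sigma$ is away from $\pm i\omega_k$, so the converse direction in Item 1 is legitimate.
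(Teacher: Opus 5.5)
Your proof is correct and follows the paper's argument: items 1 and 2 come from the symmetry and sign of the characteristic equation exactly as in the paper, and item 3 uses the same termwise $\ell^1$--$\ell^\infty$ bound on $F(\sigma)$. The one difference is that you prove item 3 by contraposition with strict termwise inequalities, so you need no passage from $\inf_k\min\{|\sigma+i\omega_k|,|\sigma-i\omega_k|\}\leq\|b\|_{\ltwoR}$ to an index $k$ that attains the infimum (the step where the paper appeals to \eqref{A2}).
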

\begin{proof}
The characteristic equation~\eqref{chareq} is invariant under changing $\sigma$ to $-\sigma$ and $\sigma$ to $\overline{\sigma}$, which proves the first property.
This is reminiscent from the fact that the matrix $\Delta$ is Hamiltonian (see~\cite[Lemma 2]{trelat2015turnpike}).
To prove the second one, we recall that it has been already noted that $\pm i \omega_k$ are not eigenvalues of $\cal A$.
Moreover, the left-hand side of~\eqref{chareq} is negative if $\sigma=i\omega$, $\omega\in \mathbb R$;
therefore, the characteristic equation has no purely imaginary roots.

It remains to prove the third property. For this purpose, we estimate the left-hand side of~\eqref{chareq} by using Hölder's inequality:
\begin{equation}
\left\vert \sum_{k=1}^\infty  \left( \frac{b_k^2}{(\sigma + i \omega_k)^2} +  \frac{b_k^2}{(\sigma - i \omega_k)^2} \right) \right\vert 
\leq 2 \|b\|^2_\ltwoR
\cdot \sup_{j\in\mathbb N} \left\{ \frac{1}{\vert\sigma + i \omega_k\vert^2}, \frac{1}{\vert\sigma - i \omega_k\vert^2}\right\}.
\label{sum_est}
\end{equation}
If~\eqref{chareq} holds for some $\sigma\in\mathbb C$, then~\eqref{sum_est} implies
$$
\sup_{j\in\mathbb N} \left\{ \frac{1}{\vert\sigma + i \omega_k\vert^2}, \frac{1}{\vert\sigma - i \omega_k\vert^2}\right\}\geq \frac{1}{\|b\|^2_\ltwoR},
$$
i.e., equivalently,
$$
\inf_{j\in\mathbb N} \left\{ \vert\sigma + i \omega_k\vert, \vert\sigma - i \omega_k\vert \right\}\leq \|b\|_\ltwoR,
$$
which, together with assumption~\eqref{A2}, proves the assertion~3) of Lemma~\ref{lem_spectrum_balls}.
\end{proof}

It turns out that the operator $\cal A$ has only point spectrum, owing to the following lemma.
\begin{lemma}\label{lemma_resolventA}
For each $\lambda\in {\mathbb C}\setminus \sigma_p({\cal A})$, the resolvent $R(\lambda;{\cal A})=({\cal A}-\lambda I)^{-1}$ is a bounded linear operator from $X$ to $X$.
\end{lemma}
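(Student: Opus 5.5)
We will solve the resolvent equation $({\cal A}-\lambda I)\theta = f$ explicitly, for $f=(f^z,f^\zeta,f^p,f^q)\in X$, by the same reduction to a scalar unknown that produced the characteristic equation~\eqref{chareq}.

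\textbf{Step 1: reduction to a scalar equation.} First treat $\lambda\notin\{\pm i\omega_k\}_{k\in\mathbb N}$. Written coordinatewise, the equation reads
\[
-(\lambda + i\omega_k) z_k + b_k\phi = f^z_k,\qquad (i\omega_k-\lambda)\zeta_k - b_k\phi = f^\zeta_k,
\]
\[
z_k - (\lambda+i\omega_k)p_k = f^p_k,\qquad \zeta_k+(i\omega_k-\lambda)q_k=f^q_k,
\]
with $\phi=\frac12 b^*(p-q)$ as in~\eqref{phi_form}. We solve successively for $z,\zeta$ and then $p,q$ as functions of $\phi$ and $f$, using the diagonal resolvents $(\lambda I\pm i\Omega)^{-1}$. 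Substituting into the definition of $\phi$ gives a scalar linear equation
\[
\bigl(1-\tfrac12 b^*\bigl((\lambda I+i\Omega)^{-2}+(\lambda I-i\Omega)^{-2}\bigr)b\bigr)\phi = \ell_\lambda(f),
\]
where $\ell_\lambda$ is a linear functional built from $b^*(\lambda I\pm i\Omega)^{-1}f^p$, $b^*(\lambda I\pm i\Omega)^{-2}f^z$, and so on.

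\textbf{Step 2: boundedness when $\lambda$ avoids $\pm i\omega_k$.} The coefficient of $\phi$ is exactly $1$ minus the left side of~\eqref{chareq} divided by $2$. It is therefore nonzero, since $\lambda\notin\sigma_p({\cal A})$. The operators $(\lambda I\pm i\Omega)^{-1}$ are bounded on $\ltwoC$ with norm at most $1/d(\lambda)$, where $d(\lambda)=\inf_k\min|\lambda\pm i\omega_k|>0$; this uses \eqref{A2}, since $\omega_k\to\infty$ makes the set $\{\pm i\omega_k\}$ closed and discrete. Because $b\in\ell^2$, the functional $\ell_\lambda$ is bounded by Cauchy--Schwarz, so $|\phi|\le C(\lambda)\|f\|$. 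The components $z,\zeta,p,q$ are then bounded images of $(f,\phi)$. This yields $\theta\in X$ with $\|\theta\|\le C(\lambda)\|f\|$, and uniqueness follows from $\lambda\notin\sigma_p({\cal A})$. Finally, we check $\theta\in D({\cal A})$: this holds because ${\cal A}\theta=\lambda\theta+f\in X$.

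\textbf{Step 3: the points $\lambda=\pm i\omega_m$.} This is the main obstacle, since the diagonal inversion fails at one index. Take $\lambda=i\omega_m$ (the case $-i\omega_m$ is symmetric). For every index $k\ne m$ in the $z,p$ rows and in all $\zeta,q$ rows, the inversion still works, with a uniform bound coming from the gap~\eqref{A2}. At index $m$ in the $z$ and $p$ rows, the equations become
\[
-2i\omega_m z_m + b_m\phi = f^z_m,\qquad z_m-2i\omega_m p_m=f^p_m.
\]
These are still solvable, because the factor $\lambda+i\omega_m=2i\omega_m\neq 0$. The singular factor actually occurs in the $\zeta_m,q_m$ rows, where $i\omega_m-\lambda=0$. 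There the equations read $-b_m\phi=f^\zeta_m$ and $\zeta_m=f^q_m$. So $\phi=-f^\zeta_m/b_m$ is determined directly, using $b_m\ne0$ from~\eqref{A1}. The unknown $q_m$ then becomes free, and it is fixed by the constraint $\phi=\frac12 b^*(p-q)$, which is solvable for $q_m$ since $b_m\neq0$. Every resulting component is bounded linearly by $\|f\|$, so $R(\lambda;{\cal A})$ is bounded here too. Uniqueness follows as before, using that $\pm i\omega_m\notin\sigma_p({\cal A})$.
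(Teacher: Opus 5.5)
Your proposal is correct and follows the paper's proof essentially step by step. Away from $\{\pm i\omega_k\}$, you solve for $(z,\zeta,p,q)$ in terms of the scalar $\phi$, whose coefficient is nonzero precisely because $\lambda$ does not satisfy the characteristic equation. At $\lambda=i\omega_m$, you obtain $\phi=-f^\zeta_m/b_m$ and $\zeta_m=f^q_m$, and recover $q_m$ from $\phi=\frac12 b^*(p-q)$, exactly as in the paper's case 2).

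The only blemish is a wording slip in Step 3, where the rows are swapped. At $\lambda=i\omega_m$ the diagonal inversion works at every index of the $z,p$ rows and at $k\neq m$ in the $\zeta,q$ rows. That is what your subsequent sentences actually use.
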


\begin{proof}
The lemma is not straightforward and requires some computations.
Exploiting the structure of $\cal A$ in~\eqref{A_op}, we observe that the resolvent operator $R(\lambda;{\cal A})$ maps a vector $\tilde\theta\in X$ to $\theta = ({\cal A}-\lambda I)^{-1}\tilde\theta\in X$ according to the system of equations
\begin{equation}
\begin{aligned}
& \begin{matrix}
-(i\Omega + \lambda I) z + \tilde\phi b = \tilde z,\\
(i\Omega - \lambda I) \zeta - \tilde\phi b = \tilde \zeta,\\
-(i\Omega + \lambda I) p + z = \tilde p,\\
(i\Omega - \lambda I) q + \zeta = \tilde q,
\end{matrix}\quad
\theta = \begin{pmatrix} z \\ \zeta \\ p \\ q
\end{pmatrix},\; \tilde\theta = \begin{pmatrix} \tilde z \\ \tilde \zeta \\ \tilde p \\ \tilde q
\end{pmatrix},\\
& \tilde \phi = \frac{1}{2} b^*(p-q) = \frac{1}{2}\left<p-q,b\right>.
\end{aligned}
\label{resolvent_eq}
\end{equation}
Three cases are possible.

1)~If $\lambda\notin \sigma_p({\cal A}) \cup \{\pm i\omega_1,\pm i \omega_2,...\}$ then, for any $\tilde \theta\in X$, the system~\eqref{resolvent_eq} has a unique solution $\theta\in X$ whose components are
\begin{equation}
\begin{aligned}
& z = (\lambda I + i\Omega)^{-1} \bigl(\tilde \phi(\tilde \theta)  b - \tilde z\bigr),\\
& \zeta = -(\lambda I - i\Omega)^{-1} \bigl(\tilde \phi(\tilde \theta) b + \tilde \zeta\bigr),\\
& p = (\lambda I + i\Omega)^{-2}\bigl(\tilde\phi(\tilde \theta) b - \tilde z\bigr) - (\lambda I + i\Omega)^{-1}\tilde p,\\
& q = - (\lambda I - i\Omega)^{-2}\bigl(\tilde\phi(\tilde \theta) b + \tilde \zeta\bigr) - (\lambda I - i\Omega)^{-1}\tilde q,
\end{aligned}\label{resolvent_comp}
\end{equation}
where the linear functional $\tilde \phi:X\to \mathbb C$ is bounded:
\begin{multline}\label{phi_func}
\tilde \phi (\tilde \theta) = b^*\bigl\{(\lambda I + i\Omega)^{-2}\tilde z - (\lambda I - i\Omega)^{-2}\tilde \zeta \\
+ (\lambda I + i\Omega)^{-1}\tilde p - (\lambda I - i\Omega)^{-1}\tilde q \bigr\}  /\left\{b^*(\lambda I + i\Omega)^{-2}b + b^*(\lambda I - i\Omega)^{-2}b -2\right\}.
\end{multline}
Formulas~\eqref{resolvent_comp} are obtained by solving~\eqref{resolvent_eq} with respect to $\theta$ and treating $\tilde \phi$ as a parameter. Then the value of $\tilde \phi =  \frac{1}{2} b^*(p-q)$ is expressed in terms of $\tilde\theta$ by~\eqref{phi_func}, provided that $\sigma=\lambda$ is not a solution of~\eqref{chareq}.
Altogether, the formulas~\eqref{resolvent_comp}--\eqref{phi_func} define the bounded linear operator $R(\lambda;{\cal A}):X\ni\tilde \theta \mapsto \theta \in X$ for $\lambda\notin \sigma_p({\cal A}) \cup \{\pm i\omega_1,\pm i \omega_2,...\}$.

2)~If $\lambda=i\omega_k$ for some $k\in\mathbb N$, then the coordinates $(z_j,\zeta_j,p_j,q_j)$ of $\theta$
can be inferred from the components of~\eqref{resolvent_comp}
with $j\neq k$ as:
\begin{equation}
\begin{aligned}
& z_j = \frac{b_j \tilde \phi(\tilde \theta)  - \tilde z_j}{\lambda + i\omega_j},\\
& \zeta_j = -\frac{b_j\tilde \phi(\tilde \theta) + \tilde \zeta_j}{\lambda - i\omega_j},\\
& p_j = \frac{b_j\tilde\phi(\tilde \theta) - \tilde z_j}{(\lambda + i\omega_j)^2} - \frac{\tilde p_j}{\lambda + i\omega_j},\\
& q_j = - \frac{b_j\tilde\phi(\tilde \theta) + \tilde \zeta_j}{(\lambda - i\omega_j)^2} - \frac{\tilde q_j}{\lambda - i\omega_j},\quad \text{for}\;j\neq k,
\end{aligned}\label{resolvent_scalar}
\end{equation}
and the coordinates $(z_k,\zeta_k,p_k,q_k)$ of $\theta$ together with $\tilde\phi(\tilde \theta)$ are obtained from~\eqref{resolvent_eq}:
\begin{equation}
\tilde\phi(\tilde \theta) = -\tilde \zeta_k / b_k,\label{phi_k}
\end{equation}
\begin{equation}
\begin{aligned}
& z_k = -\frac{\tilde z_k + \tilde \zeta_k}{2i\omega_k},\\
& \zeta_k = \tilde q_k,\\
& p_k = \frac{\tilde z_k + \tilde \zeta_k}{4 \omega_k^2} - \frac{\tilde p_k}{2i\omega_k},\\
& q_k = \frac{\tilde z_k + \tilde \zeta_k}{4 \omega_k^2} - \frac{\tilde p_k}{2i\omega_k} + \frac{2 \tilde \zeta_k}{b_k^2}+ \frac{1}{b_k}\sum_{j\neq k} b_j(p_j-q_j).
\end{aligned}\label{z_k}
\end{equation}
Hence, for each $\tilde \theta \in X$, the values $\tilde\phi(\tilde \theta)$ and  $(z_j,\zeta_j,p_j,q_j)$ are well-defined by~\eqref{phi_k} and~\eqref{resolvent_scalar} for all $j\neq k$; then the component  $(z_k,\zeta_k,p_k,q_k)$  is computed from~\eqref{z_k}. The above procedure defines the bounded linear operator $R(i\omega_k;{\cal A}):X\ni\tilde \theta \mapsto \theta \in X$.

3)~The case $\lambda=-i\omega_k$ is treated analogously to case~2) with the coordinates $(z_j,\zeta_j,p_j,q_j)$ of $\theta=R(-i\omega_k;{\cal A})\tilde \theta$ defined by~\eqref{resolvent_scalar}, and
$$
\tilde\phi(\tilde \theta) = \tilde z_k / b_k,
$$
$$
\begin{aligned}
& z_k = \tilde p_k,\\
& \zeta_k = \frac{\tilde z_k + \tilde \zeta_k}{2i\omega_k},\\
& p_k = \frac{\tilde z_k + \tilde \zeta_k}{4 \omega_k^2} + \frac{\tilde q_k}{2i\omega_k} + \frac{2 \tilde z_k}{b_k^2}+ \frac{1}{b_k}\sum_{j\neq k} b_j(q_j-p_j),\\
& q_k = \frac{\tilde z_k + \tilde \zeta_k}{4 \omega_k^2} + \frac{\tilde q_k}{2i\omega_k}.
\end{aligned}
$$
The lemma has been proved.
\end{proof}

To estimate the asymptotics of the eigenvalues of $\cal A$, we need the following auxiliary result, which is a consequence of Rouché's theorem~\cite[p.~390]{remmert1991theory}.
\begin{lemma}\label{lemma_Rouche}
Let $f(\lambda)=\lambda^2 - \lambda_0^2  + r(\lambda)$ be a complex-valued function such that $\lambda_0>0$ and
 $r(\lambda)$ is holomorphic in a domain that contains the closed neighborhood
 $$
 {\overline B}_\varepsilon (\lambda_0) = \{ \lambda\in{\mathbb C}\, \vert \, \vert\lambda-\lambda_0\vert \leq \varepsilon\}
 $$
for some $0<\varepsilon<\lambda_0$.
If
\begin{equation}\label{rouche_condition}
\vert r(\lambda_0+\tilde \lambda) \vert < \varepsilon (2 \lambda_0 - \varepsilon)\quad \text{for all}\;\; \tilde \lambda\in{\mathbb C}\; \text{s.t.}\; \vert \tilde \lambda \vert = \varepsilon,
\end{equation}
then $f(\lambda)$ has exactly one zero\footnote{We count each zero of $f(\lambda)$ with its multiplicity.} in $B_\varepsilon(\lambda_0) = \{ \lambda\in{\mathbb C}\, \vert \, \vert\lambda-\lambda_0\vert < \varepsilon\}$.
\end{lemma}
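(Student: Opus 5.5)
The plan is to compare $f$ with $g(\lambda)=\lambda^2-\lambda_0^2$ on the circle $\partial B_\varepsilon(\lambda_0)$ and apply Rouché's theorem in its symmetric form: if $|f-g|<|g|$ on $\partial B_\varepsilon(\lambda_0)$, then $f$ and $g$ have the same number of zeros, counted with multiplicity, in $B_\varepsilon(\lambda_0)$. Here $f-g=r$, which is holomorphic on a neighborhood of the closed disk ${\overline B}_\varepsilon(\lambda_0)$. The polynomial $g$ is entire, so both functions meet the regularity requirements of Rouché's theorem.

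First I count the zeros of $g$ in the disk. They are $\pm\lambda_0$. Since $0<\varepsilon<\lambda_0$, the point $\lambda_0$ lies inside $B_\varepsilon(\lambda_0)$, while $|-\lambda_0-\lambda_0|=2\lambda_0>\varepsilon$ places $-\lambda_0$ outside the closed disk. Thus $g$ has exactly one simple zero in $B_\varepsilon(\lambda_0)$ and does not vanish on its boundary.

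The key step is a lower bound for $|g|$ on the circle. Write $\lambda=\lambda_0+\tilde\lambda$ with $|\tilde\lambda|=\varepsilon$. Then
$$
g(\lambda)=\tilde\lambda\,(2\lambda_0+\tilde\lambda),
$$
and the reverse triangle inequality gives
$$
|g(\lambda)|=\varepsilon\,|2\lambda_0+\tilde\lambda|\geq \varepsilon(2\lambda_0-\varepsilon)>0 .
$$
Combined with hypothesis~\eqref{rouche_condition}, this yields $|r(\lambda)|<\varepsilon(2\lambda_0-\varepsilon)\leq|g(\lambda)|$ on the whole circle. In particular $f$ has no zeros on $\partial B_\varepsilon(\lambda_0)$, and Rouché's theorem~\cite[p.~390]{remmert1991theory} shows that $f$ has exactly one zero in $B_\varepsilon(\lambda_0)$, counted with multiplicity.

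No step is a serious obstacle; the only estimate needed is the lower bound for $|g|$ on the circle. I note that the strict inequality in~\eqref{rouche_condition} is imposed only on the circle, which is compact, so the comparison holds at every boundary point and no uniformity issue arises.
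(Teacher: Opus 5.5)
Your proof is correct and takes the same approach as the paper, which gives no separate argument and simply presents the lemma as a consequence of Rouché's theorem. Your comparison of $f$ with $g(\lambda)=\lambda^2-\lambda_0^2$ supplies exactly the omitted details: the lower bound $|g|\geq\varepsilon(2\lambda_0-\varepsilon)$ on the circle, and the observation that only the root $\lambda_0$ of $g$ lies in $B_\varepsilon(\lambda_0)$.
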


By assuming that the function $r(\lambda)$  is defined in terms of a power series in~\eqref{rouche_condition},
we deduce a useful corollary from Lemma~\ref{lemma_Rouche}.

\begin{corollary}\label{cor_lemma}
Let the power series for $f(\lambda)=\lambda^2-\lambda_0^2 + \sum_{n=3}^\infty c_n \lambda^n$ be convergent for $\vert \lambda \vert <R$, $\lambda_0\in (0,R)$,
and let
\begin{equation}\label{powseries_cond}
\sum_{n=1}^\infty \vert c_{n+2}\vert \lambda_0^n (1+\varkappa)^{n+2} < \varkappa (2-\varkappa),
\end{equation}
for some $\varkappa\in \left(0,\min \{1,\frac{R}{\lambda_0} -1\} \right)$.
Then the equation $f(\lambda)=0$ has exactly one solution in the open $\varepsilon$-neighborhood of $\lambda_0$ with $\varepsilon=\varkappa \lambda_0$.
In particular, for $\varkappa=\frac{1}{2}$ and $ \lambda_0< \frac{2R}{3}$, condition~\eqref{powseries_cond} takes the form
\begin{equation}\label{powseries_simplified}
\sum_{n=1}^\infty \vert c_{n+2}\vert \left( \frac{3 \lambda_0 }{2}\right)^n < \frac{1}{3}.
\end{equation}
\end{corollary}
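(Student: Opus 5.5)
This corollary follows from Lemma~\ref{lemma_Rouche} applied with $r(\lambda)=\sum_{n\geq3}c_n\lambda^n$ and $\varepsilon=\varkappa\lambda_0$. The plan is to check three things: the holomorphy hypothesis, the bound~\eqref{rouche_condition} on the circle $\vert\lambda-\lambda_0\vert=\varepsilon$, and the specialization $\varkappa=\tfrac12$.

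\emph{Holomorphy.} The power series converges on the disc $\vert\lambda\vert<R$, so $r$ is holomorphic there. Since $\varkappa<\frac{R}{\lambda_0}-1$, every $\lambda$ with $\vert\lambda-\lambda_0\vert\leq\varepsilon$ satisfies $\vert\lambda\vert\leq(1+\varkappa)\lambda_0<R$. Hence the closed disc ${\overline B}_\varepsilon(\lambda_0)$ lies inside the domain of holomorphy. Also $\varkappa<1$ gives $0<\varepsilon<\lambda_0$, as Lemma~\ref{lemma_Rouche} requires.

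\emph{Bound on the circle.} For $\vert\tilde\lambda\vert=\varepsilon$ we have $\vert\lambda_0+\tilde\lambda\vert\leq(1+\varkappa)\lambda_0$. Reindexing with $n\mapsto n+2$,
\[
\vert r(\lambda_0+\tilde\lambda)\vert\leq\sum_{n=1}^\infty \vert c_{n+2}\vert\,\lambda_0^{n+2}(1+\varkappa)^{n+2}
=\lambda_0^2\sum_{n=1}^\infty \vert c_{n+2}\vert\,\lambda_0^{n}(1+\varkappa)^{n+2}.
\]
By~\eqref{powseries_cond} this is strictly less than $\lambda_0^2\varkappa(2-\varkappa)=\varepsilon(2\lambda_0-\varepsilon)$. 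So~\eqref{rouche_condition} holds, and Lemma~\ref{lemma_Rouche} gives exactly one zero of $f$ in $B_\varepsilon(\lambda_0)$, counted with multiplicity.

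\emph{The case $\varkappa=\tfrac12$.} The requirement $\varkappa<\frac{R}{\lambda_0}-1$ becomes $\lambda_0<\frac{2R}{3}$. Condition~\eqref{powseries_cond} reads
\[
\sum_{n\geq1}\vert c_{n+2}\vert\,\lambda_0^n\left(\tfrac32\right)^{n+2}<\tfrac34 .
\]
Dividing both sides by $(3/2)^2=9/4$ turns this into $\sum_{n\geq1}\vert c_{n+2}\vert\left(\tfrac{3\lambda_0}{2}\right)^n<\tfrac13$, which is exactly~\eqref{powseries_simplified}.

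None of these steps is a real obstacle; the only care needed is the reindexing exponent $n+2$ and confirming that the closed disc stays inside the radius of convergence.
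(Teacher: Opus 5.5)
Your proposal is correct and takes the same approach as the paper. The paper gives no separate proof and only says the corollary follows from Lemma~\ref{lemma_Rouche} with $r(\lambda)=\sum_{n\geq3}c_n\lambda^n$; your checks of holomorphy on the closed disc (via $\vert\lambda\vert\leq(1+\varkappa)\lambda_0<R$), of the circle bound through the reindexing $n\mapsto n+2$, and of the $\varkappa=\tfrac12$ specialization supply exactly the omitted details.
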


\begin{lemma}\label{lemma_eigenvalues_asympt}
Under Assumptions \eqref{A1} and \eqref{A2}, the spectrum $\sigma({\cal A})=\{ \sigma_k^+, \sigma_k^-, \sigma_{-k}^+, \sigma_{-k}^- \, \vert \, k\in \mathbb N\}$ of the operator $\cal A$ admits the following representation as $k\to \infty$:
\begin{equation}\label{eigenval_asympt}
\begin{aligned}
& \sigma_k^+ = i\omega_k + \nu_k,\\
& \sigma_k^- = i\omega_k - \overline{\nu_k},\\
& \sigma_{-k}^+ = -i\omega_k + \overline{\nu_k},\\
& \sigma_{-k}^- = -i\omega_k - \nu_k,
\end{aligned}
\end{equation}
where
\begin{equation}\label{lambda0}
\begin{aligned}
&\nu_k = \lambda^0_k  + \varepsilon_k,\\
&\lambda^0_k = \frac{\vert b_k \vert}{\sqrt{2 + \frac{b_k^2}{4\omega_k^2}+2\sum_{j\neq k} \frac{b_j^2 (\omega_k^2 + \omega_j^2)}{(\omega_k^2 - \omega_j^2)^2}}}>0,
\end{aligned}
\end{equation}
with $\varepsilon_k \in \mathbb{C}$ such that $\vert \varepsilon_k\vert < \frac{\lambda^0_k}{2}$.
In particular, ${\rm Re}\, \sigma_{\pm k}^{\pm} \to 0$ as $k\to\infty$.
\end{lemma}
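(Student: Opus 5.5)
The plan is to localize the characteristic equation~\eqref{chareq} near $\sigma=i\omega_k$ and apply Corollary~\ref{cor_lemma} to a quadratic equation in the shifted variable. Write $\sigma = i\omega_k + \lambda$ and separate the $k$-th term. Since $(\sigma-i\omega_k)^{-2}=\lambda^{-2}$, multiplying~\eqref{chareq} by $\lambda^2/(2F_k(\lambda))$ gives the equivalent equation
\[
\lambda^2 - \frac{b_k^2}{2}\,\frac{1}{F_k(\lambda)} = 0,
\]
where
\[
F_k(\lambda)=1-\frac{b_k^2}{2(\lambda+2i\omega_k)^2}-\frac12\sum_{j\neq k} b_j^2\Bigl(\frac{1}{(\lambda+i\omega_k+i\omega_j)^2}+\frac{1}{(\lambda+i\omega_k-i\omega_j)^2}\Bigr).
\]
By \eqref{A2}, the function $F_k$ is holomorphic for $|\lambda|<R:=\min\{\omega_*,2\omega_k\}$.

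At $\lambda=0$ we compute $(i\omega_k\pm i\omega_j)^{-2}=-(\omega_k\pm\omega_j)^{-2}$. Hence
\[
F_k(0)=1+\frac{b_k^2}{8\omega_k^2}+\sum_{j\neq k}\frac{b_j^2(\omega_k^2+\omega_j^2)}{(\omega_k^2-\omega_j^2)^2}>0,
\]
and therefore $(\lambda_k^0)^2=b_k^2/(2F_k(0))$, which is exactly formula~\eqref{lambda0}.

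I then set $f(\lambda)=\lambda^2-(\lambda_k^0)^2+r(\lambda)$ with
\[
r(\lambda)=\frac{b_k^2}{2}\Bigl(\frac{1}{F_k(0)}-\frac{1}{F_k(\lambda)}\Bigr),
\]
and verify the Rouché condition~\eqref{rouche_condition} of Lemma~\ref{lemma_Rouche} on the circle $|\lambda-\lambda_k^0|=\varepsilon=\lambda_k^0/2$. The main estimate is a Lipschitz bound $|F_k(\lambda)-F_k(0)|\le C|\lambda|$ for $|\lambda|\le\omega_*/2$. This holds because the derivative of each term is bounded by $2b_j^2\,\mathrm{dist}^{-3}$. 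The distances $|\lambda+i\omega_k\pm i\omega_j|$ are at least $\omega_*/2$, and the sum of $b_j^2$ is at most $\|b\|^2$, so
\[
C\le 2\cdot 2\|b\|^2(2/\omega_*)^3
\]
uniformly in $k$. Since $F_k(0)\ge1$, for small $|\lambda|$ we get $|F_k(\lambda)|\ge 1/2$. This gives
\[
|r(\lambda)|\le b_k^2 C|\lambda|\le C' b_k^2\lambda_k^0
\]
on the circle.

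The Rouché requirement is $|r|<\varepsilon(2\lambda_k^0-\varepsilon)=\tfrac34(\lambda_k^0)^2$. Since $(\lambda_k^0)^2\asymp b_k^2$ (uniformly, as $F_k(0)$ is bounded above by $1+\|b\|^2/8\omega_1^2+2\|b\|^2/\omega_*^2$), this reduces to $C''\lambda_k^0<\tfrac34$. That holds for all large $k$ because $\lambda_k^0\le|b_k|/\sqrt2\to0$ by \eqref{A1}. Thus for $k\ge k_0$ there is exactly one root $\nu_k=\lambda_k^0+\varepsilon_k$ with $|\varepsilon_k|<\lambda_k^0/2$.

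Alternatively, one can expand $1/F_k$ in a power series and check~\eqref{powseries_simplified} of Corollary~\ref{cor_lemma} with $\lambda_0=\lambda_k^0<2R/3$. The coefficient bounds would come from Cauchy estimates on the disc of radius $\omega_*/2$.

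The remaining eigenvalues near $i\omega_k$, $-i\omega_k$ come from the Hamiltonian symmetry in Lemma~\ref{lem_spectrum_balls}. The map $\sigma\mapsto-\bar\sigma$ sends $i\omega_k+\nu_k$ to $i\omega_k-\bar\nu_k$, which gives $\sigma_k^-$. Conjugation gives $-i\omega_k+\bar\nu_k$, and negation gives $-i\omega_k-\nu_k$. These are distinct roots since $\mathrm{Re}\,\nu_k>\lambda_k^0/2>0$.

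I would also run the same argument in the reflected disc around $-\lambda_k^0$, where $f$ has the same form. This shows that near $i\omega_k$ there are exactly two roots, so the labelling is exhaustive. Together with item 3) of Lemma~\ref{lem_spectrum_balls} and Lemma~\ref{lemma_resolventA}, only these roots occur for large $k$; finitely many low modes are just enumerated.

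Finally, $|\mathrm{Re}\,\sigma_{\pm k}^\pm|=\mathrm{Re}\,\nu_k\le\tfrac32\lambda_k^0\to0$. The main obstacle is the uniform-in-$k$ control of $F_k(\lambda)-F_k(0)$, i.e.\ the near-resonant terms $j=k\pm1$. This is handled by the gap \eqref{A2} and the restriction $|\lambda|\le\omega_*/2$.
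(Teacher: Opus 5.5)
Your proposal is correct and follows the paper's proof: substitute $\sigma=i\omega_k+\lambda$, multiply by $\lambda^2$ to identify $(\lambda_k^0)^2=b_k^2/(2F_k(0))$, apply Lemma~\ref{lemma_Rouche} on the disc of radius $\lambda_k^0/2$ about $\lambda_k^0$ for large $k$ using $b_k\to0$, and obtain $\sigma_k^-,\sigma_{-k}^\pm$ from the Hamiltonian symmetry, the only difference being that you bound the remainder by a uniform Lipschitz estimate on $F_k$ rather than by the power-series bounds $S_1+S_2+S_3$ of Corollary~\ref{cor_lemma}. Your additional exhaustiveness claim, which the paper does not attempt, does not follow from item~3) of Lemma~\ref{lem_spectrum_balls}, because those balls have fixed radius $\|b\|$ while your Rouch\'e discs shrink with $\lambda_k^0$; it would require, for example, a zero count for $2\lambda^2F_k(\lambda)-b_k^2$ on $|\lambda|=\omega_*/2$ using $\sup_{|\lambda|\le\omega_*/2}|F_k(\lambda)-1|\to0$ (which needs $b_j\to0$ for $j$ near $k$, not just your Lipschitz bound), together with the exclusion of roots at distance at least $\omega_*/2$ from all $\pm i\omega_j$ when $|{\rm Im}\,\sigma|$ is large.
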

\begin{proof}
Substituting $\sigma = i\omega_k + \lambda$ into~\eqref{chareq}, we get
$$
\begin{aligned}
&\frac{b_k^2}{\lambda^2} + \frac{b_k^2}{(\lambda+2i\omega_k)^2} \\
&+ \sum_{j\neq k}b_j^2\left(\frac{1}{(\lambda+i(\omega_k+\omega_j))^2} + \frac{1}{(\lambda+i(\omega_k-\omega_j))^2} \right) =2.
\end{aligned}
$$
Multiplying this equation by $\lambda^2$ yields
$$
f(\lambda)\equiv \lambda^2 -(\lambda^0_k)^2 + r(\lambda)=0,
$$
where $\lambda^0_k$ is defined by~\eqref{lambda0} and $r(\lambda)= \sum_{n=3}^\infty c_n \lambda^n$,
\begin{equation}
\begin{aligned}
\frac{r(\lambda)}{(\lambda^0_k)^2 \lambda^3} =& - \frac{\lambda+ 4i\omega_k}{4\omega_k^2 (\lambda+2i\omega_k)^2} 
- \sum_{j\neq k}\frac{b_j^2(\lambda+2i(\omega_k+\omega_j))}{b_k^2 (\omega_k+\omega_j)^2 (\lambda+i (\omega_k+\omega_j))^2}\\
& - \sum_{j\neq k}\frac{b_j^2(\lambda+2i(\omega_k-\omega_j))}{b_k^2 (\omega_k-\omega_j)^2 (\lambda+i (\omega_k-\omega_j))^2},\\
c_n=&(n-1)  i^{n-2} (\lambda^0_k)^2 \Bigl\{\frac{1}{(2\omega_k)^n} 
+ \sum_{j\neq k}\frac{b_j^2}{b_k^2} \left(\frac{1}{(\omega_k+\omega_j)^n}+ \frac{1}{(\omega_k-\omega_j)^n} \right)\Bigr\},\; n\geq 3.
\end{aligned}
\label{r_form}
\end{equation}
Under Assumptions~\eqref{A1} and \eqref{A2}, the remainder $r(\lambda)$ is a holomorphic function of $\lambda$ in a disk $\vert \lambda\vert <R$ such that the radius of convergence $R>0$ is the distance from $\lambda=0$ to the nearest singularity of $r(\lambda)$~\cite[Chap.~7, \S 3]{remmert1991theory}, i.e. $R=\min\{R^*,R_*\}$,
\begin{equation}
R^*=\min_{j\neq k}\{\omega_k+\omega_j\},\; R_*=\min_{j\neq k} \{\vert \omega_k - \omega_j \vert \}\geq \omega_*>0.
\label{radius}
\end{equation}

Formula~\eqref{lambda0} implies that
\begin{equation}
0 <\lambda_k^0 < \frac{|b_k|}{\sqrt{2}}.
\label{lambda0est}
\end{equation}
We use this estimate to ensure that the inequality~\eqref{powseries_cond} holds when $\lambda_0 = \lambda_k^0 $, provided that
\begin{equation}
\sum_{n=1}^\infty \vert c_{n+2}\vert \left(\frac{b_k^2(1+\varkappa)}{2}\right)^n < \frac{\varkappa(2-\varkappa)}{(1+\varkappa)^2},\; \varkappa\in (0,1).
\label{series_est1}
\end{equation}
We substitute the coefficients $c_{n+2}$ from~\eqref{r_form} to prove that the inequality~\eqref{series_est1} is satisfied when
\begin{equation}
b_k^2\sum_{n=1}^\infty (n+1) \gamma_{n+2} x^n < \frac{2\varkappa (2-\varkappa)}{(1+\varkappa)^2},\; x= \frac{b_k^2(1+\varkappa)}{2} <1,
\label{series_est2}
\end{equation}
$$
\gamma_n = \left| \frac{1}{(2\omega_k)^n} + \sum_{j\neq k}\frac{b_j^2}{b_k^2} \left(\frac{1}{(\omega_k+\omega_j)^n}+ \frac{1}{(\omega_k-\omega_j)^n} \right) \right|.
$$
We estimate the left-hand side of~\eqref{series_est2} term-by-term by exploiting the triangle inequality, Hölder's inequality, and the identity $\sum_{n=1}^\infty (n+1)\gamma_{n+2} x^n = \frac{d}{dx} \sum_{n=2}^\infty \gamma_{n+1}x^n$:
\begin{equation}
b_k^2\sum_{n=1}^\infty (n+1) \gamma_{n+2} x^n \leq S_1 + S_2 + S_3,
\label{S_formulas}
\end{equation}
$$
\begin{aligned}
S_1 = &\;b_k^2 \sum_{n=1}^\infty \frac{(n+1)x^n}{(2\omega_k)^{n+2}} = b_k^2\frac{d}{dx}\sum_{n=2}^\infty \frac{x^n}{(2\omega_k)^{n+1}} = \frac{b_k^2 x(4\omega_k - x)}{4\omega_k^2 (2\omega_k - x)^2},\\
S_2 = & \,\|b\|^2 \sum_{n=1}^\infty \frac{(n+1)x^n}{(R^*)^{n+2}} = \frac{\|b\|^2 x(2R^* - x)}{(R^*)^2 (R^* - x)^2},\\
S_3 = & \,\|b\|^2 \sum_{n=1}^\infty \frac{(n+1)x^n}{(R_*)^{n+2}} = \frac{\|b\|^2 x(2 R_* - x)}{(R_*)^2 (R_* - x)^2},
\end{aligned}
$$
where $R^*$ and $R_*$ are defined by~\eqref{radius},
% $= \min\{\omega_k-\omega_{k-1},\omega_{k+1}-\omega_k\}$.
and the inequality~\eqref{S_formulas} is strict for $x\neq 0$.
Applying the estimate~\eqref{S_formulas} to~\eqref{series_est2}, we deduce that the condition~\eqref{powseries_cond} is satisfied for some $\varkappa\in (0,1)$ if
\begin{equation}
S_1 + S_2 +S_3 \leq \frac{2\varkappa (2-\varkappa)}{(1+\varkappa)^2},\; x= \frac{b_k^2(1+\varkappa)}{2} <1.
\label{S_condition}
\end{equation}
Since the continuous functions $S_1$, $S_2$, $S_3$ vanish at $x=0$, we conclude that, for any $\varkappa\in(0,1)$,
the conditions~\eqref{S_condition} are satisfied for $x>0$ sufficiently small (i.e., for $k$ large enough) under Assumptions~\eqref{A1} and \eqref{A2}.\

%with the radius of convergence defined by the Cauchy--Hadamard theorem:
%{\footnotesize
%$$
%\frac{1}{R} = \limsup_{n\to \infty} \left| \frac{1}{(2\omega_k)^n} + \sum_{j\neq k}\frac{b_j^2}{b_k^2} %\left(\frac{1}{(\omega_k+\omega_j)^n}+ \frac{1}{(\omega_k-\omega_j)^n} \right) \right|^{1/n}.
%$$}
%such that $\vert r(\lambda) \vert = O(\vert \lambda\vert^3)$ as $\lambda\to 0$.
In particular, for $\varkappa=\frac12$, Corollary~\ref{cor_lemma} implies that there exists a unique root of $f(\lambda)=0$ in the $\frac{\lambda^0_k}{2}$-neighborhood of $\lambda^0_k$. This proves that $\sigma_k^+ = i\omega_k + \lambda^0_k + \varepsilon_k$ is a solution of equation~\eqref{chareq}  with some $\vert \varepsilon_k \vert < \frac{\lambda^0_k}{2}$. The eigenvalues $\sigma_k^-$, $\sigma_{-k}^+$, and $\sigma_{-k}^-$ are obtained from $\sigma_k^+ $ by exploiting the symmetry of $\sigma ({\cal A})$ (Lemma~\ref{lem_spectrum_balls}).
\end{proof}

%\textcolor{red}{
%\begin{remark}\label{rem_lambda_asympt}
%By analyzing the asymptotic behavior of $\lambda_k^0$ in~\eqref{lambda0} under the assumptions of Lemma~\ref{lemma_eigenvalues_asympt}, we observe that
%$$
%\lambda_k^0 \sim \vert b_k\vert \qquad \text{as } k\to+\infty.
%$$
%\end{remark}}

\begin{remark}\label{rem_lambda_asympt}
Under (A1)-(A2), one has
$$
\sum_{j\neq k}\vert b_j\vert^2\frac{\omega_k^2+\omega_j^2}{(\omega_k^2-\omega_j^2)^2}\to 0
\qquad\text{as }k\to+\infty,
$$
hence
$$
\lambda_k^0=\frac{\vert b_k\vert}{\sqrt{2}}\big. \big(1+\mathrm{o}(1)\big),
\qquad k\to+\infty.
$$
In particular, $\lambda_k^0\asymp\vert b_k\vert$, and since $\vert\varepsilon_k\vert\leq\frac{1}{2}\lambda_k^0$ in Lemma~\ref{lemma_eigenvalues_asympt},
one also has $\Re\nu_k\asymp\vert b_k\vert$.
\end{remark}

\medskip

The following lemma states that every generalized eigenvector of $\cal A$ is actually an eigenvector. In less formal terms, this means that ``the Jordan canonical form'' of ${\cal A}$, when considered as an infinite matrix, does not contain any Jordan blocks of size greater than one.
\begin{lemma}\label{statement_A}
Under the assumptions of Lemma~\ref{lemma_eigenvalues_asympt}, let $\hat \sigma\in\mathbb C$ be an eigenvalue of $\cal A$ of the form~\eqref{eigenval_asympt}.
Then the eigenvalue $\hat \sigma$ has algebraic multiplicity $1$.
\end{lemma}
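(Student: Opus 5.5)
To prove that $\hat\sigma$ has algebraic multiplicity $1$, I will check two things: the geometric multiplicity is one, and there is no Jordan chain of length two. Geometric simplicity is immediate from the computations leading to~\eqref{theta_phi}. Since $\hat\sigma\notin\{\pm i\omega_j\}$, every eigenvector is determined by the single scalar $\phi$, namely $\theta_{\hat\sigma}=\phi\,(z,\zeta,p,q)$ with the components given by~\eqref{theta_phi}. Hence $\ker({\cal A}-\hat\sigma I)$ is spanned by the vector $\theta_0$ obtained with $\phi=1$.

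The remaining step is to exclude a generalized eigenvector. Suppose $({\cal A}-\hat\sigma I)\theta_1=\theta_0$ with $\theta_1\in D({\cal A})$. I will write this componentwise as in~\eqref{resolvent_eq}, taking $\lambda=\hat\sigma$ and $\tilde\theta=\theta_0$. Solving for $\theta_1$ with the scalar $\phi_1=\frac12 b^*(p_1-q_1)$ treated as a parameter gives formulas of the form~\eqref{resolvent_comp}. Substituting these back into the definition of $\phi_1$, the coefficient of $\phi_1$ becomes $b^*(\hat\sigma I+i\Omega)^{-2}b+b^*(\hat\sigma I-i\Omega)^{-2}b-2$. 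This coefficient vanishes because $\hat\sigma$ is a root of~\eqref{chareq}. The relation therefore reduces to a solvability condition on $\theta_0$ alone. A direct computation will show that this condition is exactly $G'(\hat\sigma)=0$, where
\[
G(\sigma)=\sum_{k=1}^\infty b_k^2\Bigl(\frac{1}{(\sigma+i\omega_k)^2}+\frac{1}{(\sigma-i\omega_k)^2}\Bigr)-2 .
\]
The computation uses the identities $(\sigma+i\Omega)^{-3}$ and $(\sigma-i\Omega)^{-3}$ that arise from composing the resolvent pieces, which carry the components of $\theta_0$. So a Jordan chain exists if and only if $\hat\sigma$ is a multiple root of $G$. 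This is the standard fact that algebraic multiplicity equals the order of the zero of the characteristic function. I will verify the equivalence by direct substitution rather than by appealing to a general theorem. The bookkeeping of signs between the $z,p$ and $\zeta,q$ blocks is where errors are likely, so I will track them carefully.

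It then remains to show $G'(\hat\sigma)\neq0$ for the eigenvalues~\eqref{eigenval_asympt}. By symmetry (Lemma~\ref{lem_spectrum_balls}) it suffices to treat $\hat\sigma=\sigma_k^+=i\omega_k+\nu_k$. In the proof of Lemma~\ref{lemma_eigenvalues_asympt}, $G(i\omega_k+\lambda)=0$ was rewritten as $f(\lambda)=\lambda^2-(\lambda_k^0)^2+r(\lambda)=0$, multiplied by a nonzero factor. Near $\lambda=\nu_k\neq0$ the multiplying factor is $\lambda^2$ up to a constant, so simple zeros of $f$ correspond to simple zeros of $G$. Corollary~\ref{cor_lemma} (via Rouché, Lemma~\ref{lemma_Rouche}) gives exactly one zero of $f$ in $B_{\lambda_k^0/2}(\lambda_k^0)$, counted with multiplicity. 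Hence $\nu_k$ is a simple zero of $f$, so $G'(\hat\sigma)\neq0$ and no Jordan chain exists.

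The main obstacle is the algebraic identification of the Jordan-chain solvability condition with $G'(\hat\sigma)=0$. In particular, I must check that the components of $\theta_1$ lie in $\ell^2$. This holds because $\hat\sigma$ stays at positive distance from $\{\pm i\omega_j\}$, so all resolvent factors are bounded diagonal operators.
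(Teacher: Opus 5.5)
Your proof is correct, but it reaches the conclusion by a different route than the paper. The paper argues through the spectral (Riesz) projection. By the explicit resolvent formulas \eqref{resolvent_comp}--\eqref{phi_func} of Lemma~\ref{lemma_resolventA}, near $\hat\sigma$ the resolvent is a holomorphic part plus the term $\tilde\phi(\tilde\theta)\,\theta(\lambda)$. The only singular factor of that term is $1/G(\lambda)$, since the denominator in \eqref{phi_func} is exactly your $G$. A simple zero of $G$ therefore gives a simple pole with rank-one residue.

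You work instead with generalized eigenvectors. You get geometric simplicity from \eqref{theta_phi}, and you show that the Jordan-chain equation $(\mathcal A-\hat\sigma I)\theta_1=\theta_0$ is solvable if and only if $\sum_j b_j^2\bigl((\hat\sigma+i\omega_j)^{-3}+(\hat\sigma-i\omega_j)^{-3}\bigr)=0$, i.e.\ $G'(\hat\sigma)=0$. This computation checks out:
\[
p_1=\phi_1(\hat\sigma I+i\Omega)^{-2}b-2(\hat\sigma I+i\Omega)^{-3}b,\qquad
q_1=-\phi_1(\hat\sigma I-i\Omega)^{-2}b+2(\hat\sigma I-i\Omega)^{-3}b,
\]
and the coefficient of $\phi_1$ is $-\frac12 G(\hat\sigma)=0$. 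Your route is more elementary and makes ``multiplicity equals order of the zero'' explicit. It also justifies what the paper only asserts, namely that $\hat\sigma$ is a simple root. Indeed $f(\lambda)=-(\lambda_k^0/b_k)^2\lambda^2G(i\omega_k+\lambda)$, and Rouch\'e counts zeros with multiplicity. This holds in exactly the large-$k$ range where Lemma~\ref{lemma_eigenvalues_asympt} yields the form \eqref{eigenval_asympt}.

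What the paper's route gives directly is the rank of the Riesz projection. That is the standard meaning of the algebraic multiplicity of an isolated eigenvalue, and it is what feeds the root-subspace argument in Lemma~\ref{statement_B}. In infinite dimensions, ruling out Jordan chains of length two only gives $\ker(\mathcal A-\hat\sigma I)^2=\ker(\mathcal A-\hat\sigma I)$. This alone does not exclude an injective quasinilpotent part of infinite rank in the range of the projection.

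One line closes this. The resolvent $R(\lambda;\mathcal A)$ is compact, because it is built from the diagonal operators $(\lambda I\pm i\Omega)^{-1}$, whose entries tend to zero, together with rank-one terms. Alternatively, $\hat\sigma$ is a pole by \eqref{phi_func}. Either way, the range of the Riesz projection is $\ker(\mathcal A-\hat\sigma I)^m$ for some finite $m$. By your argument this equals the one-dimensional space $\ker(\mathcal A-\hat\sigma I)$.
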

\begin{proof}
The assertion of this lemma follows from the construction of the spectral projection of $\cal A$ corresponding to the isolated spectral point $\hat \sigma$ and the fact that $\hat \sigma$ is a simple root of the characteristic equation~\eqref{chareq}.
\end{proof}

\subsection{Construction of a Riesz basis}\label{sec_Riesz}

In this subsection, we construct a Riesz basis of $X$ consisting of pairwise linear combinations of eigenvectors of $\cal A$.
For a complex number $\sigma\notin \{\pm i\omega_1, \pm i\omega_2, ...\}$, we denote the vector $\theta(\sigma)\in X$ whose components are defined by~\eqref{theta_phi} with $\phi=1$:
\begin{equation}\label{theta_sigma}
\theta(\sigma)=\begin{pmatrix}z \\ \zeta \\ p \\ q\end{pmatrix} = \begin{pmatrix} (\sigma I + i\Omega)^{-1}b \\ - (\sigma I - i\Omega)^{-1}b \\ (\sigma I + i\Omega)^{-2}b \\ -(\sigma I - i\Omega)^{-2}b \end{pmatrix}.
\end{equation}
It follows from the results of Section~\ref{sec_spectrum} that
\begin{equation}
\theta_k^+=\theta(\sigma_k^+),\, \theta_k^-=\theta(\sigma_k^-),\,\theta_{-k}^+ = \theta(\sigma_{-k}^+),\,  \theta_{-k}^- = \theta(\sigma_{-k}^-)
\label{theta_eig}
\end{equation}
are  eigenvectors of $\cal A$,
where $\sigma_k^+$, $\sigma_k^-$, $\sigma_{-k}^+ $,  $\sigma_{-k}^-$ are the eigenvalues given by~\eqref{eigenval_asympt} in Lemma~\ref{lemma_eigenvalues_asympt}. The sequence of eigenvectors of $\cal A$ is complete in $X$, as summarized in the following lemma.

\begin{lemma}\label{statement_B}
Under Assumptions~\eqref{A1} and \eqref{A2}, the closed linear span of all
eigenvectors of $\cal A$ coincides with $X$, i.e.
$$
\overline{\rm span}\left\{\theta_k^+, \theta_k^-, \theta_{-k}^+, \theta_{-k}^- \,\vert \, k\in{\mathbb N}\right\} = X.
$$
\end{lemma}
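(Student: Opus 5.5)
Since the eigenvalues of ${\cal A}$ given by Lemma~\ref{lemma_eigenvalues_asympt} are contained in the disjoint small discs around $\pm i\omega_k$, they are isolated. Lemma~\ref{lemma_resolventA} shows that ${\cal A}$ has only point spectrum. By Lemma~\ref{statement_A}, every eigenvalue is algebraically simple. The natural route to completeness is therefore duality: take $\psi\in X$ orthogonal to every eigenvector and show that $\psi=0$.

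\textbf{Step 1: rewrite orthogonality as an analytic condition.} Write $\psi=(\psi^z,\psi^\zeta,\psi^p,\psi^q)$ and define, for $\sigma\notin\{\pm i\omega_k\}$,
$$
F(\sigma)=\langle \theta(\sigma),\psi\rangle_X
=\sum_{k}b_k\Bigl(\frac{\overline{\psi^z_k}}{\sigma+i\omega_k}-\frac{\overline{\psi^\zeta_k}}{\sigma-i\omega_k}+\frac{\overline{\psi^p_k}}{(\sigma+i\omega_k)^2}-\frac{\overline{\psi^q_k}}{(\sigma-i\omega_k)^2}\Bigr),
$$
where $\theta(\sigma)$ is given by~\eqref{theta_sigma}. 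This $F$ is meromorphic, with at most double poles at $\pm i\omega_k$. Similarly, let
$$
D(\sigma)=\tfrac12\sum_k b_k^2\bigl((\sigma+i\omega_k)^{-2}+(\sigma-i\omega_k)^{-2}\bigr)-1,
$$
whose zeros are exactly $\sigma_p({\cal A})$ by~\eqref{chareq}, and these zeros are simple by Lemma~\ref{statement_A}. The hypothesis says that $F$ vanishes at every zero of $D$. Hence $G=F/D$ is meromorphic with poles only possibly at $\pm i\omega_k$. Near $\pm i\omega_k$, $D$ itself has a double pole with coefficient $b_k^2/2\neq0$ by~\eqref{A1}, so $G$ is in fact holomorphic there. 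Consequently $G$ is entire.

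\textbf{Step 2: growth of $G$.} Away from discs of fixed radius around the points $\pm i\omega_k$, the gap condition~\eqref{A2} and $b\in\ell^2$ give $|F(\sigma)|\le C$ by Cauchy--Schwarz, and $F\to0$ as $|\Re\sigma|\to\infty$. Likewise $D(\sigma)\to-1$, and $|D|$ is bounded below on suitable contours. I would choose circles $|\sigma|=r_n$ with $r_n$ midway between consecutive $\omega_k$, together with the regions $|\Re\sigma|$ large, and show that $|D|\ge c>0$ on these contours. This is exactly the same tail estimate as in the proof of Lemma~\ref{lemma_eigenvalues_asympt}: near the imaginary axis, the term $b_k^2$ of the near mode is small and the far sum is small. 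On these contours $G$ is bounded, and $G\to0$ along the real axis. The maximum principle then makes $G$ bounded, Liouville makes it constant, and the constant is $0$. Thus $F\equiv0$.

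\textbf{Step 3: conclude $\psi=0$.} From $F\equiv0$, the principal parts at each pole vanish. At $\sigma=-i\omega_k$, the coefficient of $(\sigma+i\omega_k)^{-2}$ is $b_k\overline{\psi^p_k}$ and that of $(\sigma+i\omega_k)^{-1}$ is $b_k\overline{\psi^z_k}$. Since $b_k\ne0$, this gives $\psi^p_k=\psi^z_k=0$, and the analogous computation at $+i\omega_k$ gives $\psi^\zeta_k=\psi^q_k=0$. Hence $\psi=0$. This argument uses only the eigenvectors~\eqref{theta_eig} for the eigenvalues described in Lemma~\ref{lemma_eigenvalues_asympt}. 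Finitely many low-frequency eigenvalues not of that asymptotic form are handled the same way, provided they are simple. If one is not simple, I would multiply $D$ by a polynomial factor and include the generalized eigenvectors $\partial_\sigma^m\theta(\sigma)$, which lie in the same span.

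\textbf{Main obstacle.} The hardest part is the uniform lower bound $|D(\sigma)|\ge c$ on an exhausting family of contours, carried out uniformly in $k$. I expect the cleanest way is to pass between the discs ${\cal B}_k^\pm$ of Lemma~\ref{lem_spectrum_balls} using horizontal segments $\Im\sigma=\pm(\omega_k+\omega_{k+1})/2$. On these segments every term of $D$ is $O(\|b\|^2/\omega_*^2)$ from the tail. For the first few modes, where $\|b\|$ might not be small relative to the gaps, I would absorb them by a compactness argument on a large but fixed disc.
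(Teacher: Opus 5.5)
Your proof is correct, provided every eigenvalue is simple (see below), and it takes a genuinely different route from the paper. The paper argues abstractly. It observes that ${\cal A}$ is closed, densely defined, and has spectrum consisting of isolated eigenvalues, invokes Lemmas~2.21--2.22 of \cite{guo2019control} to get completeness of root vectors, and then uses Lemma~\ref{statement_A} to identify root vectors with eigenvectors.

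You instead exploit the rank-one structure directly:
\begin{itemize}
\item orthogonality of $\psi$ to all eigenvectors makes $G=F/D$ entire;
\item boundedness, Liouville, and $G\to0$ on the real axis give $F\equiv0$;
\item the principal parts of $F$ at $\mp i\omega_k$, together with $b_k\neq0$, kill $\psi$ mode by mode.
\end{itemize}
The paper's route is shorter and portable, but it leaves implicit the control of the resolvent at infinity that any completeness theorem needs. Discreteness of the spectrum alone is not enough: the inverse of the Volterra operator has compact resolvent and empty spectrum. Your route is self-contained, supplies that control explicitly in Step~2, and shows exactly where \eqref{A1} and \eqref{A2} enter.

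What you call the main obstacle is easy and needs no compactness step. For $|\sigma|=r_n=(\omega_n+\omega_{n+1})/2$ one has $|\sigma\mp i\omega_k|\geq|r_n-\omega_k|\geq|k-n-\tfrac12|\,\omega_*$. Hence $|D(\sigma)+1|\leq\sum_k b_k^2/(r_n-\omega_k)^2\to0$ by dominated convergence, since $b_k\to0$. On the same circles $|F|$ stays bounded by Cauchy--Schwarz. So $G$ is uniformly bounded on circles of radii tending to infinity.

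Two remarks on the simplicity input.

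First, the step ``the zeros of $D$ are simple by Lemma~\ref{statement_A}'' deserves one line. A direct computation gives $({\cal A}-\sigma)\theta(\sigma)=D(\sigma)\,(b,-b,0,0)^\top$. If $D(\hat\sigma)=D'(\hat\sigma)=0$, differentiating yields $({\cal A}-\hat\sigma)\theta'(\hat\sigma)=\theta(\hat\sigma)$, which is a Jordan chain. So algebraic multiplicity one forces a simple zero of $D$.

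Second, the same identity shows that your fallback for a non-simple eigenvalue is wrong. In that case $\theta'(\hat\sigma)$ is a genuine generalized eigenvector, and it is \emph{not} in the closed span of the eigenvectors. The Riesz projection at $\hat\sigma$ annihilates every other eigenvector, and the eigenspace at $\hat\sigma$ is one-dimensional (it is parametrized by $\phi$). So the projection maps that closed span into ${\rm span}\{\theta(\hat\sigma)\}$, while fixing $\theta'(\hat\sigma)$. The lemma itself would then fail. Simplicity of every eigenvalue, the low-frequency ones included, is therefore an indispensable hypothesis for your argument and the paper's alike; no polynomial correction can bypass it.

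Note that simplicity is not automatic under \eqref{A1}--\eqref{A2}. On the real axis, $D(\sigma)=\sum_k b_k^2(\sigma^2-\omega_k^2)/(\sigma^2+\omega_k^2)^2-1$. With one dominant mode ($b_1^2$ near $8\omega_1^2$, the other $b_k$ small and nonzero), one can tune $b_1$ so that $D$ has a double real zero near $\sqrt3\,\omega_1$.
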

\begin{proof}
The linear operator ${\cal A}:D({\cal A})\to X$ is densely defined and closed in the Hilbert space $X$,
and its spectrum consists solely of isolated eigenvalues: $\sigma({\cal A})=\sigma_p({\cal A})=\{ \sigma_k^+, \sigma_k^-, \sigma_{-k}^+, \sigma_{-k}^- \, \vert \, k\in \mathbb N\}$.
Consequently, based on Lemmas~2.21 and 2.22 in~\cite{guo2019control}, it follows that ${\rm sp}({\cal A})=X$,
where ${\rm sp}(\cal A)$ is the root subspace of {\cal A} (i.e., the closed linear span of all generalized
eigenvectors of $\cal A$).
Owing to Lemma~\ref{statement_A}, each generalized eigenvector of $\cal A$ is also an eigenvector, thereby proving the assertion of Lemma~\ref{statement_B}.
\end{proof}

Let us consider the ``standard'' orthonormal basis $\{ \hat\theta_k^z, \hat\theta_k^\zeta ,\hat\theta_k^p, \hat\theta_k^q\}_{k=1}^\infty$ in $X$.
By standard, we mean that each vector $\hat\theta_k^z, \hat\theta_k^\zeta ,\hat\theta_k^p, \hat\theta_k^q$ has exactly one coordinate equal to $1$, while the other coordinates are zero.
More precisely, we introduce the linear operators $\Pi_k:X\to {\mathbb C}^4$ which map $\theta$-vectors onto the subspace spanned by the coordinates $(z_k,\theta_k,p_q,q_k)^\top\in {\mathbb C}^4$:
$$
\Pi_k \theta = \begin{pmatrix}z_k \\ \zeta_k \\ p_k \\ q_k\end{pmatrix},\quad k\in{\mathbb N}.
$$
Then, for the standard orthonormal basis, we have
$$
\Pi_k \hat\theta_k^z= \begin{pmatrix}1\\0\\0\\0\end{pmatrix},
\Pi_k \hat\theta_k^\zeta= \begin{pmatrix}0\\1\\0\\0\end{pmatrix},
\Pi_k \hat\theta_k^p= \begin{pmatrix}0\\0\\1\\0\end{pmatrix},
\Pi_k \hat\theta_k^q= \begin{pmatrix}0\\0\\0\\1\end{pmatrix},%\quad k\in{\mathbb N},
$$
and $\Pi_n \hat\theta_k^z=\Pi_n \hat\theta_k^\zeta=\Pi_n \hat\theta_k^p=\Pi_n \hat\theta_k^q=0\in{\mathbb C}^4$ for all $n\neq k$.

\begin{lemma}\label{lem_quadratic}
Under Assumptions~\eqref{A1}, \eqref{A2}, and~\eqref{A4},
let the vectors  $\{\theta_k^z, \theta_k^\zeta, \theta_k^p, \theta_k^q \}_{k=1}^\infty$ be defined by
\begin{equation}\label{Riesz_basis}
\begin{aligned}
& \theta_k^z = \frac{1}{2 b_k {\rm Re}\, \nu_k} \left( {\overline\nu_k}^2\theta_{-k}^+ - {\nu_k}^2\theta_{-k}^- \right), \\
& \theta_k^\zeta = -\frac{1}{2 b_k {\rm Re}\, \nu_k} \left({\nu_k}^2  \theta_{k}^+ - {\overline\nu_k}^2 \theta_{k}^-   \right), \\
& \theta_k^p = \frac{1}{2 b_k} \left( {\overline\nu_k}^2\theta_{-k}^+ + {\nu_k}^2 \theta_{-k}^- \right), \\
& \theta_k^q = -\frac{1}{2 b_k} \left( {\nu_k}^2\theta_{k}^+ + {\overline\nu_k}^2 \theta_{k}^- \right),\;k\in{\mathbb N},
\end{aligned}
\end{equation}
where the $\nu_k$ are defined by~\eqref{lambda0} in Lemma~\ref{lemma_eigenvalues_asympt}.

Then the sequence $\{\theta_k^z, \theta_k^\zeta, \theta_k^p, \theta_k^q \}_{k=1}^\infty$ is quadratically close to the standard orthonormal basis $\{ \hat\theta_k^z, \hat\theta_k^\zeta ,\hat\theta_k^p, \hat\theta_k^q\}_{k=1}^\infty$ in $X$, i.e.
\begin{equation}
\sum_{k=1}^\infty \bigl(  \|\theta_k^z-\hat\theta_k^z\|^2 + \|\theta_k^\zeta-\hat\theta_k^\zeta\|^2 
+\|\theta_k^p-\hat\theta_k^p\|^2 +\|\theta_k^q-\hat\theta_k^q\|^2  \bigr) <\infty.
\label{quadclose}
\end{equation}
\end{lemma}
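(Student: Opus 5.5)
The plan is to compute every coordinate of the four vectors in \eqref{Riesz_basis} explicitly from \eqref{theta_sigma}. I will then compare the diagonal block $\Pi_k$ with the corresponding standard vectors and bound the off-diagonal blocks $\Pi_j$, $j\neq k$. For the finitely many indices $k$ for which the asymptotic description of Lemma~\ref{lemma_eigenvalues_asympt} is not yet in force, nothing needs to be estimated. Each vector is a finite combination of elements of $X$, and $\Re\nu_k\neq0$ by item 2) of Lemma~\ref{lem_spectrum_balls}, so these indices contribute a finite amount to \eqref{quadclose}. It therefore suffices to treat $k\geq k_0$, where $k_0$ is chosen so that $|\nu_k|\leq \omega_*/2$. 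This is possible because $|\nu_k|\leq \tfrac32\lambda_k^0<|b_k|\to0$. I will also use Remark~\ref{rem_lambda_asympt}, which gives $\Re\nu_k\asymp|b_k|$ and $|\nu_k|\lesssim|b_k|$. Consequently all coefficients in \eqref{Riesz_basis} satisfy
$$\frac{|\nu_k|^2}{|b_k|\,\Re\nu_k}\lesssim 1,\qquad \frac{|\nu_k|^2}{|b_k|}\lesssim |b_k|,$$
uniformly in $k\geq k_0$.

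\emph{Diagonal block.} Since $\sigma_{-k}^+ +i\omega_k=\overline{\nu_k}$ and $\sigma_{-k}^- +i\omega_k=-\nu_k$, the $z_k$-coordinate of $\theta_k^z$ equals $\frac{1}{2b_k\Re\nu_k}\bigl(\overline{\nu_k}^2 b_k/\overline{\nu_k}+\nu_k^2 b_k/\nu_k\bigr)=1$. Its $p_k$-coordinate is $\frac{1}{2b_k\Re\nu_k}\bigl(b_k-b_k\bigr)=0$. In the same way, $\theta_k^p$ has $p_k$-coordinate $1$ and $z_k$-coordinate $(\overline{\nu_k}-\nu_k)/2=-i\,{\rm Im}\,\nu_k$, which is $O(|b_k|)$. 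The $\zeta_k$- and $q_k$-coordinates involve denominators $\sigma_{-k}^\pm-i\omega_k=-2i\omega_k+O(|b_k|)$ and their squares. With the coefficient bounds above they are $O(|b_k|^2/\omega_k)$ or smaller. The vectors $\theta_k^\zeta,\theta_k^q$ are handled symmetrically, using $\sigma_k^\pm$ and the factors $\sigma_k^\pm-i\omega_k=\nu_k,-\overline{\nu_k}$. Altogether $\|\Pi_k(\theta_k^\ast-\hat\theta_k^\ast)\|^2\lesssim b_k^2$, which is summable by \eqref{A1}.

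\emph{Off-diagonal blocks.} For $j\neq k$, the $z_j$-coordinate of $\theta(\sigma_{-k}^\pm)$ is $b_j/(i(\omega_j-\omega_k)+O(\nu_k))$. Because $|\nu_k|\leq\omega_*/2\leq|\omega_j-\omega_k|/2$, this is bounded in modulus by $2|b_j|/|\omega_j-\omega_k|$. The $\zeta_j$-coordinates have denominators of size at least $(\omega_j+\omega_k)/2\geq\omega_1/2$, and the $p_j,q_j$-coordinates carry the squares of these denominators. I will not try to exploit cancellations between the two terms in each combination; a crude triangle inequality suffices. Multiplying by the coefficient bound ($\lesssim 1$ for $\theta_k^z,\theta_k^\zeta$ after one factor $b_k$ is absorbed, and $\lesssim|b_k|$ for $\theta_k^p,\theta_k^q$) gives
$$\sum_{j\neq k}\|\Pi_j\theta_k^\ast\|^2\lesssim b_k^2\sum_{j\neq k}\frac{b_j^2}{(\omega_j-\omega_k)^2}+b_k^2\sum_{j\neq k}\frac{b_j^2}{(\omega_j+\omega_k)^2}.$$
In the first term, $b_k^2$ arises as $|b_k|\cdot\frac{|\nu_k|^2}{|b_k|\Re\nu_k}$, since each eigenvector coordinate already carries $b_j$ rather than $b_k$. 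Bounding $b_k^2\leq\|b\|_{\ell^\infty}^2$ and summing over $k$, the first term is finite by \eqref{A4}. The second term is at most $\|b\|^2_{\ell^\infty}\|b\|^2_{\ell^2}\sum_k\omega_k^{-2}$. This is finite by \eqref{A2}, since $\omega_k\geq\omega_*k$.

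The main obstacle is the coefficient $\frac{\nu_k^2}{2b_k\Re\nu_k}$ in $\theta_k^z,\theta_k^\zeta$, which must stay bounded uniformly in $k$. This is exactly where the lower bound $\Re\nu_k\gtrsim|b_k|$ from Lemma~\ref{lemma_eigenvalues_asympt}, namely $|\varepsilon_k|<\lambda^0_k/2$, together with Remark~\ref{rem_lambda_asympt} is essential. If $\Re\nu_k$ were allowed to be much smaller than $|\nu_k|$, the $z$-type vectors would blow up and quadratic closeness would fail. I expect the remaining steps to be routine bookkeeping with the triangle inequality.
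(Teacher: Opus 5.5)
Your proposal is correct and follows essentially the same route as the paper. You split \eqref{quadclose} into the diagonal blocks $\Pi_k$ and the off-diagonal blocks $\Pi_j$, $j\neq k$, compute the components of \eqref{Riesz_basis} explicitly from \eqref{theta_sigma}, and close with $|\nu_k|\lesssim|b_k|$ and $|\nu_k^2/(b_k\Re\nu_k)|\lesssim 1$ (the paper's \eqref{nu_est}--\eqref{kappa_est}) together with \eqref{A1}, \eqref{A2}, \eqref{A4}.

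A few bookkeeping slips do not affect the result:
\begin{itemize}
\item For $\theta_k^z,\theta_k^\zeta$ the crude triangle inequality gives $\|\Pi_j\theta_k^s\|\lesssim|b_j|/|\omega_j-\omega_k|$ with no $b_k$ prefactor. Likewise the diagonal $\zeta_k$-entry is only $O(|b_k|/\omega_k)$. You discard the $b_k^2$ via $b_k^2\le\|b\|_{\ell^\infty}^2$ anyway, so \eqref{A4} still gives the conclusion.
\item You need $\tfrac32\lambda_k^0<\tfrac{3}{2\sqrt2}|b_k|$, not $<|b_k|$.
\item The correct lower bound is $\omega_k\geq\omega_1+(k-1)\omega_*$, not $\omega_*k$; it still gives $\sum_k\omega_k^{-2}<\infty$.
\end{itemize}
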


\begin{proof}
We rewrite the above sum of squares using the operators $\Pi_k$:
\begin{equation}
\sum_{k=1}^\infty \|\theta_k^s-\hat\theta_k^s\|^2 = \sum_{k=1}^\infty \sum_{j=1}^\infty \|\Pi_j (\theta_k^s-\hat\theta_k^s)\|^2
= \sum_{k=1}^\infty  \|\Pi_k (\theta_k^s-\hat\theta_k^s)\|^2 + \sum_{k=1}^\infty \sum_{j\neq k} \|\Pi_j \theta_k^s\|^2,
\label{norm2_identity}
\end{equation}
where the symbol ``$s$'' represents any of the superscripts $z$, $\zeta$, $p$, $q$,
and the norm in $\mathbb C^4$ is defined as Hermitian.

To show that the two sequences are quadratically close~\cite{gohberg1978introduction}, we first observe that
$$
\Pi_k \theta_k^z = \begin{pmatrix} 1 \\  - \frac{(\Renu)^2+(4\omega_k + \Imnu)\Imnu}{(\Renu)^2 + (2\omega_k + \Imnu)^2} \\ 0 \\ -4i\omega_k \frac{2\omega_k\Imnu+\vert \nu_k \vert^2}{[(\Renu)^2 + (2\omega_k + \Imnu)^2]^2}  \end{pmatrix},
$$
$$
\Pi_k \theta_k^\zeta = \begin{pmatrix} - \frac{(\Renu)^2+(4\omega_k + \Imnu)\Imnu}{(\Renu)^2 + (2\omega_k + \Imnu)^2} \\ 1 \\ 4i\omega_k \frac{2\omega_k\Imnu+\vert \nu_k \vert^2}{[(\Renu)^2 + (2\omega_k + \Imnu)^2]^2} \\ 0\end{pmatrix},
$$
$$
\Pi_k \theta_k^p = \begin{pmatrix}  -i \Imnu \\ -i\frac{2\omega_k[(\Renu)^2-(\Imnu)^2] - \vert \nu_k\vert^2 \Imnu}{(\Renu)^2 + (2\omega_k + \Imnu)^2} \\ 1 \\ \frac{(2\omega_k (\Renu+\Imnu)+\vert \nu_k\vert^2)(2\omega_k (\Renu-\Imnu)-\vert \nu_k\vert^2)}{[(\Renu)^2 + (2\omega_k + \Imnu)^2]^2} \end{pmatrix},
$$
$$
\Pi_k \theta_k^q = \begin{pmatrix} i\frac{2\omega_k[(\Renu)^2-(\Imnu)^2] - \vert \nu_k\vert^2 \Imnu}{(\Renu)^2 + (2\omega_k + \Imnu)^2} \\ i\Imnu \\ \frac{(2\omega_k (\Renu+\Imnu)+\vert \nu_k\vert^2)(2\omega_k (\Renu-\Imnu)-\vert \nu_k\vert^2)}{[(\Renu)^2 + (2\omega_k + \Imnu)^2]^2} \\ 1\end{pmatrix}.
$$
The estimate~\eqref{lambda0est} of Lemma~\ref{lemma_eigenvalues_asympt} implies that
\begin{equation}
\vert \nu_k\vert < \sqrt{2} \vert b_k\vert.
\label{nu_est}
\end{equation}
Then, from inequality~\eqref{nu_est} and Assumptions~\eqref{A1} and \eqref{A2}, it follows that
\begin{equation}\label{k_sum}
\sum_{k=1}^\infty  \|\Pi_k (\theta_k^s-\hat\theta_k^s)\|^2 < \infty
\end{equation}
for each ``$s$'' representing the components $z$, $\zeta$, $p$, $q$.

Due to~\eqref{norm2_identity}, it remains to prove that
\begin{equation}
\sum_{k=1}^\infty \sum_{j\neq k} \|\Pi_j \theta_k^s\|^2 <\infty.
\label{sum_kj}
\end{equation}
Straightforward computations show that
$$
\Pi_j \theta_k^z = \frac{b_j}{2 b_k {\rm Re}\, \nu_k} \begin{pmatrix}
\frac{{\overline\nu_k}^2}{i(\omega_j-\omega_k)+\overline\nu_k} + \frac{\nu_k^2}{i(\omega_k-\omega_j)+\nu_k}
 \\
 \frac{{\overline\nu_k}^2}{i(\omega_j+\omega_k)-\overline\nu_k} - \frac{\nu_k^2}{i(\omega_j+\omega_k)+\nu_k}
 \\
 \frac{{\overline\nu_k}^2}{(i(\omega_j-\omega_k)+\overline\nu_k)^2} - \frac{\nu_k^2}{(i(\omega_k-\omega_j)+\nu_k)^2}
 \\
 -\frac{{\overline\nu_k}^2}{(i(\omega_j+\omega_k)-\overline\nu_k)^2} + \frac{\nu_k^2}{(i(\omega_j+\omega_k)+\nu_k)^2}
 \end{pmatrix},
$$
$$
\Pi_j \theta_k^\zeta = \frac{b_j}{2 b_k {\rm Re}\, \nu_k}\begin{pmatrix}
-\frac{\nu_k^2}{i(\omega_k+\omega_j)+\nu_k} + \frac{\overline\nu_k^2}{i(\omega_k+\omega_j)-\overline\nu_k}
 \\
 \frac{\nu_k^2}{i(\omega_k-\omega_j)+\nu_k} - \frac{\overline\nu_k^2}{i(\omega_k-\omega_j)-\overline\nu_k}
 \\
-\frac{\nu_k^2}{(i(\omega_k+\omega_j)+\nu_k)^2} + \frac{\overline\nu_k^2}{(i(\omega_k+\omega_j)-\overline\nu_k)^2}
\\
\frac{\nu_k^2}{(i(\omega_k-\omega_j)+\nu_k)^2} - \frac{\overline\nu_k^2}{(i(\omega_k-\omega_j)-\overline\nu_k)^2}
 \end{pmatrix},
$$
$$
\Pi_j \theta_k^p = \frac{b_j}{2 b_k}\begin{pmatrix}
\frac{{\overline\nu_k}^2}{i(\omega_j-\omega_k)+\overline\nu_k} + \frac{\nu_k^2}{i(\omega_j-\omega_k)-\nu_k}
\\
\frac{{\overline\nu_k}^2}{i(\omega_j+\omega_k)-\overline\nu_k} + \frac{\nu_k^2}{i(\omega_j+\omega_k)+\nu_k}
\\
\frac{{\overline\nu_k}^2}{(i(\omega_j-\omega_k)+\overline\nu_k)^2} + \frac{\nu_k^2}{(i(\omega_j-\omega_k)-\nu_k)^2}
\\
-\frac{{\overline\nu_k}^2}{(i(\omega_j+\omega_k)-\overline\nu_k)^2} - \frac{\nu_k^2}{(i(\omega_j+\omega_k)+\nu_k)^2}
 \end{pmatrix},
$$
$$
\Pi_j \theta_k^q = \frac{b_j}{2 b_k}\begin{pmatrix}
-\frac{\nu_k^2}{i(\omega_k+\omega_j)+\nu_k} - \frac{\overline\nu_k^2}{i(\omega_k+\omega_j)-\overline\nu_k}
 \\
 \frac{\nu_k^2}{i(\omega_k-\omega_j)+\nu_k} + \frac{\overline\nu_k^2}{i(\omega_k-\omega_j)-\overline\nu_k}
 \\
 -\frac{\nu_k^2}{(i(\omega_k+\omega_j)+\nu_k)^2} - \frac{\overline\nu_k^2}{(i(\omega_k+\omega_j)-\overline\nu_k)^2}
 \\
 \frac{\nu_k^2}{(i(\omega_k-\omega_j)+\nu_k)^2} + \frac{\overline\nu_k^2}{(i(\omega_k-\omega_j)-\overline\nu_k)^2}
 \end{pmatrix},\; j\neq k.
$$
As in the proof of Lemma~\ref{lemma_eigenvalues_asympt}, we consider a $\varkappa\in (0,1)$ such that
the conditions of Corollary~\ref{cor_lemma} are satisfied for $\lambda_0=\lambda^0_k$. Then, due to~\eqref{nu_est}, we obtain
\begin{equation}\label{kappa_est}
\left|\frac{\nu_k^2}{b_k \Renu}\right| < \frac{\sqrt{2}(1+\varkappa)}{1-\varkappa}.
\end{equation}
Estimating the terms in the sum~\eqref{sum_kj} and applying inequalities~\eqref{nu_est} and~\eqref{kappa_est}, along with Assumptions~\eqref{A1}, \eqref{A2} and~\eqref{A4}, we conclude that the condition~\eqref{sum_kj} holds for each ``$s$'' representing the components $z$, $\zeta$, $p$, and $q$.
Then~\eqref{k_sum} and~\eqref{sum_kj} together with~\eqref{norm2_identity} imply the assertion of Lemma~\ref{lem_quadratic}.
\end{proof}

As the system of vectors~\eqref{Riesz_basis} and the eigenvectors of $\cal A$ are related through a bounded and invertible linear transformation, their relationship is summarized in the following lemma.

\begin{lemma}\label{statement_CD}
Under Assumptions~\eqref{A1} and \eqref{A2}, let the vectors $\{ \theta_k^z, \theta_k^\zeta ,\theta_k^p, \theta_k^q \, \vert \, k\in {\mathbb N}\}$ be defined by~\eqref{Riesz_basis}.
Then:
\begin{enumerate}
\item[i)]the sequence $\{ \theta_k^z, \theta_k^\zeta ,\theta_k^p, \theta_k^q\}_{k=1}^\infty$ is $\omega$-linearly independent if and only if the sequence $\left\{\theta_k^+, \theta_k^-, \theta_{-k}^+, \theta_{-k}^- \right\}_{k=1}^\infty$ is $\omega$-linearly independent;
\item[ii)]$
\overline{\rm span}\left\{\theta_k^+, \theta_k^-, \theta_{-k}^+, \theta_{-k}^- \right\}_{k=1}^\infty =\overline{\rm span}\{ \theta_k^z, \theta_k^\zeta ,\theta_k^p, \theta_k^q \}_{k=1}^\infty
$.
\end{enumerate}
\end{lemma}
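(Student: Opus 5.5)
The plan is to exhibit, for each fixed $k\in\mathbb N$, the four vectors $(\theta_k^z,\theta_k^\zeta,\theta_k^p,\theta_k^q)$ as the image of the four eigenvectors $(\theta_{-k}^+,\theta_{-k}^-,\theta_k^+,\theta_k^-)$ under an explicit invertible $4\times 4$ matrix $M_k$, and then to argue blockwise. From \eqref{Riesz_basis}, $\theta_k^z$ and $\theta_k^p$ involve only $\theta_{-k}^\pm$, while $\theta_k^\zeta$ and $\theta_k^q$ involve only $\theta_k^\pm$. Hence $M_k$ is block diagonal, with two $2\times2$ blocks. The first block is
$$
\frac{1}{2b_k}\begin{pmatrix} \overline\nu_k^2/\Renu & -\nu_k^2/\Renu \\ \overline\nu_k^2 & \nu_k^2\end{pmatrix},
$$
and the second block has the same form up to sign and conjugation. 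The determinant of the first block equals $\frac{|\nu_k|^4}{2b_k^2\,\Renu}$, which is nonzero because $b_k\neq0$ by \eqref{A1}, $\Renu>0$, and $\nu_k\neq0$ by Lemma~\ref{lemma_eigenvalues_asympt}; the second block is handled identically. One point must be checked here: the four eigenvalues $\sigma_{\pm k}^{\pm}$ are pairwise distinct, since $\Renu>0$ gives $\sigma_k^+\neq\sigma_k^-$. This ensures that the eigenvectors in each pair are genuinely distinct and that the transformation is a change of generators within each four-dimensional group. For the finitely many small $k$ not covered by the asymptotics, I would simply index the eigenvalues as in Lemma~\ref{lemma_eigenvalues_asympt}; the Hamiltonian symmetry of Lemma~\ref{lem_spectrum_balls} still provides quadruples $\pm\sigma,\pm\overline\sigma$, so the same algebra applies.

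For part ii), each finite group $\{\theta_k^z,\theta_k^\zeta,\theta_k^p,\theta_k^q\}$ spans the same four-dimensional (or smaller) subspace $E_k$ as $\{\theta_k^\pm,\theta_{-k}^\pm\}$, because $M_k$ is invertible. It follows that the algebraic spans of the two full families coincide, and therefore so do their closures.

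For part i), I would use the definition of $\omega$-linear independence: a family $\{e_n\}$ is $\omega$-independent if $\sum_n c_n e_n=0$, with the series convergent, forces $c_n=0$. Suppose $\sum_k\bigl(a_k\theta_k^z+b_k'\theta_k^\zeta+c_k\theta_k^p+d_k\theta_k^q\bigr)=0$, where the series converges in the grouped sense. Substituting \eqref{Riesz_basis} gives a convergent grouped series in the eigenvectors, whose coefficients are $M_k^\top(a_k,b_k',c_k,d_k)^\top$. By $\omega$-independence of the eigenvectors these coefficients vanish, and invertibility of $M_k$ then gives $a_k=b_k'=c_k=d_k=0$. The converse direction is the same argument with $M_k^{-1}$.

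The main obstacle is the convergence convention. $\omega$-independence is usually stated for convergent series in a fixed ordering, and re-expressing the series blockwise only preserves convergence along partial sums taken at block boundaries. I would handle this by adopting the convention that the series converges with partial sums over complete blocks $k\le N$. With that convention, the partial sums of the two expansions coincide exactly at every $N$, so convergence transfers in both directions without any uniform bound on $M_k$ or $M_k^{-1}$. The latter matters because $\|M_k^{-1}\|$ may blow up as $\Renu\to0$, so an argument relying on such bounds would not be available.
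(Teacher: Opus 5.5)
Your proposal is correct and takes the same route as the paper: for each $k$, relations \eqref{Riesz_basis} give an invertible $2\times 2$ change of generators between $\{\theta_{-k}^+,\theta_{-k}^-\}$ and $\{\theta_k^z,\theta_k^p\}$, and between $\{\theta_k^+,\theta_k^-\}$ and $\{\theta_k^\zeta,\theta_k^q\}$. Where you compute the determinant $|\nu_k|^4/(2b_k^2\,\mathrm{Re}\,\nu_k)$, the paper writes out the inverse formulas, and it gets $\mathrm{Re}\,\nu_k\neq 0$ for every $k$ (not only for large $k$) from the hyperbolicity in Lemma~\ref{lem_spectrum_balls}; that is the cleaner way to settle your finitely many small indices. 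Your explicit convention of summing over complete blocks is a point the paper leaves implicit, and it is the right one: the inverse blocks are not uniformly bounded (some entries are of order $1/|b_k|$), and block $\omega$-independence of the new family implies the ordinary $\omega$-independence that Bari's theorem requires in Proposition~\ref{thm_Riesz}.
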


\begin{proof}
Assume \eqref{A1} (so that $b_k\neq 0$ for all $k$) and assume that the spectrum is hyperbolic, that is, $\sigma_p(A)\cap i\mathbb{R}=\emptyset$.
Then, for every $k\in\mathbb{N}$, relations~\eqref{Riesz_basis} define an invertible linear change of generators between
$\mathrm{span}\{\theta_{-k}^+,\theta_{-k}^-\}$ and $\mathrm{span}\{\theta_k^z,\theta_k^p\}$,
and between
$\mathrm{span}\{\theta_{k}^+,\theta_{k}^-\}$ and $\mathrm{span}\{\theta_k^\zeta,\theta_k^q\}$.
More precisely, one has the explicit inverse formulas
$$
\theta_{-k}^+=\frac{b_k}{\overline{\nu_k}^{\,2}}\Big(\theta_k^p+\Re(\nu_k)\,\theta_k^z\Big),
\;
\theta_{-k}^-=\frac{b_k}{\nu_k^{2}}\Big(\theta_k^p-\Re(\nu_k)\,\theta_k^z\Big),
$$
and
$$
\theta_{k}^+=-\frac{b_k}{\overline{\nu_k}^{\,2}}\Big(\theta_k^q+\Re(\nu_k)\,\theta_k^\zeta\Big),
\;
\theta_{k}^-=-\frac{b_k}{\nu_k^{2}}\Big(\theta_k^q-\Re(\nu_k)\,\theta_k^\zeta\Big).
$$
From~\eqref{Riesz_basis}, for fixed $k$,
$$
2b_k\Re(\nu_k)\,\theta_k^z
= \overline{\nu_k}^{\,2}\theta_{-k}^+ - \nu_k^2\theta_{-k}^-,
\;
2b_k\,\theta_k^p
= \overline{\nu_k}^{\,2}\theta_{-k}^+ + \nu_k^2\theta_{-k}^-.
$$
Adding and subtracting these identities yields
$$
2b_k\big(\theta_k^p+\Re(\nu_k)\theta_k^z\big)
=
2\overline{\nu_k}^{\,2}\theta_{-k}^+,
$$
$$
2b_k\big(\theta_k^p-\Re(\nu_k)\theta_k^z\big) = 2\nu_k^{2}\theta_{-k}^-,
$$
which gives the first pair of formulas. The second pair is obtained analogously from the relations defining $\theta_k^\zeta$ and $\theta_k^q$ in~\eqref{Riesz_basis}.

It remains to justify that the denominators do not vanish. By~\eqref{A1}, $b_k\neq 0$. By hyperbolicity, there are no purely imaginary eigenvalues, hence $\Re(\nu_k)\neq 0$ (otherwise $\sigma_k^\pm=i\omega_k\pm\nu_k$ would be purely imaginary). Therefore $\nu_k\neq 0$, which proves invertibility.
\end{proof}

\begin{proposition}\label{thm_Riesz}
Under Assumptions~\eqref{A1}, \eqref{A2}, and~\eqref{A4}, the sequence $\{ \theta_k^z, \theta_k^\zeta ,\theta_k^p, \theta_k^q \}_{k=1}^\infty$ is a Riesz basis of $X$, consisting of linear combinations of eigenfunctions of $\cal A$.
\end{proposition}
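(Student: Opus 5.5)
The natural route is Bari's theorem, in the form given by Gohberg and Krein~\cite{gohberg1978introduction}. Let $\{e_n\}$ be an orthonormal basis of a Hilbert space, and let $\{f_n\}$ be $\omega$-linearly independent and quadratically close to it, i.e.\ $\sum_n\|f_n-e_n\|^2<\infty$. Then $\{f_n\}$ is a Riesz basis. An equivalent formulation replaces $\omega$-linear independence by completeness: a complete sequence quadratically close to an orthonormal basis is a Riesz basis. I would apply this with $e_n$ the standard basis $\{\hat\theta_k^z,\hat\theta_k^\zeta,\hat\theta_k^p,\hat\theta_k^q\}_{k\ge1}$ of $X$ and $f_n$ the vectors $\{\theta_k^z,\theta_k^\zeta,\theta_k^p,\theta_k^q\}_{k\ge1}$ of~\eqref{Riesz_basis}.

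Quadratic closeness is exactly Lemma~\ref{lem_quadratic}, which holds under \eqref{A1}, \eqref{A2} and \eqref{A4}.

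For completeness, Lemma~\ref{statement_B} says that the eigenvectors $\theta_{\pm k}^\pm$ span a dense subspace of $X$. Lemma~\ref{statement_CD}\,ii) then transfers this to the new system: its closed span is also $X$.

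With quadratic closeness and completeness in hand, Bari's theorem finishes the argument. For robustness I would spell out the mechanism in the form I trust. Define $T:X\to X$ by $Te_n=f_n$, so that $T=I+K$ with $K$ Hilbert--Schmidt, since $\|K\|_{HS}^2=\sum_n\|f_n-e_n\|^2<\infty$. Then $T$ is Fredholm of index zero, and $T$ has dense range by completeness. A Fredholm operator has closed range, so $T$ is surjective, and index zero then gives $\ker T=\{0\}$. Hence $T$ is a bounded isomorphism, and $\{f_n\}=\{Te_n\}$ is a Riesz basis.

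As a by-product, injectivity of $T$ yields $\omega$-linear independence of the $\theta$-system. By Lemma~\ref{statement_CD}\,i) this transfers back to the eigenvector family.

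Finally, the clause ``consisting of linear combinations of eigenfunctions of $\mathcal A$'' is immediate from the definition~\eqref{Riesz_basis}. Each $\theta_k^s$ is a combination of two eigenvectors of the form~\eqref{theta_eig}, with coefficients that are well defined because $b_k\neq0$ and $\Re\nu_k\neq0$; this was verified in the proof of Lemma~\ref{statement_CD}.

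The main obstacle is the completeness step, because the indexing of Lemma~\ref{lemma_eigenvalues_asympt} may not cover every eigenvalue. The asymptotic representation~\eqref{eigenval_asympt} is only established for $k$ large. For small $k$, the balls $\mathcal B_k^\pm$ of Lemma~\ref{lem_spectrum_balls} may overlap, and the labelling of eigenvalues as $\sigma_{\pm k}^\pm$ could miss or reorder finitely many of them. The Fredholm argument handles this cleanly, so I would state it carefully. Any finite-dimensional discrepancy between the true eigenvector family and the family indexed by~\eqref{Riesz_basis} is absorbed into the finite-rank part of $K$. Surjectivity of $T$ still follows from dense range plus closed range. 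So the only genuine input needed is that the closed span of the $\theta_k^s$ equals $X$, which I would take from Lemmas~\ref{statement_B} and~\ref{statement_CD}.
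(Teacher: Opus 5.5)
Your proof is correct, but it runs Bari's theorem through the opposite hypothesis from the paper.

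\textbf{The paper's route.} It uses the $\omega$-linear-independence form of Bari's theorem. Via Lemma~\ref{statement_CD}\,i) it reduces to $\omega$-linear independence of the eigenvectors $\{\theta_k^\pm,\theta_{-k}^\pm\}$. It justifies that step only by pointing to an adaptation of Guo's Theorem~2.38, which is the least explicit part of its argument.

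\textbf{Your route.} You use the completeness form instead. Completeness of the eigenvectors is already Lemma~\ref{statement_B}. Lemma~\ref{statement_CD}\,ii) transfers it to the $\theta_k^s$; for each $k$ the change of generators is invertible, so even the algebraic spans coincide. The index-zero Fredholm argument for $T=I+K$, with $K$ Hilbert--Schmidt, then upgrades dense range to bijectivity.

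\textbf{What this buys.} Your proof is self-contained modulo a lemma the paper has already established. It also makes $\omega$-linear independence a consequence rather than an input. Strictly, that consequence follows from boundedness of $T^{-1}$ applied to partial sums, not from injectivity of $T$ on $\ell^2$ coefficients alone. You do not need it anyway.

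\textbf{One correction to your last paragraph.} The Fredholm argument tolerates finitely many $\theta_k^s$ being far from $\hat\theta_k^s$. It cannot absorb eigenvectors that are missing from the family. If the labelling $\sigma_{\pm k}^\pm$ missed eigenvalues, the family would be incomplete. Then $T$ would have nontrivial cokernel, hence (by index zero) nontrivial kernel, and the conclusion would fail. So that concern is really about the validity of Lemma~\ref{statement_B}, and of defining $\nu_k$ for small $k$, which you, like the paper, take as given.
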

\begin{proof}
Due to Lemma~\ref{lem_quadratic}, the sequence $\{ \theta_k^z, \theta_k^\zeta ,\theta_k^p, \theta_k^q \}_{k=1}^\infty$ is quadratically close to the standard {orthonormal} basis in~$X$.
 Therefore, according to Bari's theorem~\cite{gohberg1978introduction},
the sequence $\{ \theta_k^z, \theta_k^\zeta ,\theta_k^p, \theta_k^q\}_{k=1}^\infty$ is a Riesz basis in $X$, provided that it is $\omega$-linearly independent.
Thus, at the light of part~i) of Lemma~\ref{statement_CD}, it suffices to prove that
\begin{equation}\label{omega_indep}
\left\{\theta_k^+, \theta_k^-, \theta_{-k}^+, \theta_{-k}^- \right\}_{k=1}^\infty\quad\text{is}\;\; \omega\text{-linearly independent}.
\end{equation}
Property~\eqref{omega_indep} is proved by adapting the approach used in the proof of~\cite[Theorem~2.38]{guo2019control} to the case of linear combinations of eigenvectors, taking into account Lemma~\ref{statement_CD}.
\end{proof}

{
For a given $\theta\in X$, we consider its expansion with respect to the Riesz basis in $X$, defined in Proposition~\ref{thm_Riesz}:
$$
\theta = \sum_{k=1}^\infty \left( a_k^z \theta_k^z + a_k^\zeta \theta_k^\zeta + a_k^p \theta_k^p + a_k^q \theta_k^q \right).
$$
This expansion is related to the bounded linear operator $\pi_0:X\to X$ such that
\begin{equation}\label{pi0t}
X\ni a= \begin{pmatrix}a^z \\ a^\zeta \\ a^p \\ a^q \end{pmatrix} \mapsto \pi_0 a =\theta= \begin{pmatrix} z \\ \zeta \\ p \\ q \end{pmatrix}\in X.
\end{equation}
According to the formulas~\eqref{Riesz_basis}, the operator $\pi_0:X\to X$ acts as
\begin{equation}\label{pi0}
\pi_0 a = \pi_z a^z + \pi_\zeta a^\zeta + \pi_p a^p + \pi_q a^q,
\end{equation}
where the matrix representations of the operators $\pi_z,\pi_\zeta,\pi_p,\pi_q: \ltwoC\to X$ are composed of the columns from~\eqref{Riesz_basis}:
\begin{equation}\label{pi_components}
\begin{aligned}
\pi_z &= (\theta_1^z, \theta_2^z,... ),\;
\pi_\zeta = (\theta_1^\zeta, \theta_2^\zeta,... ),\\
\pi_p &= (\theta_1^p, \theta_2^p,... ),\;
\pi_q = (\theta_1^q, \theta_2^q,... ).
\end{aligned}
\end{equation}
Here, we follow the convention that $\pi_s a^s = \sum_{k=1}^\infty \theta_k^s a^s_k$, for each ``$s$'' representing the components $z$, $\zeta$, $p$, $q$. Note that both $\pi_0$ and $\pi_0^{-1}$ are bounded linear operators from $X$ to $X$, because $\{\theta_k^z, \theta_k^\zeta, \theta_k^p, \theta_k^q \}_{k=1}^\infty$ is a Riesz basis in $X$.
}

{
\paragraph{Isomorphism property of $\pi_0$ on $V_{0,\gamma}^X$}
For $\gamma\in\mathbb R$, define
$$
V_{0,\gamma}^X
=\Bigl\{\theta=(z,\zeta,p,q)\in X \ | \ 
\norm{\theta}_{V_{0,\gamma}^X}^2
=\sum_{k\geq 1}\frac{|z_k|^2+|\zeta_k|^2+|p_k|^2+|q_k|^2}{|b_k|^{2\gamma}}<\infty\Bigr\}.
$$
Let $W_\gamma:V_{0,\gamma}^X\to X$ be the diagonal isometry, i.e. $(W_\gamma\theta)_k=\theta_k/|b_k|^\gamma$.}

%Fix $\rho>0$ (so $\gamma=-\rho<0$).

{
\begin{lemma}\label{lem:pi0-weighted}
Let \eqref{A1}, \eqref{A2},~\eqref{A4}, and~\eqref{A5} hold
for some $\rho>0$.
Then the operator $\pi_0$, defined by~\eqref{pi0}, extends to a bounded isomorphism
$$
\pi_0:\ V_{0,-\rho}^X\longrightarrow V_{0,-\rho}^X,
\qquad
\pi_0^{-1}:\ V_{0,-\rho}^X\longrightarrow V_{0,-\rho}^X.
$$
\end{lemma}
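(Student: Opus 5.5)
We conjugate $\pi_0$ by the diagonal isometry $W_{-\rho}$ and show that the resulting operator on $X$ is a Hilbert--Schmidt perturbation of a bounded, boundedly invertible block-diagonal operator. Since $(W_{-\rho}\theta)_j=|b_j|^{\rho}\theta_j$, boundedness of $\pi_0$ on $V^X_{0,-\rho}$ is equivalent to boundedness on $X$ of
$$
M:=W_{-\rho}\,\pi_0\,W_{-\rho}^{-1}.
$$
The matrix of $M$ in the $\mathbb C^4$-block structure has blocks
$$
M_{jk}=\frac{|b_j|^{\rho}}{|b_k|^{\rho}}\,\bigl[\Pi_j\theta_k^z,\ \Pi_j\theta_k^\zeta,\ \Pi_j\theta_k^p,\ \Pi_j\theta_k^q\bigr].
$$
We split $M=D+K$, where $D$ collects the diagonal blocks $M_{kk}=\bigl[\Pi_k\theta_k^s\bigr]_s$; these carry no weight factor. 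By the explicit formulas for $\Pi_k\theta_k^s$ in the proof of Lemma~\ref{lem_quadratic}, together with \eqref{nu_est}, we have $M_{kk}=I_4+O(|b_k|)$. Hence $D$ is bounded, and $M_{kk}$ is invertible for all $k\geq k_0$ with uniformly bounded inverses.

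Next we show that the off-diagonal part $K$ is Hilbert--Schmidt. Using the explicit formulas for $\Pi_j\theta_k^s$ ($j\neq k$) from Lemma~\ref{lem_quadratic}, with \eqref{nu_est} and \eqref{kappa_est}, every entry satisfies $|\Pi_j\theta_k^s|\lesssim |b_j||b_k|\,|\omega_j-\omega_k|^{-1}$. Here we use that $|i(\omega_j\pm\omega_k)\pm\nu_k|\gtrsim|\omega_j-\omega_k|$ for $k$ large, by \eqref{A2} and $\nu_k\to0$; the factors $\nu_k^2/(b_k\Re\nu_k)$ are $O(1)$ and the bare $\nu_k^2/b_k$ is $O(|b_k|)$. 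For the $z,\zeta$ columns one first combines the two terms; the difference $\overline\nu_k^2/(\cdot+\overline\nu_k)-\ldots$ only produces the cruder bound $|b_j|/|\omega_j-\omega_k|$, which is what we will use. The Hilbert--Schmidt norm is then dominated by
$$
\sum_k\sum_{j\neq k}\frac{|b_j|^{2\rho}}{|b_k|^{2\rho}}\,\frac{|b_j|^2}{(\omega_j-\omega_k)^2},
$$
which is exactly the quantity in \eqref{A5}. Finitely many $k$ are harmless, since each column is in $V^X_{0,-\rho}$ because $b\in\ell^2$. So $M$ is bounded on $X$, i.e., $\pi_0$ is bounded on $V^X_{0,-\rho}$. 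By density of $X$ in $V^X_{0,-\rho}$ it extends from $X$, and the extension agrees with $\pi_0$ on $X$ since $V^X_{0,-\rho}\supset X$.

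For invertibility, the plan is a Fredholm argument. $M=D+K$ with $K$ compact, and $D$ is Fredholm of index zero because only finitely many blocks $M_{kk}$ could fail to be invertible. Therefore $M$ is Fredholm of index zero, and it suffices to prove injectivity on $X$. If $Ma=0$ with $a\in X$, set $\tilde a=W_{-\rho}^{-1}a\in V^X_{0,-\rho}$, so that $\pi_0\tilde a=0$ in $V^X_{0,-\rho}$. Componentwise, $\sum_k\tilde a_k^s\Pi_j\theta_k^s=0$ for every $j$, with convergence guaranteed by the Hilbert--Schmidt bound. We then need to deduce $\tilde a=0$, and this is the main obstacle: $\tilde a$ need not lie in $X$, so the Riesz basis property of Proposition~\ref{thm_Riesz} cannot be applied directly.

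To overcome this, I would pair with the biorthogonal system. The dual Riesz basis consists of combinations of eigenvectors of ${\cal A}^*$, which have the same explicit form \eqref{theta_sigma}, so the same estimates show they lie in $V^X_{0,\rho}$, the dual of $V^X_{0,-\rho}$. Pairing $\pi_0\tilde a$ with each dual element, justified by the weighted convergence, gives $\tilde a_k^s=0$. An alternative is to run the entire argument with $\pi_0^{-1}$ in place of $\pi_0$: its matrix is built from the biorthogonal vectors, it has the same structure, and so it is bounded on $V^X_{0,-\rho}$ by the same computation. The two extensions are then mutually inverse on the dense subspace $X$, hence everywhere.
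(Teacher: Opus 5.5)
Your proposal is correct. The boundedness half is the paper's own argument: the paper also conjugates by $W_{-\rho}$, notes that the diagonal blocks $\Pi_k\theta_k^s$ carry no weight, and bounds the off-diagonal Hilbert--Schmidt norm by the sum in \eqref{A5} using the same crude bound $\|\Pi_j\theta_k^s\|\lesssim |b_j|/|\omega_j-\omega_k|$.

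The routes differ at invertibility. The paper applies Bari's theorem to the renormalized family $|b_k|^{-\rho}W_{-\rho}\theta_k^s$. It settles $\omega$-linear independence by remarking that nonzero rescalings preserve it. That remark does not account for the factor $W_{-\rho}$, whose inverse is unbounded. What is actually needed is $\omega$-independence of $\{\theta_k^s\}$ under the weaker $V^X_{0,-\rho}$-convergence, which is exactly the obstacle you isolate. Your reduction to injectivity via Fredholm index zero, followed by pairing with the biorthogonal system, handles this correctly. It costs extra input: the biorthogonal vectors must be combinations of eigenvectors of ${\cal A}^*$, and they must lie in $V^X_{0,\rho}$.

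For $\rho>1$ the membership in $V^X_{0,\rho}$ is not ``the same estimate''. It requires $\sum_{j\neq k}|b_j|^{2-2\rho}(\omega_j-\omega_k)^{-2}<\infty$ for each $k$. This does follow from \eqref{A5}. The terms of \eqref{A5} with a fixed inner index $j$ give $\sum_{k\neq j}|b_k|^{-2\rho}(\omega_j-\omega_k)^{-2}<\infty$, since $b_j\neq 0$. Then swap the names $j,k$ and use $|b_j|^2\leq\|b\|_{\ell^\infty}^2$.

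Your alternative via $\pi_0^{-1}$ similarly needs $\sum_k\sum_{j\neq k}|b_k|^{2\rho}|b_j|^{2-2\rho}(\omega_j-\omega_k)^{-2}<\infty$. This is not literally \eqref{A5}, but it is at most twice it. Compare each term with the \eqref{A5} term for the pair $(k,j)$ if $|b_j|>|b_k|$, and for the pair $(j,k)$ otherwise.

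A shorter way to finish avoids duality altogether. For $d\in X$ one has $MW_{-\rho}d=W_{-\rho}\pi_0 d$, and $\pi_0$ maps $X$ onto $X$ by Proposition~\ref{thm_Riesz}. So the range of $M$ contains $W_{-\rho}X$, which is dense in $X$. Since $M$ is Fredholm, its range is closed, so $M$ is onto, and index zero then gives injectivity. This is the ``complete plus quadratically close'' form of Bari's theorem, and it is also the simplest repair of the paper's step.
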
}

\begin{proof}
{Consider the conjugated operator on $X$:
$$
\widetilde\pi_{0,\rho}=W_{-\rho}\,\pi_0\,W_{-\rho}^{-1}.
$$
Since $W_{-\rho}$ is an isometric isomorphism $V_{0,-\rho}^X\to X$, bounded invertibility of $\pi_0$ on $V_{0,-\rho}^X$ is equivalent to bounded invertibility of $\widetilde\pi_{0,\rho}$ on $X$.
Define the renormalized family in $X$ by
$$
\widetilde\theta_{k,\rho}^s = W_{-\rho}\big(|b_k|^{-\rho}\,\theta_k^s\big),
\qquad s\in\{z,\zeta,p,q\}.
$$
A direct coefficient computation shows that $\widetilde\pi_{0,\rho}$ is the ``synthesis" operator of
$\{\widetilde\theta_{k,\rho}^z,\widetilde\theta_{k,\rho}^\zeta,\widetilde\theta_{k,\rho}^p,\widetilde\theta_{k,\rho}^q\}_{k\geq 1}$.
Thus, by Bari's theorem, it suffices to prove that this renormalized family is quadratically close to the standard orthonormal basis of $X$ and $\omega$-linearly independent.}

\smallskip
\noindent\emph{Diagonal part.}
For each $s\in\{z,\zeta,p,q\}$ and each $k$, one has $\Pi_k\widetilde\theta_{k,\rho}^s=\Pi_k\theta_k^s$ because the weight factors cancel at $j=k$.
Hence the diagonal estimates from Lemma~\ref{lem_quadratic} remain unchanged and yield
$$
\sum_{k\geq 1}\Vert\Pi_k(\widetilde\theta_{k,\rho}^s-\widehat\theta_k^s)\Vert^2<\infty.
$$

\smallskip
\noindent\emph{Off-diagonal part.}
For $j\neq k$,
$$
\Pi_j\widetilde\theta_{k,\rho}^s
=|b_j|^{\rho}|b_k|^{-\rho}\,\Pi_j\theta_k^s.
$$
The proof of Lemma~\ref{lem_quadratic} yields a uniform estimate (for each $s$) of the form
$$
\Vert\Pi_j\theta_k^s\Vert\leq C \frac{|b_j|}{|\omega_j-\omega_k|},\qquad j\neq k,
$$
with a constant $C$ independent of $j,k$ and of $s\in\{z,\zeta,p,q\}$,
hence
$$
\Vert\Pi_j\widetilde\theta_{k,\rho}^s\Vert^2
\leq C^2 \frac{|b_j|^{2(1+\rho)}}{|b_k|^{2\rho}} \frac{1}{(\omega_j-\omega_k)^2}.
$$
Summing in $j\neq k$ and then in $k$ yields quadratic closeness by \eqref{A5}.

\smallskip
\noindent\emph{$\omega$-linear independence.}
Multiplying each vector $\theta_k^s$ by a nonzero scalar preserves $\omega$-linear independence. Thus Bari's theorem applies and yields that the renormalized family is a Riesz basis of $X$.
Therefore $\widetilde\pi_{0,\rho}$ is bounded and invertible on $X$, and conjugating back concludes.
\end{proof}

\subsection{Polynomial decay conditions}
Our study of the large-time behavior of solutions to the considered control system is based on the following lemma.

\begin{lemma}\label{lem_polynomial}
Let the sequence $\{\sigma_k^-\}_{k\in\mathbb N}$ be defined by~\eqref{eigenval_asympt} and~\eqref{lambda0}, where $\omega_k$ and $b_k$ satisfy Assumptions~\eqref{A1}, {\eqref{A2}, and~\eqref{A3}.
Then, for any $\beta>0$, there exists a constant $C_\beta>0$ such that
\begin{equation}\label{exp_poly_decay}
|e^{\sigma_k^- t}| \leq C_\beta |b_k|^{-\beta} (t+1)^{-\beta}\quad \text{for all}\;\; t\geq 0,\; k\in{\mathbb N}.
\end{equation}
Here, $C_\beta$ depends on $\beta$, $b$, and the constants in Lemma~\ref{lemma_eigenvalues_asympt}, but not on $t$ or $k$.}
\end{lemma}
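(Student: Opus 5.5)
The plan is to reduce \eqref{exp_poly_decay} to an elementary scalar inequality. The first step is a lower bound on the real part of the exponent. By \eqref{eigenval_asympt}, $\sigma_k^-=i\omega_k-\overline{\nu_k}$, so $|e^{\sigma_k^- t}|=e^{-\Re\nu_k\, t}$. Lemma~\ref{lemma_eigenvalues_asympt} gives $|\varepsilon_k|<\lambda_k^0/2$, hence $\Re\nu_k\geq \lambda_k^0/2$. It then suffices to bound $\lambda_k^0$ below by $c|b_k|$ with $c>0$ uniform in $k$. For this we use the denominator in \eqref{lambda0}. Since $\frac{\omega_k^2+\omega_j^2}{(\omega_k^2-\omega_j^2)^2}\leq \frac{1}{(\omega_k-\omega_j)^2}$ (because $\omega_k^2+\omega_j^2\leq(\omega_k+\omega_j)^2$), we get $\sum_{j\neq k} b_j^2\frac{\omega_k^2+\omega_j^2}{(\omega_k^2-\omega_j^2)^2}\leq \|b\|_{\ell^\infty}^2 S_k\leq \|b\|^2_{\ell^2}\bar C$ by \eqref{A3}. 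Also $b_k^2/(4\omega_k^2)\leq \|b\|^2/(4\omega_1^2)$. Hence $\lambda_k^0\geq c_0|b_k|$ with $c_0=(2+\|b\|^2/(4\omega_1^2)+2\|b\|^2\bar C)^{-1/2}$, and therefore $\Re\nu_k\geq c|b_k|$ with $c=c_0/2$ for all $k$.

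One caveat needs checking. Lemma~\ref{lemma_eigenvalues_asympt} asserts the representation with $|\varepsilon_k|<\lambda_k^0/2$ only for $k$ large, say $k\geq k_0$. For the finitely many $k<k_0$ we instead use Lemma~\ref{lem_spectrum_balls}(2) (hyperbolicity) and the labelling of eigenvalues: each $\Re\sigma_k^-$ is strictly negative. Taking the minimum over the finite set, $-\Re\sigma_k^-\geq c'|b_k|$ for some $c'>0$, possibly after decreasing $c$. This bookkeeping, fixing a uniform constant across both regimes, is the only delicate point. Everything else is elementary.

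Finally we use the scalar inequality $e^{-s}\leq C'_\beta s^{-\beta}$ for $s>0$, with $C'_\beta=(\beta/e)^\beta$, and treat two time regimes.
\begin{itemize}
\item For $t\geq 1$, apply it with $s=c|b_k|t$. This gives $e^{-c|b_k|t}\leq C'_\beta c^{-\beta}|b_k|^{-\beta}t^{-\beta}\leq 2^\beta C'_\beta c^{-\beta}|b_k|^{-\beta}(t+1)^{-\beta}$, since $t+1\leq 2t$.
\item For $0\leq t\leq 1$, use $|e^{\sigma_k^- t}|\leq 1\leq \|b\|_{\ell^\infty}^\beta |b_k|^{-\beta}\cdot 2^\beta (t+1)^{-\beta}$.
\end{itemize}
Taking $C_\beta$ to be the maximum of the two constants yields \eqref{exp_poly_decay}. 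The resulting constant depends only on $\beta$, $\|b\|$, $\omega_1$, $\bar C$ and the finitely many low modes, not on $t$ or $k$.
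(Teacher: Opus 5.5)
Your proposal is correct and follows essentially the same route as the paper. Both combine the scalar bound $\sup_{y>0}y^\beta e^{-y}=(\beta/e)^\beta$ with $\Re\nu_k\gtrsim|b_k|$ and absorb the finitely many low modes into the constant. The differences are cosmetic:
\begin{itemize}
\item The paper passes to $t+1$ by writing $e^{-\Re\nu_k t}=e^{\Re\nu_k}e^{-\Re\nu_k(t+1)}$, where $e^{\Re\nu_k}$ is uniformly bounded because $\Re\nu_k<\sqrt{2}|b_k|$. You instead split into $t\leq 1$ and $t\geq 1$.
\item The paper simply cites $\Re\nu_k\asymp|b_k|$, whereas you derive the lower bound explicitly from \eqref{lambda0}. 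In that derivation \eqref{A3} is not actually needed, since \eqref{A2} alone gives $\sum_{j\neq k}b_j^2/(\omega_k-\omega_j)^2\leq\|b\|_{\ell^2}^2/\omega_*^2$.
\end{itemize}
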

\begin{proof}
{
We start with the following scalar inequality: for every $\beta>0$,
\begin{equation}\label{exp_ineq}
e^{-y}\leq \beta^\beta e^{-\beta} y^{-\beta}\quad \text{for all}\; y>0.
\end{equation}
Indeed, the maximum of $y^\beta e^{-y}$ over $y>0$ is attained at $y=\beta$ and equals $\beta^\beta e^{-\beta}$.
Now, we apply~\eqref{exp_ineq} to $e^{\sigma_k^- t}$.
Since
$\sigma_k^-=i\omega_k-\overline{\nu_k}$, we have
$$
|e^{\sigma_k^- t}|=e^{-(\Renu)t}=e^{\Renu}e^{-(\Renu)(t+1)}.
$$
Fix an arbitrary $\beta>0$ and apply inequality~\eqref{exp_ineq} to the above representation with $y=(\Renu)(t+1)$.
As a result, we obtain
\begin{equation}\label{diag_decay}
|e^{\sigma_k^- t}|\leq e^{-\beta+\Renu} \beta^\beta (\Renu)^{-\beta}(t+1)^{-\beta}\; \text{for all}\; t\geq 0, k\in\mathbb N.
\end{equation}
Under Assumptions \eqref{A1}, \eqref{A2}, and~\eqref{A3}, Lemma~\ref{lemma_eigenvalues_asympt} implies $\Renu \asymp |b_k|$ (for large $k$;  the finitely many remaining $k$ can be absorbed into the constant).
Hence, we obtain the general estimate~\eqref{exp_poly_decay} from~\eqref{diag_decay}.}
\end{proof}

\begin{remark}\label{rem_backward}
Because of the representations~\eqref{eigenval_asympt}, the estimate~\eqref{exp_poly_decay} of Lemma~\ref{lem_polynomial} can also be rewritten in backward time as follows:
\begin{equation}\label{exp_poly_decay_backward}
|e^{\sigma_k^+ t}| \leq C_\beta |b_k|^{-\beta} (T-t+1)^{-\beta}\quad \text{for all}\;\; t\leq T,\; k\in{\mathbb N}.
\end{equation}
\end{remark}

%By extending the above construction to the system $\dot\upsilon(t) = {\cal A}_\upsilon \upsilon(t)$ in backward time with $\upsilon(t)\in \ltwoC$ and ${\cal A}_\upsilon = i\Omega+{\cal V}$,
%\begin{equation}\label{V_op}
%{\cal V}:\ltwoC\to\ltwoC,\; {\cal V} = {\rm diag}(\nu_1, \nu_2,...),
%\end{equation}
%we obtain the following corollary of Lemma~\ref{lem_polynomial}.

%\begin{corollary}\label{cor_polynomial}
%Let the sequence $\{\sigma_k^+\}_{k\in\mathbb N}$ be defined by~\eqref{eigenval_asympt} and~\eqref{lambda0}, where $\omega_k$ and $b_k$ satisfy Assumptions~\eqref{A1}, \eqref{A2}, \eqref{A4}, and \eqref{A5} for some $\alpha>0$.
%Then, there exists a constant $K>0$ such that
%\begin{equation}\label{exp_poly_decay}
%|e^{\sigma_k^+ (t-T)}| \leq K |\sigma_k^+| (T-t+1)^{-1/\alpha},
%\end{equation}
%for all $T\in \mathbb R$, $t\leq T$, and $k\in{\mathbb N}$.
%\end{corollary}

%\begin{remark}
%It can be shown that the condition
%\begin{equation}\label{alpha_log}
%\alpha > \limsup_{n\to \infty}\left( - \frac{\log \vert b_n\vert }{\log \omega_n} \right)
%\end{equation}
%implies~\eqref{A5}.
%\end{remark}

A key result concerning the two-sided polynomial asymptotics of solutions to the variational system is given in the following proposition.

\begin{proposition}\label{thm_turnpike}
{
Let Assumptions \eqref{A1}, \eqref{A2},~\eqref{A3}, and~\eqref{A5} be satisfied
for some constant $\rho>1$, and set $\beta=\rho-1$, $\gamma=-\rho$ (so that, by Lemma~\ref{lem:pi0-weighted}, $\pi_0$ is a bounded isomorphism on $V_{0,\gamma}^X$).
Then there exists a constant $C>0$ such that, for every $T>0$ and every solution $\Delta(\cdot)\in C([0,T];X)$ of~\eqref{delta_op},
\begin{equation}\label{turnpike}
\norm{\Delta(t)}_{V_{0,\gamma}^X}
\leq C\left(
\frac{\norm{\Delta(0)}_X}{(t+1)^\beta}
+\frac{\norm{\Delta(T)}_X}{(T-t+1)^\beta}
\right),\quad t\in[0,T].
\end{equation}}
\end{proposition}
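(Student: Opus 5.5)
Pass to the complex variables $\theta(t)=\pi_1\Delta(t)$, which solve \eqref{theta_c}. Then expand $\theta(t)$ in the eigenvectors $\theta_{\pm k}^\pm$, or equivalently in the Riesz basis of Proposition~\ref{thm_Riesz} through $\pi_0$. By Lemma~\ref{statement_CD}, for each $k$ the pair $(\theta_k^z,\theta_k^p)$ spans the same plane as $(\theta_{-k}^+,\theta_{-k}^-)$, and $(\theta_k^\zeta,\theta_k^q)$ spans the same plane as $(\theta_k^+,\theta_k^-)$. Write
$$
\theta(t)=\sum_k\bigl(\alpha_{-k}^+(t)\theta_{-k}^++\alpha_{-k}^-(t)\theta_{-k}^-+\alpha_{k}^+(t)\theta_{k}^++\alpha_{k}^-(t)\theta_{k}^-\bigr),\qquad \alpha(t)=e^{\sigma t}\alpha(0).
$$
The stable modes are $\sigma_k^-$ and $\sigma_{-k}^-$, with real part $-\Re\nu_k$. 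The unstable modes are $\sigma_k^+$ and $\sigma_{-k}^+$, with real part $+\Re\nu_k$. The Riesz-basis coefficients $a_k=(a_k^z,a_k^\zeta,a_k^p,a_k^q)$ are obtained from the eigen-coefficients by the $2\times2$ blocks of Lemma~\ref{statement_CD}, with entries of size $\nu_k^2/(b_k\Re\nu_k)$ or $\nu_k^2/b_k$, and by the inverse blocks with entries $b_k/\nu_k^2$ and $b_k\Re\nu_k/\nu_k^2$. By \eqref{kappa_est} and Remark~\ref{rem_lambda_asympt}, $\nu_k\asymp\Re\nu_k\asymp|b_k|$. After rescaling each eigenvector by $b_k/\nu_k^2$, the change of coordinates is therefore uniformly bounded with a uniformly bounded inverse, blockwise in $k$. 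Hence $\norm{a}^2\asymp\sum_k|\alpha_k^{\mathrm s}|^2+|\alpha_k^{\mathrm u}|^2$, where the $\alpha$'s are the rescaled stable and unstable coefficients.

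\textbf{Step 1: express the coefficients through the boundary data.} The $k$-th stable coefficient at time $t$ equals $e^{-\Re\nu_k t}$ times a phase times its value at time $0$. The $k$-th unstable coefficient equals $e^{-\Re\nu_k(T-t)}$ times a phase times its value at time $T$. The stable coefficients at $0$ come from $\theta(0)$, and the unstable coefficients at $T$ come from $\theta(T)$. The obstacle is that $\theta(0)$ also contains unstable components and $\theta(T)$ stable ones. I would not use a shooting argument; instead I would bound each coefficient trivially. Since $\pi_0^{-1}$ is bounded on $X$ and the blocks are uniform, $|\alpha_k^{\mathrm s}(0)|\le\norm{a(0)}_{\ell^2}\lesssim\norm{\Delta(0)}_X$ modewise, and likewise $|\alpha_k^{\mathrm u}(T)|\lesssim\norm{\Delta(T)}_X$. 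More precisely, the $\ell^2$ sequences $\{\alpha_k^{\mathrm s}(0)\}_k$ and $\{\alpha_k^{\mathrm u}(T)\}_k$ have norms $\lesssim\norm{\Delta(0)}_X$ and $\lesssim\norm{\Delta(T)}_X$ respectively, using \eqref{pi1norm}.

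\textbf{Step 2: decay in the weighted norm.} Apply Lemma~\ref{lem_polynomial} and Remark~\ref{rem_backward}: $e^{-\Re\nu_k t}\le C_\beta|b_k|^{-\beta}(t+1)^{-\beta}$. This gives
$$
\sum_k |b_k|^{2\beta}|\alpha_k(t)|^2\le C\Bigl(\frac{\norm{\Delta(0)}_X^2}{(t+1)^{2\beta}}+\frac{\norm{\Delta(T)}_X^2}{(T-t+1)^{2\beta}}\Bigr).
$$
The modal coefficients thus decay in the weighted sequence space with weight $|b_k|^{\beta}$. Since $|b_k|$ is bounded, this weight dominates $|b_k|^{\rho}=|b_k|^{\beta+1}$.

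\textbf{Step 3: return to $\theta$, the main obstacle.} We must show that the synthesis $a\mapsto\pi_0a$ maps the weighted sequence space $\{\sum|b_k|^{2\rho}|a_k|^2<\infty\}$ boundedly into $V_{0,-\rho}^X$. This is exactly Lemma~\ref{lem:pi0-weighted} with $\rho=\beta+1$: $\pi_0$ is bounded on $V^X_{0,-\rho}$ under \eqref{A5}, and the coefficient space of $V^X_{0,-\rho}$ carries the same weight $|b_k|^{\rho}$. The extra factor $|b_k|^{1}$ between $\beta$ and $\rho$ is harmless because $b\in\ell^\infty$. I expect the delicate point to be verifying this compatibility of weights, together with checking that the rescaled $2\times2$ blocks commute with the diagonal weights, which holds because they act within a single $k$. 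Finally, $\pi_1^{-1}$ acts diagonally in $k$, hence is bounded on $V_{0,-\rho}$, and $\norm{\Delta(t)}_{V_{0,\gamma}^X}\le\norm{\pi_1^{-1}}\,\norm{\theta(t)}_{V_{0,-\rho}^X}$, which yields \eqref{turnpike}.
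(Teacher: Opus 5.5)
Your proof rests on the claim, made before Step~1 and used in it, that after rescaling the eigenvectors, the blockwise change of coordinates between the Riesz coefficients $(a_k^z,a_k^p)$ (resp.\ $(a_k^\zeta,a_k^q)$) and the stable/unstable eigen-coefficients is uniformly bounded with a uniformly bounded inverse. This claim is false.

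By Lemma~\ref{statement_CD}, the normalized eigenvectors in the $k$-th $(z,p)$-plane are $e_k^\pm=\theta_k^p\pm\Re\nu_k\,\theta_k^z$, namely $\overline{\nu_k}^{\,2}b_k^{-1}\theta_{-k}^+$ and $\nu_k^2b_k^{-1}\theta_{-k}^-$. In the uniformly Riesz pair $\{\theta_k^z,\theta_k^p\}$ they have coordinates $(\pm\Re\nu_k,1)$. So the block has determinant $2\Re\nu_k\asymp|b_k|\to0$, and the two eigen-directions become parallel.

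The size of the stable and unstable \emph{components} of a vector does not depend on how the eigenvectors are scaled, so no rescaling can fix this. The spectral projections are exactly the matrices $g_k^\mp$ in \eqref{g_k}, with norm $(1+(\Re\nu_k)^2)/(2\Re\nu_k)\asymp|b_k|^{-1}$. This is also why the eigenvectors themselves are not a Riesz basis and the paper uses the combinations \eqref{Riesz_basis}.

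Concretely, $\theta(0)=\theta_k^z=(e_k^+-e_k^-)/(2\Re\nu_k)$ has $\norm{\theta(0)}_X=1+o(1)$, but its stable and unstable components have norm $\asymp|b_k|^{-1}$. Hence $\{\alpha_k^{\mathrm s}(0)\}_k$ is \emph{not} bounded in $\ell^2$ by $\norm{\Delta(0)}_X$, and the weighted display of Step~2 is unproved. For $\beta<1$, as in the beam example, it is even false. Normalize the eigenvectors as $e_k^\pm$ and take $\theta(0)=\theta_k^z$, $T=1$, $t=0$. Then the left-hand side is $\asymp|b_k|^{2\beta-2}\to\infty$, while the right-hand side stays bounded.

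What does hold is $|\alpha_k^{\mathrm s}(0)|\lesssim|b_k|^{-1}\norm{a_k(0)}$ and $|\alpha_k^{\mathrm u}(T)|\lesssim|b_k|^{-1}\norm{a_k(T)}$. So the modal bound carries $|b_k|^{-\beta-1}$ rather than $|b_k|^{-\beta}$. This is exactly the paper's estimate \eqref{a_est_modal}, obtained from \eqref{a_zeta_q_exp} and \eqref{g_norm}. It forces the weight $|b_k|^{\beta+1}=|b_k|^{\rho}$, which is precisely the space $V_{0,-\rho}^X$ where Lemma~\ref{lem:pi0-weighted} applies.

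So the factor $|b_k|$ between $\beta$ and $\rho$ that you call harmless is the factor that pays for the ill-conditioned stable/unstable splitting. Your final inequality is correct only because the slack you discard happens to equal the loss you overlooked. With Step~1 corrected in this way, your Steps~2--3 go through and reproduce the paper's proof.

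Alternatively, you can keep the weight $|b_k|^{\beta}$ by estimating $a_k(t)$ directly instead of the eigen-coefficients. The two eigenvalues of a block share their imaginary part, so each block equals $ic_kI+\begin{pmatrix}0&(\Re\nu_k)^2\\ 1&0\end{pmatrix}$ with $c_k\in\mathbb R$. Hence
\[
a_k(t)=\frac{\sinh(\Re\nu_k\,(T-t))}{\sinh(\Re\nu_k\,T)}\,e^{ic_kt}a_k(0)+\frac{\sinh(\Re\nu_k\,t)}{\sinh(\Re\nu_k\,T)}\,e^{-ic_k(T-t)}a_k(T),
\]
which yields $\norm{a_k(t)}\le e^{-\Re\nu_k t}\norm{a_k(0)}+e^{-\Re\nu_k(T-t)}\norm{a_k(T)}$ with no loss.
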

\begin{proof}
{
For a solution $\Delta(\cdot)\in C([0,T];X)$ of~\eqref{delta_op}, consider the corresponding solution $\theta(\cdot)\in C([0,T];X)$ of~\eqref{theta_c}.  Then the function $a(t)=\pi_0^{-1}\theta(t)$, where $\pi_0:X\to X$ is defined in~\eqref{pi0t}}, satisfies the following abstract differential equation:
\begin{equation}\label{aR_abstract}
\dot a(t) = {\cal A}_R a(t),\quad a(t)\in X,
\end{equation}
where
$$
\begin{aligned}
D({\cal A}_R) = \{a\in X\,|\,&\sum_{k=1}^\infty \omega_k^2 \left( |a_k^z|^2+|a_k^\zeta|^2+|a_k^p|^2+|a_k^q|^2 \right)\\
&<\infty\}.
\end{aligned}
$$
In coordinates, \eqref{aR_abstract} takes the form:
\begin{equation}\label{a_zp}
\begin{aligned}
&\frac{d}{dt}\begin{pmatrix}a_k^z \\ a_k^p\end{pmatrix} = A_k^{z p} \begin{pmatrix}a_k^z \\ a_k^p\end{pmatrix},\;
\frac{d}{dt}\begin{pmatrix}a_k^\zeta \\ a_k^q\end{pmatrix} = A_k^{\zeta q} \begin{pmatrix}a_k^\zeta \\ a_k^q\end{pmatrix},\; k\in {\mathbb N},
\end{aligned}
\end{equation}
\begin{equation}\label{AMN}
\begin{aligned}
&A_k^{z p} = \left( M_k^\top \right)^{-1} \Sigma_{-k} M_k^\top,\; A_k^{\zeta q} = \left( N_k^\top \right)^{-1} \Sigma_{k} N_k^\top,\\
&\Sigma_{-k} = \begin{pmatrix}
\sigma_{-k}^+ & 0 \\
0 & \sigma_{-k}^-
\end{pmatrix},\;
\Sigma_{k} = \begin{pmatrix}
\sigma_{k}^+ & 0 \\
0 & \sigma_{k}^-
\end{pmatrix},\\
&M_k = \frac{1}{2b_k}\begin{pmatrix}
\frac{\overline{\nu_k}^2}{\Renu} & - \frac{\nu_k^2}{\Renu} \\
\overline{\nu_k}^2 & {\nu_k}^2
\end{pmatrix}, \qquad
N_k = -\frac{1}{2b_k}\begin{pmatrix}
\frac{\nu_k^2}{\Renu} & - \frac{\overline{\nu_k}^2}{\Renu} \\
\nu_k^2 & \overline{\nu_k}^2
\end{pmatrix}.
\end{aligned}
\end{equation}

By introducing the change of variables
\begin{equation}\label{chi_upsilon_a}
\begin{pmatrix}\upsilon_k \\ \chi_k
\end{pmatrix} =
N_k^\top
\begin{pmatrix}a_k^\zeta \\ a_k^q
\end{pmatrix},\;
\begin{pmatrix}\tilde\upsilon_k \\ \tilde\chi_k
\end{pmatrix} =
M_k^\top
\begin{pmatrix}a_k^z \\ a_k^p
\end{pmatrix},
\end{equation}
the system~\eqref{a_zp} is written as
\begin{equation}\label{chi_upsilon_k}
\begin{aligned}
&\dot \chi_k(t) = \sigma_k^- \chi_k(t),\;
\dot \upsilon_k(t) = \sigma_k^+ \upsilon_k (t),\\
&\dot {\tilde\chi}_k(t) = \sigma_{-k}^- \tilde\chi_k(t),\;
\dot {\tilde\upsilon}_k(t) = \sigma_{-k}^+ \tilde\upsilon_k (t),
\quad k\in{\mathbb N},
\end{aligned}
\end{equation}
where the $\sigma_{\pm k}^\pm$ are defined in~\eqref{eigenval_asympt}.

The abstract formulation of the system~\eqref{chi_upsilon_k} leads to
\begin{equation}\label{chi_upsilon}
\begin{aligned}
&\dot \chi(t) = {\cal A}_\chi \chi(t),\\
&\dot \upsilon(t)= {\cal A}_\upsilon \upsilon(t),\\
&\dot {\tilde\chi}(t) = \tilde{\cal A}_\chi \tilde\chi(t),\\
&\dot {\tilde\upsilon}(t)= \tilde{\cal A}_\upsilon \tilde\upsilon(t),\; \chi(t), \upsilon(t), \tilde\chi(t), \tilde\upsilon(t) \in \ltwoC,
\end{aligned}
\end{equation}
or, equivalently,
\begin{equation}\label{chi_upsilon_tau}
\dot \tau(t) = {\cal A}_\tau \tau(t),\quad \tau(t)\in X = (\ltwoC)^4.
\end{equation}
Here, the linear operators ${\cal A}_\chi:D({\cal A}_\chi)\to \ltwoC$, ${\cal A}_\upsilon:D({\cal A}_\upsilon)\to \ltwoC$,  $\tilde{\cal A}_\chi:D(\tilde{\cal A}_\chi)\to \ltwoC$, $\tilde{\cal A}_\upsilon:D(\tilde{\cal A}_\upsilon)\to \ltwoC$ are represented by their diagonal matrices:
$$
\begin{aligned}
{\cal A}_\chi &= {\rm diag}(\sigma_1^-,\sigma_2^-,...),\; {\cal A}_\upsilon = {\rm diag}(\sigma_1^+,\sigma_2^+,...),\\
\tilde{\cal A}_\chi &= {\rm diag}(\sigma_{-1}^-,\sigma_{-2}^-,...),\; \tilde {\cal A}_\upsilon = {\rm diag}(\sigma_{-1}^+,\sigma_{-2}^+,...),\\
D({\cal A}_\chi) &= D({\cal A}_\upsilon) = D(\tilde{\cal A}_\chi) 
= D(\tilde{\cal A}_\upsilon)
= \{\chi\in\ltwoC\,|\,\sum_{k=1}^\infty \omega_k^2 |\chi_k|^2 <\infty\},
\end{aligned}
$$
and ${\cal A}_\tau: D({\cal A}_\tau) = D({\cal A}_\chi)\times D({\cal A}_\upsilon) \times D(\tilde{\cal A}_\chi) \times D(\tilde{\cal A}_\upsilon)\to X$ is defined by the rule
$$
D({\cal A}_\tau) \ni
\tau = \begin{pmatrix}\chi \\ \upsilon \\ \tilde\chi \\ \tilde \upsilon\end{pmatrix} \mapsto {\cal A}_\tau =
\begin{pmatrix}{\cal A}_\chi\chi \\ {\cal A}_\upsilon\upsilon \\ \tilde{\cal A}_\chi \tilde\chi \\ \tilde{\cal A}_\upsilon \tilde \upsilon\end{pmatrix}\in X.
$$

Combining~\eqref{chi_upsilon_k} and~\eqref{chi_upsilon_tau}, we conclude that the coordinates of each solution $\tau(t)\in X$ of~\eqref{chi_upsilon_tau} can be represented as follows:
\begin{equation}\label{chi_upsilon_exp}
\begin{aligned}
&\chi_k(t) = \chi_k(0)e^{\sigma_k^- t},\; \upsilon_k(t) = \upsilon_k(T)e^{\sigma_{k}^+ (t-T)},\; t\in [0,T],\\
&\tilde\chi_k(t) = \tilde\chi_k(0)e^{\sigma_{-k}^- t},\; \tilde\upsilon_k(t) = \tilde\upsilon_k(T)e^{\sigma_{-k}^+ (t-T)}, \;k\in{\mathbb N}.
\end{aligned}
\end{equation}
Formulas~\eqref{chi_upsilon_exp} allow representing the coordinates of the corresponding solution $a(t)$ of~\eqref{aR_abstract} with the use of transformations~\eqref{chi_upsilon_a}:
\begin{equation}\label{a_exp4}
\begin{pmatrix}a^\zeta_k(t) \\ a^q_k(t)\end{pmatrix}= \left(N_k^\top\right)^{-1}\begin{pmatrix}e^{\sigma_{k}^+ (t-T)} & 0 \\ 0 & e^{\sigma_k^- t} \end{pmatrix} \tilde N_k \begin{pmatrix}a^\zeta_k(0) \\ a^q_k(0) \\ a^\zeta_k(T) \\ a^q_k(T)\end{pmatrix},
\end{equation}
where
$$
\tilde N_k = \begin{pmatrix}0 & 0 & n^k_{11} & n^k_{21} \\
n^k_{12} & n^k_{22} & 0 & 0\end{pmatrix},\;
\begin{pmatrix}n^k_{11} & n^k_{12} \\
n^k_{21} & n^k_{22}
\end{pmatrix}= N_k,\; k\in{\mathbb N},
$$
and the matrices $N_k$ are defined by~\eqref{AMN}.
For further analysis, we rewrite~\eqref{a_exp4} in the form
{
\begin{equation}\label{a_zeta_q_exp}
a^{\zeta,q}_k(t) = g_k^- a^{\zeta,q}_k(0) e^{\sigma_{k}^- t}  +
g_k^+ a^{\zeta,q}_k(T) e^{\sigma_{k}^+ (t-T)},
\end{equation}}
with
{
\begin{equation}\label{g_k}
\begin{aligned}
& a^{\zeta,q}_k(t)=\begin{pmatrix}a^\zeta_k(t) \\ a^q_k(t)\end{pmatrix},\quad
g_k^- = \frac12 \begin{pmatrix} 1 & -\Renu \\ -\frac{1}{\Renu} & 1\end{pmatrix},\quad
g_k^+ = \frac12 \begin{pmatrix} 1 & \Renu \\ \frac{1}{\Renu} & 1\end{pmatrix}.
\end{aligned}
\end{equation}}
Analogously, from~\eqref{chi_upsilon_exp} and~\eqref{chi_upsilon_a}, we obtain the following representation of
{$
a^{z,p}_k(t)=\begin{pmatrix}a^z_k(t) \\ a^p_k(t)\end{pmatrix}
$}
for all $k\in {\mathbb N}$:
{
\begin{equation}\label{a_z_p_exp}
a^{z,p}_k(t)= g_k^- a^{z,p}_k(0) e^{\sigma_{-k}^- t}  +
g_k^+ a^{z,p}_k(T) e^{\sigma_{-k}^+ (t-T)}.
\end{equation}}

{
A straightforward evaluation of the matrix norms for~\eqref{g_k} shows that
\begin{equation}\label{g_norm}
\|g_k^+\| =  \|g_k^-\| = \frac{1 + (\Renu)^2}{2|\Renu|} \lesssim 1/|b_k| \; \text{for all}\; k\in {\mathbb N}.
\end{equation}
By applying the estimates~\eqref{exp_poly_decay}, \eqref{exp_poly_decay_backward}, and~\eqref{g_norm} to~\eqref{a_zeta_q_exp}
we obtain
\begin{equation}\label{a_est_modal}
\Vert a_k^{\zeta,q}(t)\Vert
\leq  \frac{\tilde C_\beta}{|b_k|^{\beta+1}}
\left(\frac{\Vert a_k^{\zeta,q}(0)\Vert}{(t+1)^\beta}
+\frac{\Vert a_k^{\zeta,q}(T)\Vert}{(T-t+1)^\beta}\right),\,t\in[0,T],
\end{equation}
with some constant $\tilde C_\beta>0$, and the same estimate holds for $\Vert a_k^{z,p}(t)\Vert$.
Multiplying the modal estimate~\eqref{a_est_modal} by $|b_k|^{\beta+1}$ and summing the squares over $k$ yields the abstract bound
\begin{equation}\label{a_est}
\norm{a(t)}_{V_{0,\gamma}^X}
\leq \bar C_\beta\left(
\frac{\norm{a(0)}_{X}}{(t+1)^\beta}
+\frac{\norm{a(T)}_{X}}{(T-t+1)^\beta}
\right),
\end{equation}
for all $t\in [0,T]$, with some constant $\bar C_\beta>0$.
By Lemma~\ref{lem:pi0-weighted}, we can transfer the above weighted norms through the bounded isomorphisms $\pi_0$ and $\pi_0^{-1}$ on $V_{0,\gamma}^X$.
Therefore,~\eqref{a_est} yields the required inequality~\eqref{turnpike}.
}
\end{proof}

\subsection{End of the proof of Theorem~\ref{thm_turnpike_coordinates}}
Let $(\hat x,\hat\lambda,\hat\mu,\hat u)$ and $(x(t),\lambda(t),\mu(t),u(t))$ be solutions of Problem~\ref{problem_stat} and Problem~\ref{problem_dyn}, respectively, defined by~\eqref{static_minimizer},~\eqref{cs_op},~\eqref{sys_adjoint},~\eqref{u_extremal},
and let the discrepancy $\Delta(t)$ be defined by~\eqref{delta_notation}.
{
Then
$$
\Vert\Delta(t)\Vert_{V_{0,\gamma}^X}^2 = 2\Big(\Vert x(t)- \hat x\Vert_{V_{0,\gamma}}^2+\Vert \Lambda(t) - \hat \Lambda \Vert_{V_{0,\gamma}}^2\Big),
$$
with $\gamma=-\beta-1$.
Hence, by Proposition~\ref{thm_turnpike},
there exists a constant $\tilde C>0$ such that, for any $T>0$,
\begin{equation}\label{dich}
\Vert x(t)- \hat x\Vert_{V_{0,-\beta-1}}+\Vert \Lambda(t) - \hat \Lambda \Vert_{V_{0,-\beta-1}}
\quad\leq \tilde C\left(\frac{\Vert\Delta(0)\Vert_X}{(t+1)^\beta}+\frac{\Vert\Delta(T)\Vert_X}{(T-t+1)^\beta}\right)\;\text{for all}\; t\in[0,T].
\end{equation}
To conclude the proof of Theorem~\ref{thm_turnpike_coordinates}, we need the following two auxiliary results.}

\begin{lemma}\label{lem_coshbound}
Let $\{\nu_k\}_{k\geq 1}$ be defined in Lemma~\ref{lemma_eigenvalues_asympt}. Then there exist constants $C_{\tanh},C_{\cosh}>0$ such that
\begin{equation}\label{cosh_bound}
\sup_{k\geq 1}\sup_{s\geq 0}|\tanh(\nu_k s)|\leq C_{\tanh},\;
\sup_{k\geq 1}\sup_{s\geq 0}\frac{1}{|\cosh(\nu_k s)|}\leq C_{\cosh}.
\end{equation}
%\textcolor{blue}{(ET: For the sequence $(\nu_k)$: Lemma~\ref{lemma_eigenvalues_asympt} gives $|\mathrm{Im}\,\nu_k|/(\Re \nu_k)=\mathrm{O}(1/\omega_k)$, hence $|\arg(\nu_k)|\leq \pi/4$ for all $k$ large enough; the finitely many remaining indices can be absorbed into the constants by enlarging $C_{\tanh},C_{\cosh}$.)}
\end{lemma}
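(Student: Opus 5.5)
The plan is to reduce both bounds to a single geometric fact: the numbers $\nu_k$ lie uniformly inside a sector around the positive real axis. Concretely, I would show there is $\delta\in(0,\pi/2)$ with $|\arg \nu_k|\leq \delta$ for all $k$. Lemma~\ref{lemma_eigenvalues_asympt} gives $\nu_k=\lambda_k^0+\varepsilon_k$ with $\lambda_k^0>0$ and $|\varepsilon_k|<\lambda_k^0/2$. Hence $\nu_k$ lies in the open disc of radius $\lambda_k^0/2$ centred at $\lambda_k^0$. Every point of that disc satisfies $|\arg\nu_k|<\arcsin(1/2)=\pi/6$. This holds for every $k$, not only asymptotically, so no finite exceptional set needs to be absorbed; I take $\delta=\pi/6$.

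Next I would write $w=\nu_k s=x+iy$ with $x\geq 0$ and $|y|\leq x\tan\delta$, and work directly with the identity
$$|\cosh w|^2=\sinh^2 x+\cos^2 y .$$
For the $\cosh$ bound, I split into two cases. If $x\geq 1$, then $|\cosh w|\geq \sinh 1$. If $x\leq 1$, then $|y|\leq \tan\delta\leq 1/\sqrt3<\pi/2$, so $\cos^2 y\geq \cos^2(1/\sqrt3)>0$. In both cases $|\cosh w|$ is bounded below by a positive absolute constant $c$, and I set $C_{\cosh}=1/c$.

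For the $\tanh$ bound, I would use $|\sinh w|^2=\sinh^2x+\sin^2 y\leq \cosh^2 x$. Combined with the lower bound above:
\begin{itemize}
\item when $x\geq1$, $|\tanh w|\leq \cosh x/\sinh x\leq \coth 1$;
\item when $x\leq 1$, $|\tanh w|\leq \cosh 1/c$.
\end{itemize}
Both constants are independent of $k$ and $s$, which gives $C_{\tanh}$.

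The main obstacle is the uniform sector bound, and specifically making sure $\arg\nu_k$ stays bounded away from $\pm\pi/2$. If instead $\arg\nu_k$ approached $\pm\pi/2$, zeros of $\cosh$ at $w=i\pi/2$ could be approached and the constants would blow up. Lemma~\ref{lemma_eigenvalues_asympt} settles this cleanly, since $\pi/6$ leaves plenty of room. Alternatively, one could use the remark that $\mathrm{Im}\,\nu_k/\Re\nu_k\to0$ for large $k$ and treat the finitely many remaining indices by continuity in $s$ together with the fact that $\Re\nu_k>0$.
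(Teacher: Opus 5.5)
Your argument is correct in substance and differs from the paper's, but one claim needs repair. Lemma~\ref{lemma_eigenvalues_asympt} does \emph{not} give $|\varepsilon_k|<\lambda_k^0/2$ for every $k$. It is an asymptotic statement: its Rouch\'e argument verifies condition~\eqref{S_condition} only when $x=b_k^2(1+\varkappa)/2$ is small, i.e.\ for $k\geq k_0$. So ``no finite exceptional set needs to be absorbed'' is unjustified; for $k<k_0$ you only know $\Re\nu_k>0$.

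The repair you mention at the end is exactly what the paper does. For each fixed $k$, $s\mapsto\tanh(\nu_k s)$ and $s\mapsto 1/\cosh(\nu_k s)$ are continuous. Moreover $\cosh(\nu_k s)\neq 0$, since $\Re(\nu_k s)>0$ for $s>0$. Finally, these functions tend to $1$ and $0$ as $s\to\infty$. The ratio $\mathrm{Im}\,\nu_k/\Re\nu_k\to 0$ that you also invoke is not needed for this.

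Alternatively, your own method absorbs these indices directly. All $\nu_k$ lie in one sector $|\arg z|\leq\delta$ with $\delta=\max(\pi/6,\max_{k<k_0}|\arg\nu_k|)<\pi/2$. Your case split then works with the threshold $x_0=\pi/(4\tan\delta)$ in place of $x=1$. For $x\leq x_0$ one gets $|y|\leq\pi/4$, hence $|\cosh w|\geq 1/\sqrt2$. For $x\geq x_0$ one gets $|\cosh w|\geq\sinh x_0$.

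With that fixed, here is the comparison. The paper places $\nu_k$ in the sector $|\arg z|\leq\pi/4$, using $|\mathrm{Im}\,\nu_k|<\lambda_k^0/2<\Re\nu_k$. It then proves a general sector bound for any angle $\theta<\pi/2$ by splitting on $|z|$:
\begin{itemize}
\item for $|z|\geq R$, the bound $|e^{-2z}|\leq e^{-2R\cos\theta}<1$ controls $\tanh z=(1-e^{-2z})/(1+e^{-2z})$ and $1/\cosh z$;
\item for $|z|\leq R$, it uses compactness and the fact that $\cosh$ has no zeros off the imaginary axis, so the constants there are not explicit.
\end{itemize}
You instead get the sharper angle $\pi/6$ from the disc geometry. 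You then use the exact identities $|\cosh(x+iy)|^2=\sinh^2x+\cos^2y$ and $|\sinh(x+iy)|^2=\sinh^2x+\sin^2y\leq\cosh^2x$, splitting on $\Re w$ rather than $|w|$. This is more elementary, avoids the compactness step, and gives explicit constants: $C_{\cosh}=1/c$ with $c=\min(\sinh 1,\cos(1/\sqrt3))$, and $C_{\tanh}=\max(\coth 1,\cosh 1/c)$. The paper's formulation for an arbitrary sector angle is what makes it insensitive to the precise localization of $\nu_k$. Your approach achieves the same once the threshold is allowed to depend on $\delta$ as above.
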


\begin{proof}
{By Lemma~\ref{lemma_eigenvalues_asympt}, for all $k$ large enough we have $\nu_k=\lambda_k^0+\varepsilon_k$, $\lambda_k^0>0$ and $|\varepsilon_k|<\frac{\lambda_k^0}{2}$. Hence, for such $k$, we have
$\Re \nu_k \geq \lambda_k^0-|\varepsilon_k|>\frac{\lambda_k^0}{2}>0$ and $|\mathrm{Im}\, \nu_k|\leq |\varepsilon_k|<\frac{\lambda_k^0}{2}\leq \Re \nu_k$.
In particular, $\nu_k$ belongs to the closed sector
$S_{\theta_0}=\{z\in\mathbb C\ \mid\ |\arg z|\leq \theta_0\}$ with $\theta_0=\frac{\pi}{4}$,
for all $k\geq k_0$ (some $k_0\in\mathbb N$). For the finitely many indices $1\leq k<k_0$,
we simply increase the constants at the end of the proof; hence it is enough to prove that
\eqref{cosh_bound} holds uniformly for all $z$ in $S_{\theta_0}$.}

Now, fix any $\theta\in(0,\pi/2)$ and consider the sector $S_\theta=\{z\ \mid\ |\arg z|\leq\theta\}$.
We claim that there exist constants $C_{\tanh}(\theta),C_{\cosh}(\theta)>0$ such that
\begin{equation}\label{eq:sector-bounds}
\sup_{z\in S_\theta}|\tanh z|\leq C_{\tanh}(\theta),\qquad
\sup_{z\in S_\theta}\frac{1}{|\cosh z|}\leq C_{\cosh}(\theta).
\end{equation}
Indeed, let $R>0$ and set $q=e^{-2R\cos\theta}\in(0,1)$. For any $z\in S_\theta$ with $|z|\geq R$,
we have $\Re z \geq |z|\cos\theta\geq R\cos\theta$, hence $|e^{-2z}|=e^{-2\Re z}\leq q$.
Using the identities
$$
\tanh z=\frac{1-e^{-2z}}{1+e^{-2z}},
\qquad
\cosh z=\frac{e^{z}}{2}\,\bigl(1+e^{-2z}\bigr),
$$
we obtain, for $|z|\geq R$,
$$
|\tanh z|\leq \frac{1+|e^{-2z}|}{1-|e^{-2z}|}\leq \frac{1+q}{1-q},
$$
and
$$
\frac{1}{|\cosh z|}
\leq \frac{2e^{-\Re z}}{|1+e^{-2z}|}
\leq \frac{2e^{-\Re z}}{1-|e^{-2z}|}
\leq \frac{2e^{-R\cos\theta}}{1-q}.
$$
On the compact set $K_\theta(R)=\{z\in S_\theta\ \mid\ |z|\leq R\}$, the function $\tanh$ is continuous,
hence $M_1=\sup_{z\in K_\theta(R)}|\tanh z|<\infty$.
Moreover, $\cosh z\neq0$ for all $z\in S_\theta$ because the zeros of $\cosh$ are
$\{i(\pi/2+\pi n)\ \mid\ n\in\mathbb Z\}$, which lie on the imaginary axis, and $\theta<\pi/2$.
Therefore $1/\cosh$ is continuous on $K_\theta(R)$ and
$M_2=\sup_{z\in K_\theta(R)}1/|\cosh z|<\infty$.

Combining the bounds on $|z|\leq R$ and $|z|\geq R$, we can set
$$
C_{\tanh}(\theta)=\max\Big(M_1,\frac{1+q}{1-q}\Big),\qquad
C_{\cosh}(\theta)=\max\Big(M_2,\frac{2e^{-R\cos\theta}}{1-q}\Big),
$$
which proves \eqref{eq:sector-bounds}.

\medskip

{Let us now conclude.
Since $\nu_k\in S_{\theta_0}$ for all $k\geq k_0$, we have $\nu_k s\in S_{\theta_0}$ for all $s\geq 0$.
Applying \eqref{eq:sector-bounds} with $\theta=\theta_0$ gives, for all $k\geq k_0$,
$$
\sup_{s\geq 0}|\tanh(\nu_k s)|\leq C_{\tanh}(\theta_0),\;
\sup_{s\geq 0}\frac{1}{|\cosh(\nu_k s)|}\leq C_{\cosh}(\theta_0).
$$
For the finitely many indices $1\leq k<k_0$, each quantity
$\sup_{s\geq 0}|\tanh(\nu_k s)|$ and $\sup_{s\geq 0}1/|\cosh(\nu_k s)|$ is finite because $\Re\nu_k>0$
(no zeros of $\cosh$ on the ray $\nu_k\mathbb R_+$ and exponential growth as $s\to\infty$).
Taking the maximum of these finitely many values with the constants above yields global constants
$C_{\tanh},C_{\cosh}$ such that \eqref{cosh_bound} holds for all $k\geq 1$.}
\end{proof}

\begin{proposition}\label{prop_shoot}
Assume \eqref{A1}, \eqref{A2}, and~\eqref{A3}.
%There exists a constant $\bar C>0$, independent of $T$, such that the associated solution $\Delta$ of the optimality system satisfies
{There exists a constant $\bar C>0$, independent of $T$, such that for every solution $(X,\Lambda,u)$ of Problem~\ref{problem_dyn}, the associated solution $\Delta$ of the optimality system satisfies}%
\footnote{The inequality \eqref{ineq_shoot} is a ``shooting estimate", in the spirit of~\cite[Lemma~2]{trelat2018steady}.}:
\begin{equation}\label{ineq_shoot}
\Vert\Delta(0)\Vert_X+\Vert\Delta(T)\Vert_X
\leq \bar C\Big(\Vert x(0)-\hat x\Vert_{V_{0,1}}+\Vert\hat\lambda\Vert_{\ell^2}+\Vert\hat\mu\Vert_{\ell^2}\Big).
\end{equation}
\end{proposition}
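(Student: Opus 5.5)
The plan is to treat \eqref{ineq_shoot} as a two-point boundary value problem for the Hamiltonian system~\eqref{delta_op}. The data are the state deviation at $t=0$ and the costate deviation at $t=T$. The unknowns are the complementary boundary values.

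\textbf{Boundary data.} By~\eqref{delta_notation} and $\Lambda(T)=0$, we have $\Delta(0)=(x(0)-\hat x,\,\Lambda(0)-\hat\Lambda)$ and $\Delta(T)=(x(T)-\hat x,\,-\hat\Lambda)$. The terms $\|x(0)-\hat x\|_H\le \|b\|_{\ell^\infty}\|x(0)-\hat x\|_{V_{0,1}}$ and $\|\hat\Lambda\|_H=(\|\hat\lambda\|^2_{\ell^2}+\|\hat\mu\|^2_{\ell^2})^{1/2}$ are already of the right form. It therefore remains to bound $\|\Lambda(0)-\hat\Lambda\|_H$ and $\|x(T)-\hat x\|_H$ by the right-hand side of~\eqref{ineq_shoot}, uniformly in $T$.

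\textbf{Step 1: diagonalize and solve mode by mode.} I would pass to $\theta=\pi_1\Delta$ and then to $a=\pi_0^{-1}\theta$. I would use the exact modal representation from the proof of Proposition~\ref{thm_turnpike}, namely~\eqref{a_zeta_q_exp} and~\eqref{a_z_p_exp}. The diagonal entries of the blocks $\Pi_k\theta_k^s$ computed in Lemma~\ref{lem_quadratic} show that, up to quadratically small off-diagonal corrections, the components $(a^z,a^\zeta)$ play the role of the state deviation and $(a^p,a^q)$ the role of the costate deviation.

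In each $2\times 2$ block, for instance $(a_k^\zeta,a_k^q)$, I would impose the mixed boundary conditions:
\begin{itemize}
\item the state-like coordinate $a_k^\zeta(0)$ is prescribed;
\item the costate-like coordinate $a_k^q(T)$ is prescribed.
\end{itemize}
I would then solve explicitly for $a_k^q(0)$ and $a_k^\zeta(T)$ using the matrices $g_k^\pm$ of~\eqref{g_k}, which have eigenvalues $i\omega_k\pm\nu_k$-type. Because $g_k^\pm$ are explicit, this elimination produces closed-form expressions. In these, the phase $e^{i\omega_k T}$ factors out and the remaining dependence on $T$ is through $1/\cosh(\nu_k T)$ and $\tanh(\nu_k T)$. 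The factors $\Renu$ and $1/\Renu$ in $g_k^\pm$ should appear only in combinations with $\tanh(\nu_kT)$. These are bounded by Lemma~\ref{lem_coshbound}, together with $|\nu_k|/\Renu\le C$, which follows from Lemma~\ref{lemma_eigenvalues_asympt}.

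The resulting modal bound I aim for is
$$
|a_k^q(0)|+|a_k^\zeta(T)|\le C\bigl(|a_k^\zeta(0)|+|a_k^q(T)|\bigr),
$$
with $C$ independent of $k$ and $T$, and the same for the $(z,p)$ block. Summing the squares over $k$ and using that $\pi_0,\pi_0^{-1},\pi_1,\pi_1^{-1}$ are bounded on $X$ (Proposition~\ref{thm_Riesz} and~\eqref{pi1norm}), I would obtain $\|\Delta(0)\|_X+\|\Delta(T)\|_X\le C(\|P_s a(0)\|+\|P_c a(T)\|)$. Here $P_s$ and $P_c$ denote the projections onto the state-like and costate-like Riesz coordinates.

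\textbf{Step 2: return to the physical boundary data (main obstacle).} The quantities $P_s a(0)$ and $P_c a(T)$ are not exactly $\pi_1$ of $(\delta x(0),0)$ and $(0,-\hat\Lambda)$, because $\pi_0$ mixes the state and costate blocks. I would write $\pi_0=I+K$, where $K$ is Hilbert--Schmidt by Lemma~\ref{lem_quadratic}, and treat the mismatch as a compact perturbation of a well-posed boundary map. Injectivity of the perturbed map would follow from uniqueness of the optimal solution, since a zero boundary datum gives zero cost and hence $\Delta\equiv 0$. The real difficulty is uniformity in $T$: compactness alone gives only a $T$-dependent bound.

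To secure uniformity, I would complement the modal argument with the energy identity
$$
\frac{d}{dt}\langle \delta x,\delta\Lambda\rangle=\|\delta x\|_H^2+|B^*\delta\Lambda|^2 .
$$
This gives $\int_0^T(\|\delta x\|^2+|\delta u|^2)\,dt=\langle \delta x(T),-\hat\Lambda\rangle-\langle\delta x(0),\delta\Lambda(0)\rangle$. I would combine it with a comparison of $J_d$ against a competitor control that steers $x(0)$ to $\hat x$ in a fixed time $T_1>T_0$ and then stays at $(\hat x,\hat u)$. By Proposition~\ref{lem_approxcontr}, this competitor costs $\le C\|x(0)-\hat x\|^2_{V_{0,1}}$ above the steady cost. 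Expanding $J_d$ around $(\hat x,\hat u)$ via~\eqref{static_minimizer}, the linear terms collapse to the boundary term $\langle\hat\Lambda,\delta x(T)-\delta x(0)\rangle$.

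This yields a $T$-uniform bound on the $L^2(0,T)$ energy of $\Delta$ in terms of the right-hand side of~\eqref{ineq_shoot} and $\|\delta x(T)\|$. Feeding this energy bound into the modal formulas should control the finitely many low modes and the compact perturbation uniformly, while the high modes are handled by Step 1. Closing the circular dependence on $\|\delta x(T)\|$ with Young's inequality then gives~\eqref{ineq_shoot}.
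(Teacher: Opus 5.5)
\textbf{There is a genuine gap, and it is in Step~1.} Step~1 is the heart of your argument, and it follows the same modal shooting route the paper takes. The bound $|a_k^q(0)|+|a_k^\zeta(T)|\le C(|a_k^\zeta(0)|+|a_k^q(T)|)$, with $C$ independent of $k$ and $T$, is false.

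Set $r_k=\Renu$. From $g_k^\pm$ in \eqref{g_k}, the unstable direction of the $(a_k^\zeta,a_k^q)$ block is $(1,1/r_k)$ and the stable direction is $(1,-1/r_k)$. Solving the block exactly gives
\begin{align*}
a_k^q(0)&=-\frac{\tanh(r_kT)}{r_k}\,a_k^\zeta(0)+\frac{e^{-i(\omega_k+\Imnu)T}}{\cosh(r_kT)}\,a_k^q(T),\\
a_k^\zeta(T)&=\frac{e^{i(\omega_k+\Imnu)T}}{\cosh(r_kT)}\,a_k^\zeta(0)+r_k\tanh(r_kT)\,a_k^q(T).
\end{align*}
The factor $1/\Renu$ does come multiplied by $\tanh$. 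But $\tanh(r_kT)/r_k\ge\tanh(1)/r_k$ as soon as $T\ge 1/r_k$, and $r_k\asymp|b_k|\to0$. Neither Lemma~\ref{lem_coshbound} nor $|\nu_k|/\Renu\le C$ removes this factor; it simply reflects that the stable subspace of each block is $a_k^q=-a_k^\zeta/\Renu$. Hence your unweighted conclusion $\|\Delta(0)\|_X+\|\Delta(T)\|_X\le C(\|P_sa(0)\|+\|P_ca(T)\|)$ cannot hold uniformly in $T$.

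The missing ingredient is the one the paper uses. The relation $\Renu\asymp|b_k|$ turns $1/\Renu$ into the weight $1/|b_k|$ on the \emph{state} datum, giving $\|y_0\|_{H\times H}\le C_*(\|x(0)\|_{V_{0,1}\times V_{0,1}}+\|y(T)\|_{H\times H})$. This is exactly why \eqref{ineq_shoot} has $\|x(0)-\hat x\|_{V_{0,1}}$ on the right rather than an $H$-norm. In your framework you would also need $\pi_0^{\pm1}$ bounded on the positively weighted space $V_{0,1}^X$. This follows from \eqref{A3} by the Bari argument of Lemma~\ref{lem:pi0-weighted}: the off-diagonal entries become $\lesssim|b_k|/|\omega_j-\omega_k|$, and $\sum_k b_k^2S_k\le\bar C\|b\|^2_{\ell^2}$.

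\textbf{Step~2.} Your observation that $\pi_0$ mixes state and costate coordinates is correct: the physical boundary conditions are not modewise in the $a$-variables. The paper's proof does not deal with this; it simply asserts that the shooting operator is modewise diagonal in the coordinates $a$. However, your remedy does not close the argument.

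The energy identity plus the competitor does give a $T$-uniform bound on $\int_0^T(\|\delta x\|^2+|\delta u|^2)\,dt$. Averaging Duhamel's formula over $[T-1,T]$ and applying Young's inequality then bounds $\|x(T)-\hat x\|_H$. That is a clean proof of the $\Delta(T)$ half.

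It gives no $H$-bound on $\Lambda(0)-\hat\Lambda$, though:
\begin{itemize}
\item the identity only sees $\langle\delta x(0),\delta\Lambda(0)\rangle$;
\item integrating the adjoint equation backward costs a factor $\sqrt T$;
\item recovering $\delta\Lambda(0)$ from $\delta u=B^*\delta\Lambda$ via \eqref{obseq} yields only a $V_{0,-1}$ bound, since the observation norm is equivalent to the $V_{0,-1}$ norm by the two-sided Ingham estimate in the proof of Proposition~\ref{lem_approxcontr}.
\end{itemize}
Your claim that the high modes are handled by Step~1 rests on the false estimate above.

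What is needed is the weighted version of Step~1, together with a real proof that the coupled shooting map is invertible uniformly in $T$. One possible route is norm continuity of the perturbed map on $(0,\infty]$. This would use that the $1/\cosh$ and phase multipliers tend strongly to zero, hence in norm after composition with the Hilbert--Schmidt part $\pi_0-I$, plus injectivity of the limiting infinite-horizon problem.
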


\begin{proof}
We work in the complexified formulation. Set $\theta(t)=\pi_1\Delta(t)=(z(t),\zeta(t),p(t),q(t))\in X$ so that $\dot\theta=\mathcal A\theta$.
We also write $x(t)=(z(t),\zeta(t))$ and $y(t)=(p(t),q(t))$.
The boundary conditions are
$x(0)=x^0-\hat x$ and $y(T)=-(\hat p,\hat q)$,
where $\hat p=\hat\lambda+i\hat\mu$ and $\hat q=\hat\lambda-i\hat\mu$.

First, using Proposition~\ref{thm_Riesz} and the isomorphism $\pi_0$ defined by \eqref{pi0t}, in the coordinates given by $a(t)=\pi_0^{-1}\theta(t)=(a^z(t),a^\zeta(t),a^p(t),a^q(t))\in X$, the dynamics is block diagonal and, modewise, it is governed by the $2\times 2$ matrices $A_k^{z,p}$ and $A_k^{\zeta,q}$ given in \eqref{a_zp}.
Applying the modewise diagonalizers $M_k,N_k$ of~\eqref{AMN} yields the stable-unstable variables $(\chi_k,\upsilon_k,\widetilde\chi_k,\widetilde\upsilon_k)$ introduced in~\eqref{chi_upsilon_a}, propagated by~\eqref{chi_upsilon_exp}.

Collecting the unknown costate traces at $t=0$ into $y_0=(p(0),q(0))\in H\times H$, and the prescribed terminal trace as $y(T)=-(\hat p,\hat q)$, the boundary value problem can be written as a linear operator equation of the form $W_T y_0=r_T$, where $W_T$ is the corresponding shooting operator and $r_T$ depends linearly on $(x(0),y(T))$.

Let us now prove that the shooting operator is uniformly invertible.
In the above coordinates $a(t)$, the shooting operator is diagonal modewise, and its scalar multipliers are combinations of $\tanh(\nu_k T)$ and $1/\cosh(\nu_k T)$ times rational factors in $b_k$ and $\mathrm{Re}(\nu_k)$.
Lemma~\ref{lem_coshbound} gives a constant $C_*>0$, independent of $T$ and $k$, such that
$$
\sup_{k\geq 1}\sup_{T>0}\vert\tanh(\nu_k T)\vert\leq C_*,
\quad
\sup_{k\geq 1}\sup_{T>0}\frac{1}{\vert\cosh(\nu_k T)\vert}\leq C_*.
$$
Using also $\mathrm{Re}(\nu_k)\asymp \vert b_k\vert$ from Lemma~\ref{lemma_eigenvalues_asympt}, we obtain a uniform bound for the inverse shooting map, and thus
$$
\Vert y_0\Vert_{H\times H}\leq C_* \left(\Vert x(0)\Vert_{V_{0,1}\times V_{0,1}}+\Vert y(T)\Vert_{H\times H}\right).
$$

To conclude, let us infer bounds for $\Delta(0)$ and $\Delta(T)$.
Since $y(T)=-(\hat p,\hat q)$, we have
$\Vert y(T)\Vert_{H\times H}\leq 2 (\Vert\hat\lambda\Vert_{\ell^2}+\Vert\hat\mu\Vert_{\ell^2} )$.
The same diagonal representation of the flow gives a bound of the form
$$
\Vert x(T)\Vert_{H\times H}\leq C_* \left( \Vert x(0)\Vert_{V_{0,1}\times V_{0,1}}+\Vert y(T)\Vert_{H\times H}\right),
$$
with the same constant independent of $T$.
Finally, since $\pi_0$ and $\pi_1$ are fixed isomorphisms on $X$, the estimates for $(x(0),y(0))$ and $(x(T),y(T))$ transfer to $\Delta(0)$ and $\Delta(T)$ in the $X$-norm, and this yields~\eqref{ineq_shoot}.
\end{proof}

\medskip

Finally, the assertion of Theorem~\ref{thm_turnpike_coordinates} follows from inequalities~\eqref{dich} and~\eqref{ineq_shoot}. $\square$

\section{Application to a rotating flexible beam}\label{sec_beam}
Consider the Euler--Bernoulli beam of length $l$ attached to a rotating rigid body (hub).
The transverse displacement $w(x,t)$ of the beam at $x\in [0,l]$ with respect to the reference frame, fixed on the hub, is described by the following partial differential equation~(see, e.g.,~\cite[Chap.~4]{luo1999stability}, \cite[Chap.~3]{zuyev2015partial}):
\begin{equation}
\frac{\partial^2 w(x,t)}{\partial t^2} + c^2\frac{\partial^4
w(x,t)}{\partial x^4} = - (x+d)u(t),\quad x\in [0,l],\; t\geq 0,
\label{PDE}
\end{equation}
and the boundary conditions
\begin{equation}
w\vert_{x=0} =0,\; \left.\frac{\partial w}{\partial x}\right\vert_{x=0} =0,\;\left.\frac{\partial^2 w}{\partial x^2}\right\vert_{x=l} =0,\;\left.\frac{\partial^3 w}{\partial x^3}\right\vert_{x=l} =0.
\label{BC}
\end{equation}
The control $u(t) \in\mathbb R$ corresponds to the angular acceleration of the hub, $d>0$ is the radius of the hub, $c= \sqrt{EI/\rho}>0$,
$E$ is the Young's modulus, $I$ is the second moment of cross-section area, and $\rho$ is the mass per unit length of the beam.
The above control system corresponds to the case of a ``slow'' rotation of the hub, i.e. the higher-order terms are omitted in~\eqref{PDE}--\eqref{BC}.

Let the beam deflection be expanded as
\begin{equation}
 w(x,t) = \sum_{n=1}^{\infty} W_{n}(x)q_{n}(t),\quad x\in [0,l],
\label{modexp}
 \end{equation}
 where the $W_n$ are eigenfunctions of the spectral problem
 \begin{equation}
 \frac{d^4}{dx^4}W_{n}(x) = \lambda_{n}
 W_{n}(x),\;x\in [0,l],
 \label{SL_problem_in}
\end{equation}
\begin{equation}
 W_{n}(0)=W_{n}'(0)=W_{n}''(l)=W_{n}'''(l)=0.
 \label{SL_BC_in}
\end{equation}
The eigenvalues $\lambda_n$ and normalized eigenfunctions $W_n(x)$ of~\eqref{SL_problem_in}--\eqref{SL_BC_in} are expressed as~\cite[Chap.~3]{zuyev2015partial}:
\begin{equation}
\lambda_n = (\delta_n / l)^4,\qquad W_{n}(x)=k_{n}
\phi_n(x/l),\qquad
k_n = \pm {l}^{-1/2} \left( \|\phi_n\|_{L^2(0,1)} \right)^{-1},
\label{lambda_W}
\end{equation}
where
$$
 \phi_n(x)= -\frac{1+\gamma_n}{2} e^{\delta_n x} - \frac{1-\gamma_n}{2} e^{-\delta_n x} +\gamma_n\sin(\delta_n x)+\cos(\delta_n x),
$$
 \begin{equation}
 \gamma_n=-\frac{e^{\delta_n}-\sin\delta_n+\cos\delta_n}{e^{\delta_n}+\sin\delta_n+\cos\delta_n}<0,
 \label{phin}
\end{equation}
and $ \delta_1 <\delta_2<...<\delta_n<...$ are positive solutions of
 \begin{equation}
 1+\cos(\delta_n)\cosh(\delta_n) = 0.
 \label{TransEqBeta}
 \end{equation}
The eigenfunctions $\{W_n(x)\}_{n=1}^\infty$ defined by~\eqref{lambda_W} form an orthonormal basis in $L^2(0,l)$,
and $\|\phi_n\|_{L^2(0,1)}=O(1)$ as $n\to\infty$ (see~\cite[Lemma~4.6]{luo1999stability}).  It can also be seen from~\eqref{TransEqBeta} that
\begin{equation}
\delta_n = \frac{\pi (2n-1)}{2} + o\left(\frac{1}{n}\right)\quad \text{as}\;\;n\to\infty.
\label{beta_asympt}
\end{equation}

By substituting~\eqref{modexp} into~\eqref{PDE}--\eqref{BC} and performing the change of variables
$$
 \xi_n(t) = \omega_n q_n(t),\; \eta_n(t) = \dot q_n(t),
 $$
we obtain the system~\eqref{cs_coord} with
 \begin{equation}
 \begin{aligned}
 & \omega_n = \frac{c \delta_n^2}{l^2}=\frac{c\pi^2(2n-1)^2}{4l^2}+o(1),\\
 & b_n = 2 l k_{n} {\delta_n}^{-1}\left(l
{\delta_n}^{-1}-d \gamma_n\right) =O(1/n)>0\quad\text{as}\;\;n\to \infty.
\end{aligned}
\label{omega_beta}
 \end{equation}

The above representations imply that $\omega_n$ and $b_n$ satisfy Assumptions~\eqref{A1}--\eqref{A3}
(Assumption~\eqref{A3} follows from~\cite[Lemma~4.3]{zuyev2015partial}).

Exploiting~\eqref{omega_beta}, we conclude that the sequence $\{b_n \omega_n^\alpha\}$ is asymptotically equivalent to $\{n^{2\alpha-1}\}$ as $n\to \infty$. {Therefore, condition~\eqref{A6} holds for any $\alpha\geq \frac{1}{2}$.
In view of Remark~\ref{rem_paramasymptotics}, to satisfy~\eqref{A5}, we select $\rho$ from inequality~\eqref{alpha_rho_p} with $p=2$.
Thus, any $\rho\in (1,\frac32)$ satisfies~\eqref{A5}.}
{\em By Theorem~\ref{thm_turnpike_coordinates}, the solutions of Problems~\ref{problem_stat} and~\ref{problem_dyn} satisfy the turnpike property~\eqref{eq:main_turnpike} with {any $\beta=\rho-1\in (0,\frac{1}{2})$}, provided that the parameters of control system~\eqref{cs_coord} correspond to the rotating beam model (case~\eqref{omega_beta}).}

\section{Conclusion}
The main contribution of this paper is the establishment of a previously unreported polynomial turnpike phenomenon in linear-quadratic optimal control problems for a class of oscillating infinite-dimensional systems.

Compared with the classical exponential turnpike theory for uniformly exponentially stabilizable systems, the key novelty here is that the closed-loop spectral gap collapses at high frequencies, which prevents uniform exponential estimates in the natural energy space.
The analysis therefore relies on a polynomial dichotomy and on weighted norms: the turnpike estimate is formulated in the weak topology $V_{0,-\beta-1}$, which compensates the mode-dependent loss intrinsic to bounds of the form $e^{-\vert b_k\vert t}\le C_\beta\vert b_k\vert^{-\beta}(t+1)^{-\beta}$.
From a methodological viewpoint, the proof combines a Riesz basis construction for the Hamiltonian generator of the variational system with uniform wedge estimates for the associated hyperbolic functions and a weighted Bari-type argument.

Note that the pointwise control discrepancy term $|u(t)-\hat u|$ does not appear on the left-hand side of the main inequality~\eqref{eq:main_turnpike}.
Since the optimal control $u(t)$ is well-defined by formula~\eqref{u_extremal} under the proven property $\Lambda(t)\in H$ for the costate, we have $|u(t)-\hat u|\leq \Vert b\Vert_{\ell^2}\,\Vert\mu(t)-\hat\mu\Vert_{\ell^2}$.
Thus, any \emph{strong} turnpike estimate controlling $\Vert\mu(t)-\hat\mu\Vert_{\ell^2}$ would imply the same decay rate for $|u(t)-\hat u|$.

A natural next step is to sharpen the topology on the left-hand side.
A pointwise polynomial turnpike estimate in the strong energy space $H$ would immediately provide decay information for the control discrepancy through the feedback relation, but such an upgrade seems to require additional endpoint regularity or more restrictive structural assumptions.
Another natural question is the sharpness of the decay exponent and of the chosen weighted topology.
It would be interesting to relate the best possible polynomial rate and the weakest meaningful topology to resolvent growth conditions in the spirit of polynomial stability theory, see, e.g.,~\cite{batty2008nonuniform,borichev2010optimal,batty2016fine}.

The present contribution is restricted to a free-end optimal control problem.
The consideration of a final-point constraint $x(T)=x^1$ is associated with serious additional challenges in the regularity analysis of the costate.
In the present weak topology $V_{0,-\beta-1}$, a pointwise turnpike estimate would require additional endpoint regularity and more delicate spectral arguments.
We therefore leave the fixed-endpoint case as an open problem for future research.

Beyond the fixed-endpoint issue, several further directions seem natural:
\begin{itemize}
\item extend the analysis to multi-input or boundary controlled hyperbolic systems,
where the spectral structure may not
{satisfy gap conditions of the form~\eqref{A2},~\eqref{A3}, or may include multiplicities
(e.g., in the rotating Timoshenko beam model~\cite{zuyev2007stabilization},~\cite[Theorem~5.1]{zuyev2015partial} or the Kirchhoff plate~\cite[Chap.~6]{zuyev2015partial})};
\item consider semilinear perturbations, for which only local or approximate turnpike properties may hold;
\item investigate turnpike phenomena around non-stationary sets (e.g., periodic optimal regimes), when steady states are not optimal.
\end{itemize}

\section*{Acknowledgment}

Alexander Zuyev gratefully acknowledges the funding by the
European Regional Development Fund (ERDF) within the programme Research
and Innovation — Grant Number ZS/2023/12/182138.

\small
\bibliographystyle{abbrv}
\bibliography{bib_turnpike}

\end{document}